\newcommand{\ud}[0]{\,\mathrm{d}}
\newcommand{\abs}[1]{|#1|}
\newcommand{\Babs}[1]{\Big|#1\Big|}
\newcommand{\Norm}[2]{\|#1\|_{#2}}
\newcommand{\BNorm}[2]{\Big\|#1\Big\|_{#2}}
\newcommand{\pair}[2]{\langle #1,#2 \rangle}
\newcommand{\Bpair}[2]{\Big\langle #1,#2 \Big\rangle}
\newcommand{\ave}[1]{\langle #1\rangle}
\newcommand{\cave}[1]{\langle\!\langle #1\rangle\!\rangle}
\newcommand{\sset}[1]{\langle\!\langle #1\rangle\!\rangle}
\newcommand{\lspan}[0]{\operatorname{span}}
\newcommand{\bddlin}[0]{\mathscr{L}}
\newcommand{\BMO}[0]{\operatorname{BMO}}
\newcommand{\supp}[0]{\operatorname{supp}}
\newcommand{\loc}[0]{\operatorname{loc}}
\newcommand{\R}{\mathbb{R}}
\newcommand{\N}{\mathbb{N}}
\newcommand{\Z}{\mathbb{Z}}
\newcommand{\eps}[0]{\varepsilon}
\newcommand{\avL}[0]{\textit{\L}}
\swapnumbers \numberwithin{equation}{section}
\theoremstyle{plain}
\newtheorem{theorem}[equation]{Theorem}
\newtheorem{proposition}[equation]{Proposition}
\newtheorem{corollary}[equation]{Corollary}
\newtheorem{lemma}[equation]{Lemma}
\theoremstyle{definition}
\newtheorem{definition}[equation]{Definition}
\theoremstyle{remark}
\newtheorem{remark}[equation]{Remark}
\newtheorem{example}[equation]{Example}
\begin{document}

\title{Some remarks on convex body domination}

\author[T.~P.\ Hyt\"onen]{Tuomas P.\ Hyt\"onen}
\address{Department of Mathematics and Statistics, P.O.B.~68 (Pietari Kalmin katu~5), FI-00014 University of Helsinki, Finland}
\email{tuomas.hytonen@helsinki.fi}

\date{\today}

\thanks{The author is supported the Academy of Finland via the Finnish Centre of Excellence in Randomness and Structures ``FiRST'' (grant no.\ 346314).}

\keywords{}
\subjclass[2010]{42B20, 46E40}


\maketitle

\begin{center}
Dedicated, with admiration, to the Ukrainian people.
\end{center}

\begin{abstract}
Convex body domination is an important elaboration of the technique of sparse domination that has seen significant development and applications over the past ten years. In this paper, we present an abstract framework for convex body domination, which also applies to Banach space -valued functions, and yields matrix-weighted norm inequalities in this setting. We explore applications to ``generalised commutators'', obtaining new examples of bounded operators among linear combinations of compositions of the form $a_iTb_i$, where $a_i,b_i$ are pointwise multipliers and $T$ is a singular integral operator.
\end{abstract}

\section{Introduction}

The technique of sparse domination was developed to provide a simpler approach, achieved by Lerner \cite{Lerner:simple}, to the ``$A_2$ conjecture'' on sharp weighted norm inequalities for Calder\'on--Zygmund operators, which was first proved with a different machinery by the author \cite{Hytonen:A2}. However, beyond this original aim, sparse domination immediately led to significant further consequences and has by now been applied to a variety of new questions, of which \cite{BHRT,BRS,BFP,CCDO:17,CKL}  is only a  sample. The method consists of two main steps that are largely independent of each other and essentially decouple the operator from the space or norm in which it should be estimated:
\begin{enumerate}
  \item\label{it:domOp} Dominating an operator of interest by a suitable sparse operator/form.
  \item\label{it:domSp} Estimating the sparse form with respect to relevant norms of interest.
\end{enumerate}

While sparse domination very efficiently captures the local size of an object under consideration, and this is precisely what is needed in many applications, it loses information about directions, which is sometimes relevant when dealing with vector-valued functions, and especially so, matrix-valued weights are involved. To extend the method to such questions, Nazarov et al. \cite{NPTV:convex} developed the so-called convex body domination, where the numerical averages featuring in sparse domination are replaced by convex subsets of $\R^n$, thus containing information about different behaviour in different directions. Since its introduction in the context of Calder\'on--Zygmund operators and matrix $A_2$ weights by \cite{NPTV:convex} (see also \cite{CDPO:unif} for another approach but based on the same key idea), convex body domination has been applied to matrix $A_p$-weight and two-weight bounds by Cruz-Uribe et al.~\cite{CUIM:18}, and extended to commutators of Calder\'on--Zygmund operators by Isralowitz et al.~\cite{IPR:21,IPT:commu} and rough singular integral operators by Di Plinio et al.~\cite{DPHL} and Muller and Rivera-R\'{\i}os \cite{MRR:22}. In a recent breakthrough, Bownik and Cruz-Uribe \cite{BCU} extended the Rubio de Francia algorithm, and its key application to weighted extrapolation, to matrix-valued weights, by further development of the convex body philosophy.

The aim of this paper is to further explore this technique, providing extensions, new applications and---hopefully---some additional insight into the abstract underlying mechanisms. We begin by developing a somewhat general framework, but our claims for originality in this regard are relatively mild, as most of the ideas are at least implicit in the previous works in the existing literature. A certain justification for this framework comes from the observation that it applies almost verbatim to the case of Banach space -valued functions. To be precise, given a Banach space $E$, we consider functions taking values in $E^n$, and develop a version of convex body domination applicable to weighted norm inequalities involving matrix weights $W:\R^d\to\R^{n\times n}$, acting on $E^n$ in the natural way. That is, we make no attempt towards a fully operator-valued theory of weighted norm inequalities in infinite dimensions, yet the results that we obtain are still new even in this more modest generality. In particular, if $E$ is a Banach space with the UMD property, the classical Hilbert transform extends boundedly to the matrix-weighted space $L^2(W;E^n)$ of $E^n$-valued functions; see Corollary \ref{cor:L2WE} for the result, and Section \ref{sec:L2WE} for the relevant definitions and background. A key to this extension is the observation that the convex bodies arising from our framework are still $\R^n$-valued in this generality---and not, for instance, $E^n$-valued, as one might have (and this author certainly had) initially expected. Thus the powerful Euclidean machinery, most notably the John ellipsoid theorem, is still available in this setting.

As for new applications of the theory, we build on a recent observation from Isralowitz et al. \cite{IPR:21,IPT:commu} that convex body domination of an operator $T$ bootstraps to a domination of its commutators $[b,T]=bT-Tb$ with pointwise multipliers. As we will explore in Section \ref{sec:commu}, this phenomenon is far more general, and can be used to estimate any operators of the form
\begin{equation*}
  f\mapsto \sum_{i=1}^n a_i T(b_i f),
\end{equation*}
where an operator $T$ satisfying convex body domination is pre- and post-composed with pointwise multipliers $a_i,b_i$. From this general principle, we can in particular recover and sharpen a recent sufficient condition \cite{HLO:20} for the boundedness of iterated mixed commutators $[b^1,[b^2,T]]$ in terms of joint conditions on the pair of functions $(b^1,b^2)$, but also obtain new examples.

In contrast to the development of the abstract framework in the first part of the paper, we have not strived for the greatest generality in terms of the applications in the later sections. In many cases, it will be clear to an experienced reader that several variants and extensions could be obtained, and some of them will most likely be pursued in forthcoming works, by this author and others. Besides the concrete results contained in this paper, our aim is to hint at the many rich directions for the further development of the theory.

\section{Norms and convex bodies}

Let $X$ be a real normed space. We denote by
\begin{equation*}
  \bar B_X:=\{x\in X:\Norm{x}{X}\leq 1\}
\end{equation*}
its closed unit ball, and by $X^*$ the normed dual, which is a Banach space.
For $x^*\in X^*$, we define, as usual,
\begin{equation*}
  \Norm{x^*}{X^*}:=\sup\{\abs{\pair{x}{x^*}}:x\in\bar B_X\}.
\end{equation*}
As a consequence of the Hahn--Banach theorem, we have
\begin{equation}\label{eq:HB}
  \Norm{x}{X}=\sup\{\abs{\pair{x}{x^*}}:x^*\in\bar B_{X^*}\}
  =\max\{\pair{x}{x^*}:x^*\in\bar B_{X^*}\};
\end{equation}
in particular, the supremum is reached as a maximum, and we have $\Norm{x}{X}=\pair{x}{x^*}$ for some $x^*\in\bar B_{X^*}$.

For $\vec x=(x_i)_{i=1}^n\in X^n$ and $x^*\in X^*$, we define the $\R^n$-valued pairing $\pair{\vec x}{x^*}:=(\pair{x_i}{x^*})_{i=1}^n\in\R^n$ and the set-valued ``norm''
\begin{equation*}
  \sset{\vec x}_X:=\{\pair{\vec x}{x^*}:x^*\in \bar B_{X^*}\}\subset\R^n.
\end{equation*}

\begin{remark}
The notation is adapted from Nazarov et al.~\cite{NPTV:convex}, who introduced the version with $X=\avL^1(Q)$, the space $L^1(Q)$ with the normalised norm $\frac{1}{\abs{Q}}\Norm{\ }{1})$. The extension to $X=\avL^p(Q)$ (i.e., $L^p(Q)$ with the normalised norm $\frac{1}{\abs{Q}^{1/p}}\Norm{\ }{p})$ is due to Di Plinio et al.~\cite{DPHL}. Although our main applications will be concerned with spaces of functions (living on a cube $Q$), we find it illuminating to develop the basics of the theory on a completely abstract level. Among other things, this point of view will make it clear that there will be essentially no difference in treating a space $X=L^p(Q;E)$ of $E$-valued functions for an arbitrary Banach space $E$; for $\vec{f}\in X^n$, the corresponding $\cave{\vec f}_{X}$ will still be subsets of $\R^n$ and not, say, of $E^n$. This will allow us to make effortless use of the powerful John ellipsoid theorem from Euclidean geometry, even when working with functions taking values in an infinite-dimensional Banach space! In other applications, a choice like $X=L\log L(Q)$ might also be relevant.
\end{remark}

For $\vec a\in\R^n$ and $\vec x\in X^n$, we define the $X$-valued dot product
\begin{equation*}
 \vec a\cdot\vec x:=\vec x\cdot\vec a:=\sum_{i=1}^n a_i x_i. 
\end{equation*}
We observe the easy identities
\begin{equation*}
  \vec a\cdot\pair{\vec x}{x^*}=\pair{\vec a\cdot\vec x}{x^*},\quad\forall\vec a\in\R^n,\ \vec x\in X^n,\ x^*\in X^*,
\end{equation*}
and
\begin{equation*}
  \lspan_X(\vec{x}):=\lspan\{x_i\}_{i=1}^n=\{\vec a\cdot\vec x:a\in\R^n\}\subset X.
\end{equation*}

\begin{lemma}
For each $\vec x\in X^n$, the set $\sset{\vec x}_X\subset\R^n$ is convex, compact, and symmetric about the origin.
\end{lemma}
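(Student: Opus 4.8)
The plan is to exhibit $\sset{\vec x}_X$ as the image of the dual unit ball $\bar B_{X^*}$ under the linear map $\Phi : X^* \to \R^n$, $\Phi(x^*) := \pair{\vec x}{x^*} = (\pair{x_i}{x^*})_{i=1}^n$, and then read off all three properties from this description. Convexity and symmetry are immediate from the fact that $\Phi$ is linear and $\bar B_{X^*}$ is convex and symmetric about the origin: for $x^*, y^* \in \bar B_{X^*}$ and $t \in [0,1]$ one has $t\Phi(x^*) + (1-t)\Phi(y^*) = \Phi(tx^* + (1-t)y^*)$ with $tx^* + (1-t)y^* \in \bar B_{X^*}$, and $-\Phi(x^*) = \Phi(-x^*)$ with $-x^* \in \bar B_{X^*}$.

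For compactness, I would first note boundedness: $\abs{\pair{x_i}{x^*}} \leq \Norm{x_i}{X}$ for each $i$ when $x^* \in \bar B_{X^*}$, so $\sset{\vec x}_X$ is contained in a fixed box in $\R^n$. Closedness is the one point requiring a little care, since $X^*$ may be infinite-dimensional and $\bar B_{X^*}$ is typically not norm-compact. The clean route is to use the weak-$*$ topology: $\bar B_{X^*}$ is weak-$*$ compact by the Banach--Alaoglu theorem, the map $\Phi$ is weak-$*$-to-norm continuous (each coordinate $x^* \mapsto \pair{x_i}{x^*}$ is weak-$*$ continuous by definition of the weak-$*$ topology), and the continuous image of a compact set is compact; hence $\sset{\vec x}_X = \Phi(\bar B_{X^*})$ is a compact subset of $\R^n$, in particular closed. (Alternatively, one can argue sequentially: since only the $n$ fixed functionals $\langle x_i, \cdot \rangle$ enter, a diagonal/subsequence argument on the values $\pair{\vec x}{x^*_k}$ produces a limit point in $\R^n$, whose realizability as $\pair{\vec x}{x^*}$ for some $x^* \in \bar B_{X^*}$ again needs Alaoglu — so the weak-$*$ compactness argument is really the essential ingredient either way.)

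The only genuine obstacle is this closedness issue in infinite dimensions, and it is dispatched entirely by Banach--Alaoglu; everything else is a one-line consequence of linearity of $\Phi$. It is worth emphasizing that this already uses the remark's key observation: although $X$ and $X^*$ may be infinite-dimensional (e.g.\ $X = L^p(Q;E)$), the set $\sset{\vec x}_X$ lives in the finite-dimensional space $\R^n$, which is what makes "compact" a meaningful and useful conclusion here.
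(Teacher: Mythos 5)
Your proof is correct, and the convexity/symmetry/boundedness part coincides with the paper's (both read these off directly from the corresponding properties of $\bar B_{X^*}$). For closedness, however, you take a genuinely different route. You invoke the Banach--Alaoglu theorem: $\bar B_{X^*}$ is weak-$*$ compact, the map $x^*\mapsto\pair{\vec x}{x^*}$ is weak-$*$-to-norm continuous into $\R^n$, and the continuous image of a compact set is compact. The paper instead argues sequentially and \emph{avoids} Alaoglu altogether: given $\pair{\vec x}{x_k^*}\to\vec e$, the limit satisfies $\abs{\vec a\cdot\vec e}\leq\Norm{\vec a\cdot\vec x}{X}$ for all $\vec a\in\R^n$, so $\vec a\cdot\vec x\mapsto\vec a\cdot\vec e$ is a well-defined norm-one functional on the finite-dimensional subspace $\lspan\{x_i\}_{i=1}^n$, which extends by Hahn--Banach to some $x^*\in\bar B_{X^*}$ realizing $\vec e=\pair{\vec x}{x^*}$. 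This shows that your parenthetical claim --- that realizing the limit point ``again needs Alaoglu, so weak-$*$ compactness is the essential ingredient either way'' --- is not accurate: only the Hahn--Banach extension theorem (already quoted in the paper's preliminaries) is needed, precisely because everything factors through the finite-dimensional span of the $x_i$. Your approach buys a shorter, more standard argument at the cost of a heavier tool; the paper's buys self-containedness and makes explicit the finite-dimensionality that drives the whole theory. Both are valid proofs.
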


\begin{proof}
Symmetry, convexity and boundedness are immediate from the fact that $\bar B_{X^*}$ has these properties. For compactness in $\R^n$, it remains to show closedness, so suppose that $\pair{\vec x}{x_k^*}\to\vec e\in\R^n$ as $k\to\infty$, where each $x_k^*\in\bar B_{X^*}$; we need to show that $\vec{e}\in\sset{x}_X$. For each $\vec a\in\R^n$, it follows that
\begin{equation*}
  \abs{\vec a\cdot\vec e}=\lim_{k\to\infty}\abs{\vec{a}\cdot\pair{\vec x}{x_k^*}}
  =\lim_{k\to\infty}\abs{\pair{\vec{a}\cdot\vec x}{x_k^*}}
  \leq\Norm{\vec{a}\cdot\vec x}{X}.
\end{equation*}
This in turn implies that
\begin{equation*}
  \Lambda(\vec a\cdot\vec x):=\vec a\cdot\vec e,\quad \forall\vec a\cdot\vec x\in\lspan_X(\vec{x}),
\end{equation*}
gives a well-defined linear functional of norm $1$ on the subspace $\lspan_X(\vec{x})\subset X$. By the Hahn--Banach theorem, $\Lambda$ is the restriction of some $x^*\in\bar B_{X^*}$. Hence
\begin{equation*}
  \vec{a}\cdot\vec{e}=\Lambda(\vec a\cdot\vec x)=\pair{\vec{a}\cdot\vec{x}}{x^*}=\vec{a}\cdot\pair{\vec{x}}{x^*}\quad\forall\vec a\in\R^n,
\end{equation*}
and thus $\lim_{k\to\infty}\pair{\vec x}{x_k}=\vec{e}=\pair{\vec{x}}{x^*}\in\sset{\vec x}_X$, as we wanted to show.
\end{proof}

For $A,B\subset\R^n$, we define the Minkowski dot product
\begin{equation*}
  A\cdot B:=\{\vec a\cdot\vec b:\vec a\in A,\vec b\in B\}\subset\R.
\end{equation*}
If $A,B\subset\R^n$ are convex, compact and symmetric, so is $A\cdot B\subset\R$. On $\R$, such sets are precisely intervals of the form $[-c,c]$. Hence we can, and sometimes will, identify $A\cdot B=[-c,c]\subset\R$ with its right end-point $c\in[0,\infty)$.
In particular, for $\vec x\in X^n$ and $\vec y\in Y^n$, we will use this identification when dealing with
\begin{equation*}
\begin{split}
  \sset{\vec x}_X \cdot\sset{\vec y}_{Y}
  &=\{\pair{\vec x}{x^*}\cdot\pair{\vec y}{y^*}:x^*\in\bar B_{X^*},y^*\in\bar B_{Y^*}\} \\
  &=\Big\{\sum_{i=1}^n\pair{x_i}{x^*}\pair{y_i}{y^*}:x^*\in\bar B_{X^*},y^*\in\bar B_{Y^*}\Big\}.
\end{split}
\end{equation*}

\section{Bi-linear forms}

Let $X,Y$ be real normed spaces, and suppose that we have a bilinear from $t:X\times Y\to\R$. We define its extension acting on pairs of vectors $(\vec x,\vec y)\in X^n\times Y^n$ as follows. If $\vec e\in\R^n$ and $x\in X^n$, we have $\vec x\cdot\vec e\in F$ by our previous convention about the $X$-valued dot product. If $(e_i)_{i=1}^n$ is a fixed orthonormal basis of $\R^n$, we then define
\begin{equation*}
  t(\vec f,\vec g):=\sum_{i=1}^n t(\vec f\cdot\vec e_i,\vec g\cdot\vec e_i),
\end{equation*}
For $x\in X$, $y\in Y$, and $\vec e,\vec u\in\R^n$, it follows that
\begin{equation*}
  t(x \vec e,y \vec u):=\sum_{i=1}^n t(x \vec e\cdot\vec e_i,y \vec u\cdot\vec e_i)
  =t(x,y) \sum_{i=1}^n (\vec e\cdot\vec e_i)(\vec u\cdot\vec e_i)
  =t(x,y)\vec e\cdot\vec u.
\end{equation*}
If $(\vec u_i)_{i=1}^n$ is another orthonormal basis, then
\begin{equation*}
\begin{split}
  \sum_{i=1}^n t(\vec x\cdot\vec  u_i,\vec y\cdot\vec  u_i)
  &=\sum_{i,j,k=1}^n t(\vec x\cdot\vec  e_j,\vec y\cdot\vec  e_k)(\vec e_j\cdot \vec u_i)(\vec e_k\cdot\vec  u_i) \\
  &=\sum_{j,k=1}^n t(\vec x\cdot \vec e_j,\vec y\cdot \vec e_k)(\vec e_j\cdot \vec e_k) 
  =\sum_{j=1}^n t(\vec x\cdot \vec e_j,\vec y\cdot \vec e_j)=:t(\vec x,\vec y),
\end{split}
\end{equation*}
so the definition of $t(\vec f,\vec g)$ is independent of the chosen orthonormal basis.

If $A\in\R^{n\times n}$ is a linear transformation of $\R^n$, acting in a natural way on $F^n$, then
\begin{equation*}
\begin{split}
  t(A\vec x,\vec y)
  &=\sum_{i=1}^n t(A\vec f\cdot\vec e_i,\vec g\cdot\vec e_i)
  =\sum_{i=1}^n t(\vec f\cdot A^t\vec e_i,\vec g\cdot\vec e_i) \\
  &=\sum_{i,j=1}^n t(\vec f\cdot \vec e_j,\vec g\cdot\vec e_i)(\vec e_j\cdot A^t\vec e_i) \\
  &=\sum_{i,j=1}^n t(\vec f\cdot \vec e_j,\vec g\cdot\vec e_i)(A\vec e_j\cdot \vec e_i) 
  =\sum_{j=1}^n t(\vec f\cdot \vec e_j,\vec g\cdot A\vec e_j)
  =t(\vec f,A^t\vec g).
\end{split}
\end{equation*}

\section{From norm bounds to convex body bounds}

The idea of the following lemma lies behind many of the existing convex body domination results. To isolate the key point, we state it here in an operator-free version, involving functions and therir norms only.

\begin{lemma}\label{lem:fi-vs-convex}
Let $X,Y$ be normed spaces, and $\vec f\in X^n,\vec g\in Y^n$.

Let $\mathcal E_K$ be the John ellipsoid of $K:=\cave{\vec f}_{X}$ such that
\begin{equation*}
  \mathcal E_K\subset K\subset\sqrt{n}\mathcal E_K,
\end{equation*}
and suppose that $\mathcal E_K$ is non-degenerate (i.e., of full dimension).
Let $R_K$ be a linear transformation such that $R_K\mathcal E_K=\bar B_{\R^n}$, the closed unit ball of $\R^n$, and let $(\vec e_i)_{i=1}^n$ be an orthonormal basis of $\R^n$. If
\begin{equation*}
  f_i:=R_K\vec f\cdot \vec e_i,\quad g_i:=R_K^{-t}\vec g\cdot e_i,\qquad i=1,\ldots,n,
\end{equation*}
then
\begin{equation*}
  \sum_{i=1}^n\Norm{f_i}{X}\Norm{g_i}{Y}\leq n^{3/2}\cave{\vec f}_{X}\cdot\cave{\vec g}_{Y}.
\end{equation*}
\end{lemma}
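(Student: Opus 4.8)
The plan is to compute both sides in the coordinate system adapted to the John ellipsoid and exploit that $R_K K$ is sandwiched between $\bar B_{\R^n}$ and $\sqrt n\,\bar B_{\R^n}$. First I would record the effect of the linear maps on the set-valued norms. Since $\cave{R_K\vec f}_X = \{\pair{R_K\vec f}{x^*}:x^*\in\bar B_{X^*}\} = \{R_K\pair{\vec f}{x^*}:x^*\in\bar B_{X^*}\} = R_K\cave{\vec f}_X = R_K K$, the John ellipsoid condition gives
\begin{equation*}
  \bar B_{\R^n} = R_K\mathcal E_K \subset R_K K = \cave{R_K\vec f}_X \subset \sqrt n\,R_K\mathcal E_K = \sqrt n\,\bar B_{\R^n}.
\end{equation*}
Applying \eqref{eq:HB} with $X$ in place of $X$ componentwise, the inclusion $\cave{R_K\vec f}_X\subset\sqrt n\,\bar B_{\R^n}$ says exactly that $|\pair{R_K\vec f}{x^*}|_{\R^n}\le\sqrt n$ for all $x^*\in\bar B_{X^*}$, i.e. $\Norm{\vec a\cdot(R_K\vec f)}{X}\le\sqrt n|\vec a|$ for all $\vec a\in\R^n$; in particular each $\Norm{f_i}{X}=\Norm{\vec e_i\cdot(R_K\vec f)}{X}\le\sqrt n$. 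This is the only place the outer inclusion of the John ellipsoid is used.

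Next I would bound $\sum_i\Norm{f_i}{X}\Norm{g_i}{Y}$ by $\sqrt n\sum_i\Norm{g_i}{Y}$ using the previous step, and then estimate $\sum_i\Norm{g_i}{Y}$. For each $i$ pick, by \eqref{eq:HB}, a functional $y_i^*\in\bar B_{Y^*}$ with $\Norm{g_i}{Y}=\pair{g_i}{y_i^*}=\vec e_i\cdot\pair{R_K^{-t}\vec g}{y_i^*}$. The difficulty is that these $y_i^*$ depend on $i$, so $\sum_i\Norm{g_i}{Y}=\sum_i\vec e_i\cdot\pair{R_K^{-t}\vec g}{y_i^*}$ is not literally of the form $\vec u\cdot\pair{R_K^{-t}\vec g}{y^*}$. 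I would handle this by writing $\sum_i\Norm{g_i}{Y} = \sum_i\pair{g_i}{y_i^*}$ and bounding each term by $\Norm{g_i}{Y}\le|\pair{R_K^{-t}\vec g}{y_i^*}|_{\R^n}$, which lies in $|R_K^{-t}\cave{\vec g}_Y|_{\R^n}$; but a cleaner route is the Cauchy–Schwarz-type bound $\sum_{i=1}^n\Norm{g_i}{Y}\le\sqrt n\big(\sum_{i=1}^n\Norm{g_i}{Y}^2\big)^{1/2}$ and then to recognize that for any fixed $y^*$ the vector $\pair{R_K^{-t}\vec g}{y^*}$ has $\sum_i|\vec e_i\cdot\pair{R_K^{-t}\vec g}{y^*}|^2=|\pair{R_K^{-t}\vec g}{y^*}|_{\R^n}^2$. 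The main obstacle is precisely reconciling the $i$-dependent choice of $y_i^*$ with the definition of the Minkowski dot product $\cave{\vec f}_X\cdot\cave{\vec g}_Y$, which involves a single pair $(x^*,y^*)$; I expect this to force the extra factor of $\sqrt n$ (on top of the $\sqrt n$ from the John ellipsoid and the $\sqrt n$ absorbed into $n^{3/2}$).

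Finally I would assemble the pieces. Using $\Norm{f_i}{X}\le\sqrt n$ for all $i$, then Cauchy–Schwarz in $i$, then the identity $\sum_i\Norm{g_i}{Y}^2\le\sum_i\pair{g_i}{y^*}^2 + (\text{correction})$ — or, more carefully, choosing one $y^*$ realizing $\max_i$ and using $R_K^{-t}$ is a bijection on $\R^n$ so that $R_K^{-1}\vec e_i$ has norm $1$ — the product $\sum_i\Norm{f_i}{X}\Norm{g_i}{Y}$ is bounded by $\sqrt n\cdot\sqrt n\cdot\sup_{x^*,y^*}\big(\sum_i\pair{x_i}{x^*}\pair{y_i}{y^*}\big)$ up to one more $\sqrt n$, where I use that $\sum_i\pair{R_K\vec f}{x^*}_i\pair{R_K^{-t}\vec g}{y^*}_i = \pair{R_K\vec f}{x^*}\cdot\pair{R_K^{-t}\vec g}{y^*} = (R_K^{-1}R_K)\pair{\vec f}{x^*}\cdot\pair{\vec g}{y^*} = \pair{\vec f}{x^*}\cdot\pair{\vec g}{y^*}$ — the transpose pairing of $R_K$ and $R_K^{-t}$ cancels exactly. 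That cancellation, $R_K\vec a\cdot R_K^{-t}\vec b=\vec a\cdot\vec b$, is the structural reason the two sides are comparable at all, and identifying $\cave{\vec f}_X\cdot\cave{\vec g}_Y$ with the right end-point $c$ of the interval $[-c,c]$ turns the final supremum bound into the asserted inequality $\sum_i\Norm{f_i}{X}\Norm{g_i}{Y}\le n^{3/2}\,\cave{\vec f}_X\cdot\cave{\vec g}_Y$.
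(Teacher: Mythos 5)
Your first step is exactly the paper's: the outer inclusion $K\subset\sqrt n\,\mathcal E_K$ gives $R_K\cave{\vec f}_X\subset\sqrt n\,\bar B_{\R^n}$ and hence $\Norm{f_i}{X}\le\sqrt n$ for each $i$. The gap is in the second half. You correctly identify the obstacle (the optimal functionals $y_i^*$ for the $g_i$ depend on $i$, while the Minkowski product $\cave{\vec f}_X\cdot\cave{\vec g}_Y$ involves a single pair $(x^*,y^*)$), but neither of your proposed resolutions closes it. The Cauchy--Schwarz route stalls because $\sum_i\Norm{g_i}{Y}^2$ still carries $n$ different functionals $y_i^*$ and cannot be collapsed into $|\pair{R_K^{-t}\vec g}{y^*}|_{\R^n}^2$ for a single $y^*$; and the alternative hinges on the claim that ``$R_K^{-1}\vec e_i$ has norm $1$,'' which is false in general since $R_K$ is not an isometry. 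More tellingly, you assert that the outer John inclusion ``is the only place the John ellipsoid is used,'' whereas the \emph{inner} inclusion $\mathcal E_K\subset K$ is indispensable and never appears in your argument.

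Here is the missing step, which is how the paper closes the proof. Set $M:=\max\{|\vec y|:\vec y\in R_K^{-t}\cave{\vec g}_Y\}$. Then trivially $\Norm{g_i}{Y}\le M$ for every $i$ (no Cauchy--Schwarz is needed), so $\sum_i\Norm{f_i}{X}\Norm{g_i}{Y}\le n\cdot\sqrt n\,M=n^{3/2}M$. To see that $M\le\cave{\vec f}_X\cdot\cave{\vec g}_Y$, combine the inner inclusion with your cancellation identity $R_K\vec a\cdot R_K^{-t}\vec b=\vec a\cdot\vec b$:
\begin{equation*}
  \cave{\vec f}_X\cdot\cave{\vec g}_Y\supset\mathcal E_K\cdot\cave{\vec g}_Y
  =R_K\mathcal E_K\cdot R_K^{-t}\cave{\vec g}_Y
  =\bar B_{\R^n}\cdot R_K^{-t}\cave{\vec g}_Y=[-M,M],
\end{equation*}
since taking the Minkowski dot product of a compact symmetric set with the full Euclidean unit ball recovers its maximal Euclidean norm. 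The point is that the inner inclusion guarantees that the unit vector realizing $M$ is of the form $R_K\pair{\vec f}{x^*}$ for some $x^*\in\bar B_{X^*}$; your remark about $R_K^{-1}\vec e_i$ should be replaced by the statement that $R_K^{-1}\vec u\in\mathcal E_K\subset K=\cave{\vec f}_X$ for every $\vec u\in\bar B_{\R^n}$. With this correction your outline coincides with the paper's proof.
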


\begin{proof}
If $\phi\in\bar B_{X^*}$, then
\begin{equation*}
\begin{split}
  \pair{\phi}{R_K\vec f\cdot\vec e_i}
  &=R_K\pair{\phi}{\vec f}\cdot\vec e_i \\
  &\in R_K\cave{\vec f}_{X}\cdot\vec e_i
  \subset R_K \sqrt{n}\mathcal E_K\cdot\vec e_i
  =\sqrt{n}\bar B_{\R^n}\cdot\vec e_i=\sqrt{n}[-1,1],
\end{split}
\end{equation*}
and hence
\begin{equation*}
  \Norm{f_i}{X}=\Norm{R_K\vec f\cdot\vec e_i}{X}\leq \sqrt{n}.
\end{equation*}

If $\psi\in\bar B_{Y^*}$, then
\begin{equation*}
\begin{split}
  \pair{\psi}{R_K^{-t}\vec g\cdot\vec e_i}
  &=R_K^{-t}\pair{\psi}{\vec g}\cdot\vec e_i \\
  &\in R_K^{-t} \cave{\vec g}_{Y}\cdot\vec e_i
  \subset [-M,M],\quad M:=\max\{\abs{\vec y}:\vec y\in R_K^{-t} \cave{\vec g}_{Y}\}.
\end{split}
\end{equation*}
It follows that $\abs{\pair{\psi}{R_K^{-t}\vec g\cdot\vec e_i}}\leq M$, and hence
\begin{equation*}
  \Norm{g_i}{Y}=\Norm{R_K^{-t}\vec g\cdot\vec e_i}{Y}\leq M.
\end{equation*}

Combining the estimates, we have
\begin{equation*}
  \sum_{i=1}^n\Norm{f_i}{X}\Norm{g_i}{Y}
  \leq\sum_{i=1}^n \sqrt{n}M=n^{3/2}M.
\end{equation*}

On the other hand,
\begin{equation*}
  \cave{\vec f}_{X}\cdot\cave{\vec g}_{Y}
  \supset \mathcal E_K\cdot\cave{\vec g}_{Y} 
  =R_K\mathcal E_K\cdot R_K^{-t}\cave{\vec g}_{Y} 
  =\bar B_{\R^n}\cdot R_K^{-t}\cave{\vec g}_{Y} 
  =[-M,M],
\end{equation*}
and hence
\begin{equation*}
  M  \leq \cave{\vec f}_{X}\cdot\cave{\vec g}_{Y},
\end{equation*}
using the identification of the symmetric interval $\cave{\vec f}_{X}\cdot\cave{\vec g}_{Y}$ with its right end-point in the last step.
Substituting back, this completes the proof.
\end{proof}

The following proposition contains the basic idea of bootstrapping norm bounds to convex body bounds. Unfortunately, it is a bit too simple for most actual applications, but we include it as an illustrative toy model for the more serious result to be presented after it.

\begin{proposition}\label{prop:loc-sparse2convex-pre}
Let $X,Y$ be normed spaces with subspaces $F\subset X$ and $G\subset Y$, and let $t:F\times G\to\R$ be a bilinear form.
Consider the following conditions:
\begin{enumerate}
  \item\label{it:1scaleDomSca-pre} For all $(f,g)\in F\times G$, we have
\begin{equation*}
  \abs{t(f,g)}\leq C\Norm{f}{X}\Norm{g}{Y}.
\end{equation*}
  \item\label{it:1scaleDomVec-pre} For all $(\vec f,\vec g)\in F^n\times G^n $, we have
\begin{equation*}
  \abs{t(\vec f,\vec g)}\leq C_n\cave{\vec f}_{X}\cdot\cave{\vec g}_{Y}.
\end{equation*}
\end{enumerate}
For each $n\in\Z_+$, condition \eqref{it:1scaleDomSca} implies condition \eqref{it:1scaleDomVec} with $C_n=Cn^{3/2}$.
\end{proposition}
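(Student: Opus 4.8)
The plan is to deduce condition~\eqref{it:1scaleDomVec-pre} from condition~\eqref{it:1scaleDomSca-pre} by performing the linear change of variables that straightens the convex body $K:=\cave{\vec f}_X$ into a Euclidean ball and then invoking Lemma~\ref{lem:fi-vs-convex}. Two facts about the vector-valued extension of $t$ established above carry the argument: its independence of the chosen orthonormal basis, $t(\vec h,\vec k)=\sum_{i=1}^n t(\vec h\cdot\vec e_i,\vec k\cdot\vec e_i)$, and the adjoint relation $t(A\vec h,\vec k)=t(\vec h,A^t\vec k)$. Observe also that every pairing written below is meaningful: since $F$ and $G$ are subspaces, any linear combination of the components of $\vec f$ (resp.\ $\vec g$) again lies in $F$ (resp.\ $G$).

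Assume first that the John ellipsoid $\mathcal E_K$ of $K=\cave{\vec f}_X$ is non-degenerate, and take $R_K$, the basis $(\vec e_i)_{i=1}^n$, and the functions $f_i=R_K\vec f\cdot\vec e_i\in F$, $g_i=R_K^{-t}\vec g\cdot\vec e_i\in G$ exactly as in Lemma~\ref{lem:fi-vs-convex}. Writing $\vec f=R_K^{-1}(R_K\vec f)$ and applying the adjoint relation with $A=R_K^{-1}$, followed by basis-independence, I would compute
\begin{equation*}
  t(\vec f,\vec g)=t\bigl(R_K^{-1}(R_K\vec f),\vec g\bigr)=t\bigl(R_K\vec f,\,R_K^{-t}\vec g\bigr)=\sum_{i=1}^n t\bigl((R_K\vec f)\cdot\vec e_i,\,(R_K^{-t}\vec g)\cdot\vec e_i\bigr)=\sum_{i=1}^n t(f_i,g_i).
\end{equation*}
Estimating each summand by hypothesis~\eqref{it:1scaleDomSca-pre} and then invoking Lemma~\ref{lem:fi-vs-convex} gives
\begin{equation*}
  \abs{t(\vec f,\vec g)}\leq\sum_{i=1}^n\abs{t(f_i,g_i)}\leq C\sum_{i=1}^n\Norm{f_i}{X}\Norm{g_i}{Y}\leq Cn^{3/2}\,\cave{\vec f}_X\cdot\cave{\vec g}_Y,
\end{equation*}
which is \eqref{it:1scaleDomVec-pre} with $C_n=Cn^{3/2}$.

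The only point requiring care, and the main obstacle, is removing the non-degeneracy hypothesis. If $\mathcal E_K$ is degenerate, then $K=\cave{\vec f}_X$ spans a proper subspace $V\subsetneq\R^n$ of some dimension $m<n$, and I would choose the orthonormal basis $(\vec e_i)_{i=1}^n$ so that $\vec e_1,\dots,\vec e_m$ span $V$. For $j>m$ we then have $\pair{\vec e_j\cdot\vec f}{x^*}=\vec e_j\cdot\pair{\vec f}{x^*}=0$ for every $x^*\in\bar B_{X^*}$ because $\vec e_j\perp V$; hence $\vec e_j\cdot\vec f=0$ in $X$, and expanding $\pair{\vec f}{x^*}$ in the basis shows its last $n-m$ coordinates vanish identically. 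Consequently only the first $m$ terms of $t(\vec f,\vec g)=\sum_{i=1}^n t(\vec f\cdot\vec e_i,\vec g\cdot\vec e_i)$ survive, so $t(\vec f,\vec g)=t(\vec f',\vec g')$ where $\vec f':=(\vec f\cdot\vec e_i)_{i=1}^m\in F^m$, $\vec g':=(\vec g\cdot\vec e_i)_{i=1}^m\in G^m$, and $t(\vec f',\vec g')$ is the $m$-dimensional extension; the same coordinate computation gives $\pair{\vec f'}{x^*}\cdot\pair{\vec g'}{y^*}=\pair{\vec f}{x^*}\cdot\pair{\vec g}{y^*}$ for all $x^*\in\bar B_{X^*},y^*\in\bar B_{Y^*}$, whence $\cave{\vec f'}_X\cdot\cave{\vec g'}_Y=\cave{\vec f}_X\cdot\cave{\vec g}_Y$. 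Since $K$ is full-dimensional inside $V\cong\R^m$, the John ellipsoid of $\cave{\vec f'}_X$ is non-degenerate in $\R^m$ (the case $m=0$, i.e.\ $\vec f=0$, being trivial since then both sides vanish), so applying the non-degenerate case already proved, now in dimension $m$, I would conclude
\begin{equation*}
  \abs{t(\vec f,\vec g)}=\abs{t(\vec f',\vec g')}\leq Cm^{3/2}\,\cave{\vec f'}_X\cdot\cave{\vec g'}_Y\leq Cn^{3/2}\,\cave{\vec f}_X\cdot\cave{\vec g}_Y,
\end{equation*}
using $m\leq n$. Thus the only real work is the bookkeeping of this reduction, chiefly checking that the Minkowski dot product is preserved, which the orthogonality of the adapted basis handles; alternatively one could present the argument uniformly by passing at once to $m:=\dim\lspan_X(\vec f)$, where Lemma~\ref{lem:fi-vs-convex} applies directly.
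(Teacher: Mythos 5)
Your proof is correct and follows essentially the same route as the paper: the non-degenerate case is identical (straighten $K=\cave{\vec f}_X$ by $R_K$, decompose $t(\vec f,\vec g)=\sum_i t(f_i,g_i)$, and invoke Lemma~\ref{lem:fi-vs-convex}), and your degenerate case is the same reduction to $\lspan K$, carried out via an adapted orthonormal basis rather than via the orthogonal projection $P$ and the adjoint identity $t(P\vec f,\vec g)=t(\vec f,P\vec g)$ used in the paper. Your explicit verification that the Minkowski dot product is preserved under this reduction is a welcome detail that the paper leaves implicit in the phrase ``repeat the argument with $\R^n$ replaced by $H$.''
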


\begin{proof}
Given $\vec f$, consider the compact, convex, symmetric set 
\begin{equation*}
  K:=\cave{\vec f}_{X}, 
\end{equation*}
and denote by $\mathcal E_K$ its John ellipsoid such that
\begin{equation*}
  \mathcal E_K\subset K\subset\sqrt{n}\mathcal E_K. 
\end{equation*}

\subsubsection*{Case: $\mathcal E_K$ is non-degenerate}
Let $R_K$ be a linear transformation such that $R_K\mathcal E_K=\bar B_{\R^n}$, the closed unit ball of $\R^n$.
Let $(\vec e_i)_{i=1}^n$ be some orthonormal basis of $\R^n$.
We then write
\begin{equation}\label{eq:tvecfvecg-pre}
\begin{split}
  t(\vec f,\vec g)
  &=t(R_K^{-1}R_K\vec f,\vec g)
  =t(R_K\vec f,R_K^{-t}\vec g) \\
  &=\sum_{i=1}^n t(R_K\vec f\cdot\vec e_i,R_K^{-t}\vec g\cdot\vec e_i)
    =:\sum_{i=1}^n t(f_i,g_i),
\end{split}
\end{equation}
where $f_i$ and $g_i$ are as in Lemma \ref{lem:fi-vs-convex}.

By assumption \eqref{it:1scaleDomSca-pre} and Lemma \ref{lem:fi-vs-convex}, it follows that
\begin{equation*}
  \abs{ t(\vec f,\vec g)}
    \leq\sum_{i=1}^n \abs{t(f_i,g_i)}
    \leq \sum_{i=1}^n C\Norm{f_i}{X}\Norm{g_i}{Y}
    \leq Cn^{3/2}\cave{\vec f}_X\cdot\cave{\vec g}{Y},
\end{equation*}
and this completes the proof in the case that $\mathcal E_K$ is non-degenerate.

\subsubsection*{Case: $\mathcal E_K$ is degenerate}
Suppose then that $\mathcal E_K$ is degenerate; hence $H:=\lspan K$ is a strict subspace of $\R^n$. Let $P$ denote the orthogonal projection of $\R^n$ onto $H$. For each $x^*\in\bar B_{X^*}$, we have $\pair{\vec f}{x^*}\in K\subset H$, hence
\begin{equation*}
  \pair{\vec f}{x^*}=P\pair{\vec f}{x^*}=\pair{P\vec f}{x^*},
\end{equation*}
and thus $\vec f=P\vec f$. It follows that 
\begin{equation}\label{eq:tfg-vs-tfPg}
  t(\vec f,\vec g)=  t(P\vec f,\vec g)=  t(\vec f,P^t\vec g)=  t(\vec f,P\vec g),
\end{equation}
and similarly
\begin{equation}\label{eq:caveg-vs-cavePg}
\begin{split}
  \cave{\vec f}_{X}\cdot\cave{\vec g}_{Y}
  =P\cave{\vec f}_{X}\cdot\cave{\vec g}_{Y} 
  =\cave{\vec f}_{X}\cdot P^t\cave{\vec g}_{Y}
 =\cave{\vec f}_{X}\cdot \cave{P\vec g}_{Y}.
\end{split}
\end{equation}
So it is enough to prove the claim with $P\vec g$ in place of $\vec g$, and hence we may assume without loss of generality that also $\vec g=P\vec g$. But then we can repeat the argument in the non-degenerate case, but with $\R^n$ replaced by its subspace $H$ throughout; within this subspace, $\mathcal E_K\subset H$ is non-degenerate, and the previous case applies to give the desired result.
\end{proof}

In the following proposition, condition \eqref{it:1scaleDomSca} is a typical intermediate step that is established in the course of proving a sparse domination result for an operator, while condition \eqref{it:1scaleDomVec} is its convex body analogue. The proposition says that  \eqref{it:1scaleDomSca} in fact implies \eqref{it:1scaleDomVec}. It is essentially an abstraction (from $L^1$ averages to general dominating norms) of an idea already present in \cite[Lemma 3.2]{NPTV:convex}.

\begin{proposition}\label{prop:loc-sparse2convex}
Let $X,Y$ be normed spaces with subspaces $F\subset X$ and $G\subset Y$.
Let $Q_0\in\mathscr D$, and suppose that there are bilinear forms $t_Q:F\times G\to\R$ indexed by all $Q\in\mathscr D(Q_0)$.
Consider the following conditions:
\begin{enumerate}
  \item\label{it:1scaleDomSca} For all $(f,g)\in F\times G$, there exist disjoint $\hat Q_k\subset Q_0$ with $\sum_k\abs{\hat Q_k}\leq\eps\abs{Q_0}$ and such that: whenever $Q_j\subset Q_0$ are disjoint, not strictly contained in any $\hat Q_k$, and cover all $\hat Q_k$, then
\begin{equation*}
  \abs{t_{Q_0}(f,g)-\sum_j t_{Q_j}(f,g)}\leq C\Norm{f}{X}\Norm{g}{Y}.
\end{equation*}
  \item\label{it:1scaleDomVec} For all $(\vec f,\vec g)\in F^n\times G^n $, there exist disjoint $Q_k\subset Q_0$ with $\sum_k\abs{Q_k}\leq\eps_n\abs{Q_0}$ and such that
\begin{equation*}
  \abs{t_{Q_0}(\vec f,\vec g)-\sum_k t_{Q_k}(\vec f,\vec g)}\leq C_n\cave{\vec f}_{X}\cdot\cave{\vec g}_{Y}.
\end{equation*}
\end{enumerate}
For each $n\in\Z_+$, condition \eqref{it:1scaleDomSca} implies condition \eqref{it:1scaleDomVec} with $\eps_n=n\eps$ and $C_n=Cn^{3/2}$.
\end{proposition}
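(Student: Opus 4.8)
The plan is to run the John-ellipsoid reduction of Lemma~\ref{lem:fi-vs-convex} together with the basis-independence and adjoint identities for the extended bilinear forms established earlier, and to supplement the toy-model argument of Proposition~\ref{prop:loc-sparse2convex-pre} with a maximal-cube bookkeeping step that produces a single disjoint family which is simultaneously admissible for all $n$ scalar instances of condition~\eqref{it:1scaleDomSca}.

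First I would set up the non-degenerate case. Given $(\vec f,\vec g)\in F^n\times G^n$, put $K:=\cave{\vec f}_X$, let $\mathcal E_K$ be its John ellipsoid with $\mathcal E_K\subset K\subset\sqrt n\,\mathcal E_K$, take $R_K$ linear with $R_K\mathcal E_K=\bar B_{\R^n}$, fix an orthonormal basis $(\vec e_i)_{i=1}^n$ of $\R^n$, and set $f_i:=R_K\vec f\cdot\vec e_i$ and $g_i:=R_K^{-t}\vec g\cdot\vec e_i$; these lie in $F$ and $G$ respectively, since $R_K$ only recombines the $n$ coordinates and $F,G$ are subspaces. Exactly as in~\eqref{eq:tvecfvecg-pre}, the identity $t(A\vec x,\vec y)=t(\vec x,A^t\vec y)$, valid for any bilinear form, applied with $A=R_K^{-1}$ to each $t_Q$, together with basis-independence, gives $t_Q(\vec f,\vec g)=\sum_{i=1}^n t_Q(f_i,g_i)$ for every $Q\in\mathscr D(Q_0)$, in particular for $Q=Q_0$.

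Next comes the stopping-cube step, which is the one genuinely new point. Apply~\eqref{it:1scaleDomSca} to each scalar pair $(f_i,g_i)$, $i=1,\dots,n$, obtaining disjoint cubes $\hat Q^{(i)}_k\subset Q_0$ with $\sum_k\abs{\hat Q^{(i)}_k}\leq\eps\abs{Q_0}$. Let $\{Q_k\}$ be the family of maximal dyadic cubes occurring in the combined collection $\{\hat Q^{(i)}_k:1\le i\le n,\ k\}$. By the dyadic lattice structure the $Q_k$ are pairwise disjoint, every cube of the combined collection is contained in one of them, and
\begin{equation*}
  \sum_k\abs{Q_k}=\Babs{\bigcup_{i,k}\hat Q^{(i)}_k}\le\sum_{i=1}^n\sum_k\abs{\hat Q^{(i)}_k}\le n\eps\abs{Q_0},
\end{equation*}
so we may take $\eps_n=n\eps$. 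The key observation is that for each fixed $i$ the family $\{Q_k\}$ is an admissible choice in the instance of~\eqref{it:1scaleDomSca} at $(f_i,g_i)$: the $Q_k$ are disjoint and contained in $Q_0$; none of them is strictly contained in any $\hat Q^{(i)}_{k'}$, since that would contradict maximality of $Q_k$ within the larger combined collection; and they cover every $\hat Q^{(i)}_{k'}$, as just noted. Hence~\eqref{it:1scaleDomSca} yields $\abs{t_{Q_0}(f_i,g_i)-\sum_k t_{Q_k}(f_i,g_i)}\le C\Norm{f_i}{X}\Norm{g_i}{Y}$ for each $i$.

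Summing over $i$, recombining by the identities of the second step, and invoking Lemma~\ref{lem:fi-vs-convex} for the last inequality,
\begin{equation*}
  \Babs{t_{Q_0}(\vec f,\vec g)-\sum_k t_{Q_k}(\vec f,\vec g)}\le\sum_{i=1}^n C\Norm{f_i}{X}\Norm{g_i}{Y}\le Cn^{3/2}\cave{\vec f}_X\cdot\cave{\vec g}_Y,
\end{equation*}
which is~\eqref{it:1scaleDomVec} with $C_n=Cn^{3/2}$. Finally, the degenerate case reduces to this one exactly as in Proposition~\ref{prop:loc-sparse2convex-pre}: since $\vec f=P\vec f$ for the orthogonal projection $P$ of $\R^n$ onto $H:=\lspan K$, identities~\eqref{eq:tfg-vs-tfPg}--\eqref{eq:caveg-vs-cavePg} let us replace $\vec g$ by $P\vec g\in G^n$ and repeat the argument inside $H$, where $\mathcal E_K$ is non-degenerate; the constant and the measure bound only improve since $\dim H\le n$. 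The main obstacle is entirely in the stopping-cube step: one must check that a single disjoint family --- the maximal cubes of the union of the $n$ stopping collections --- is simultaneously admissible for all $n$ scalar instances while keeping $\sum_k\abs{Q_k}\le n\eps\abs{Q_0}$; the remaining linear-algebra and John-ellipsoid parts are a direct transcription of the toy model.
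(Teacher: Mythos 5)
Your proposal is correct and follows essentially the same route as the paper's own proof: the same John-ellipsoid reduction to the scalar pairs $(f_i,g_i)$, the same choice of the maximal cubes of the union of the $n$ stopping collections (with the same verification that this single family is simultaneously admissible for each scalar instance and has total measure at most $n\eps\abs{Q_0}$), and the same projection argument for the degenerate case. No gaps.
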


Of course, the condition $\sum_k\abs{Q_k}\leq\eps_n\abs{Q_0}$ is only useful for $\eps_n<1$. For a fixed $\eps$ and $\eps_n=n\eps$, this would only allow us to conclude \eqref{it:1scaleDomVec} for boundedly many values of $n$; so in order to obtain \eqref{it:1scaleDomVec} for all $n\in\N$, we need \eqref{it:1scaleDomSca} for arbitrarily small $\eps>0$. This is seldom a problem in concrete situations.

\begin{proof}
As in the proof of Proposition \ref{prop:loc-sparse2convex-pre}, given $\vec f$, we consider the compact, convex, symmetric set 
\begin{equation*}
  K:=\cave{\vec f}_{X}, 
\end{equation*}
and denote by $\mathcal E_K$ its John ellipsoid such that
\begin{equation*}
  \mathcal E_K\subset K\subset\sqrt{n}\mathcal E_K. 
\end{equation*}

\subsubsection*{Case: $\mathcal E_K$ is non-degenerate}
Let $R_K$ be a linear transformation such that $R_K\mathcal E_K=\bar B_{\R^n}$, the closed unit ball of $\R^n$.
Let $(\vec e_i)_{i=1}^n$ be some orthonormal basis of $\R^n$.
As in \eqref{eq:tvecfvecg-pre}, we then write
\begin{equation}\label{eq:tvecfvecg}
\begin{split}
  t_{Q_0}(\vec f,\vec g)
  &=t_{Q_0}(R_K^{-1}R_K\vec f,\vec g)
  =t_{Q_0}(R_K\vec f,R_K^{-t}\vec g) \\
  &=\sum_{i=1}^n t_{Q_0}(R_K\vec f\cdot\vec e_i,R_K^{-t}\vec g\cdot\vec e_i)
    =:\sum_{i=1}^n t_{Q_0}(f_i,g_i),
\end{split}
\end{equation}
where $f_i$ and $g_i$ are as in Lemma \ref{lem:fi-vs-convex}.

It is from this point on that the present proof requires some elaboration compared to the proof of Proposition \ref{prop:loc-sparse2convex-pre}. According to assumption \eqref{it:1scaleDomSca}, for each of the pairs of functions $f_i:=R_K\vec f\cdot\vec e_i$ and $g_i:=R_K^{-t}\vec g\cdot\vec e_i$, we can find disjoint $\hat Q_{i,k}\subset Q_0$ with $\sum_k\abs{\hat Q_{i,k}}\leq\eps\abs{Q_0}$ and such that: whenever $Q_j\subset Q_0$ are disjoint, not strictly contained in any $\hat Q_{i,k}$, and cover all $\hat Q_{i,k}$, then
\begin{equation}\label{eq:tfigi}
  \abs{t_{Q_0}(f_i,g_i)-\sum_j t_{Q_j}(f_i,g_i)}\leq C\Norm{f_i}{X}\Norm{g_i}{Y}.
\end{equation}
We make the following specific choice of the cubes $Q_j$: Let $\{Q_j\}_{j=1}^\infty$ be the maximal cubes among $\{\hat Q_{i,k}\}_{1\leq i\leq n}^{1\leq k<\infty}$. Then
\begin{equation*}
  \sum_j\abs{Q_j}\leq\sum_{i=1}^n\sum_{k=1}^\infty\abs{\hat Q_{i,k}}
  \leq\sum_{k=1}^n\eps\abs{Q_0}
  = n\eps\abs{Q_0},
\end{equation*}
and \eqref{eq:tfigi} holds with these $Q_j$ for each $i=1,\ldots,n$. Using \eqref{eq:tvecfvecg}, and observing that it also holds with $Q_0$ replaced by $Q_j$, it follows that
\begin{equation*}
\begin{split}
   &\abs{t_{Q_0}(\vec f, \vec g)-\sum_j t_{Q_j}(\vec f,\vec g)}
   \leq\sum_{i=1}^n \abs{t_{Q_0}(f_i,g_i)-\sum_j t_{Q_j}(f_i,g_i)} \\
   &\qquad\leq C\sum_{i=1}^n\Norm{f_i}{X}\Norm{g_i}{Y}
   \leq Cn^{3/2}\cave{\vec f}_X\cdot\cave{\vec g}_Y,
\end{split}
\end{equation*}
using Lemma \ref{lem:fi-vs-convex} in the last step.
This completes the proof under the assumption that $\mathcal E_K$ is non-degenerate.

\subsubsection*{Case: $\mathcal E_K$ is degenerate}
This follows the corresponding case in the proof of Proposition \ref{prop:loc-sparse2convex-pre} almost verbatim. Like there, let $H:=\lspan K$, and let $P$ denote the orthogonal projection of $\R^n$ onto $H$. We then have \eqref{eq:tfg-vs-tfPg} for each $t=t_Q$, as well as \eqref{eq:caveg-vs-cavePg}. So it is again enough to prove the claim with $P\vec g$ in place of $\vec g$, and hence we may assume without loss of generality that also $\vec g=P\vec g$. But then we can repeat the argument in the non-degenerate case, but with $\R^n$ replaced by its subspace $H$ throughout; within this subspace, $\mathcal E_K\subset H$ is non-degenerate, and the previous case applies to give the desired result.
\end{proof}


\section{From single-scale bounds to global bounds}

This passage is by now a relatively routine part of the theory, but we include some details for completeness. The following lemma is again stated in an operator-free, and even function-free way, simply as a criterion for dominating a real number by sum over a sparse collection. A more concrete situation for applying this criterion is presented afterwards.

\begin{lemma}\label{lem:loc2glob}
Consider numbers $a\in\R$ and $a_Q,c_Q\in\R$ indexed by dyadic cubes $Q\in\mathscr D$, with the following properties:
\begin{enumerate}
  \item\label{it:cover} There is a family $\mathscr Q$ of disjoint dyadic cubes such that
\begin{equation*}
   a=\sum_{Q\in\mathscr Q}a_Q.
\end{equation*}
  \item\label{it:local} For some $\delta\in(0,1)$ and each $Q\in\mathscr D$ that is contained in some $P\in\mathscr Q$, there is a family of disjoint $Q_k\in\mathscr D(Q)$ such that
\begin{equation*}
   \sum_k\abs{Q_k}\leq\delta\abs{Q},\qquad
   \Babs{a_{Q}-\sum_k a_{Q_k}}\leq c_{Q}.
\end{equation*}
  \item\label{it:limit} For some $\alpha,C\in[1,\infty)$ and each $Q\in\mathscr D$ that is contained in some $P\in\mathscr Q$, we have $\abs{a_Q}\leq C\abs{Q}^\alpha$.
\end{enumerate}
Then there is a $(1-\delta)$-sparse family of dyadic cubes $\mathscr S$ such that
\begin{equation*}
   \mathscr S\subset\bigcup_{Q\in\mathscr Q}\mathscr D(Q),\qquad  \abs{a}\leq \sum_{S\in\mathscr S}c_S.
\end{equation*}
\end{lemma}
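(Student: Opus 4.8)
The strategy is the standard iterative ``stopping-time'' construction that turns a one-scale improvement into a sparse family, so I will build $\mathscr S$ by recursion on the scale, starting from the top cubes in $\mathscr Q$ and repeatedly applying hypothesis \eqref{it:local}. Concretely, for each $P\in\mathscr Q$ I would set up generations $\mathscr G_0(P)=\{P\}$, and having defined $\mathscr G_m(P)$, I would let $\mathscr G_{m+1}(P)$ be the union over $Q\in\mathscr G_m(P)$ of the disjoint subcubes $Q_k\in\mathscr D(Q)$ furnished by \eqref{it:local}; then $\mathscr S:=\bigcup_{P\in\mathscr Q}\bigcup_{m\geq 0}\mathscr G_m(P)$. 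The sparsity is immediate from the construction: the cubes in a fixed generation are disjoint, each $Q\in\mathscr G_m(P)$ has $\sum_{Q_k\in\mathscr G_{m+1}(P),\,Q_k\subset Q}\abs{Q_k}\leq\delta\abs{Q}$, so iterating the bound $\abs{\bigcup\mathscr G_{m+1}(P)\cap Q}\leq\delta\abs{Q}$ shows each $Q\in\mathscr S$ has a child-set of measure at most $\delta\abs Q$; hence the pairwise disjoint sets $E_Q:=Q\setminus\bigcup\{Q'\in\mathscr S: Q'\subsetneq Q,\ Q'\text{ maximal}\}$ satisfy $\abs{E_Q}\geq(1-\delta)\abs Q$, which is exactly $(1-\delta)$-sparsity. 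Also $\mathscr S\subset\bigcup_{Q\in\mathscr Q}\mathscr D(Q)$ by construction.

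For the main estimate I would telescope. Fix $P\in\mathscr Q$; for each $Q\in\mathscr G_m(P)$ the second inequality in \eqref{it:local} gives $\abs{a_Q-\sum_{Q_k}a_{Q_k}}\leq c_Q$, where the $Q_k$ range over the children of $Q$ in $\mathscr G_{m+1}(P)$. Summing over $Q\in\mathscr G_m(P)$ and then over $m=0,1,\dots,M-1$, the sums over generations telescope and yield
\begin{equation*}
  \Babs{a_P-\sum_{Q\in\mathscr G_M(P)}a_Q}\leq\sum_{m=0}^{M-1}\sum_{Q\in\mathscr G_m(P)}c_Q.
\end{equation*}
Now hypothesis \eqref{it:limit}, together with the fact that the total measure of $\mathscr G_M(P)$ is at most $\delta^M\abs P$ and the cubes there are disjoint, gives
\begin{equation*}
  \sum_{Q\in\mathscr G_M(P)}\abs{a_Q}\leq C\sum_{Q\in\mathscr G_M(P)}\abs Q^\alpha\leq C\Big(\max_{Q\in\mathscr G_M(P)}\abs Q\Big)^{\alpha-1}\sum_{Q\in\mathscr G_M(P)}\abs Q\leq C\abs P^{\alpha-1}\delta^M\abs P^{},
\end{equation*}
which tends to $0$ as $M\to\infty$ since $\delta<1$ and $\alpha\geq 1$. (If some generation $\mathscr G_M(P)$ is empty the series terminates and the same conclusion holds trivially.) Therefore $\abs{a_P}\leq\sum_{m\geq 0}\sum_{Q\in\mathscr G_m(P)}c_Q=\sum_{S\in\mathscr S\cap\mathscr D(P)}c_S$. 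Finally I would sum over $P\in\mathscr Q$ using \eqref{it:cover}: since the $P\in\mathscr Q$ are disjoint, the families $\mathscr D(P)$ are disjoint too, so
\begin{equation*}
  \abs a=\Babs{\sum_{P\in\mathscr Q}a_P}\leq\sum_{P\in\mathscr Q}\abs{a_P}\leq\sum_{P\in\mathscr Q}\sum_{S\in\mathscr S\cap\mathscr D(P)}c_S=\sum_{S\in\mathscr S}c_S.
\end{equation*}

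The only genuinely delicate point is the passage to the limit $M\to\infty$: one must check that the ``error'' term $\sum_{Q\in\mathscr G_M(P)}a_Q$ really does vanish, and this is precisely where hypothesis \eqref{it:limit} (a mild polynomial control $\abs{a_Q}\leq C\abs Q^\alpha$ with $\alpha\geq1$) is used, in combination with the geometric decay $\abs{\bigcup\mathscr G_M(P)}\leq\delta^M\abs P$ coming from \eqref{it:local}. Everything else is bookkeeping about disjointness of cubes across generations. One small subtlety worth stating carefully is that the $c_Q$ are not assumed nonnegative, but the construction only ever produces upper bounds of the form $\abs{\cdots}\leq\sum c_Q$, and the triangle inequality in the telescoping step is valid regardless of sign, so no positivity of $c_Q$ is needed; similarly the cubes $Q_k$ in \eqref{it:local} are allowed to be empty or the family finite, which only shortens the recursion.
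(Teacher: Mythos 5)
Your proposal is correct and follows essentially the same route as the paper: the same generational stopping-time construction of $\mathscr S$, the same telescoping identity with a remainder over the $M$-th generation, the vanishing of that remainder via hypothesis \eqref{it:limit} combined with the measure decay $\delta^M\abs{P}$, and the same final summation over $P\in\mathscr Q$ using \eqref{it:cover}. The only (immaterial) difference is cosmetic: you bound $\sum_Q\abs{Q}^\alpha$ by $(\max_Q\abs{Q})^{\alpha-1}\sum_Q\abs{Q}$, while the paper uses $\sum_Q\abs{Q}^\alpha\leq\big(\sum_Q\abs{Q}\big)^\alpha$.
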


\begin{remark}\label{rem:loc2glob}
If $\mathscr Q=\{Q_0\}$ consists of a single cube only, then condition \eqref{it:cover} is automatic with $a=a_{Q_0}$.
\end{remark}

\begin{proof}
Let $\mathscr Q\subset\mathscr D$ be a disjoint collection provided by assumption \eqref{it:cover}. For each $P\in\mathscr Q$, denote $\mathscr S_0(P):=\{P\}$.
 Assuming that a disjoint $\mathscr S_j(P)\subset\mathscr D(P)$ has already been constructed, for each $Q\in\mathscr S_j(P)$, let $\mathscr S'(Q):=\{Q_k\}_{k=1}^\infty$ be the collection provided by assumption \eqref{it:local}, and let $\mathscr S_{j+1}(P):=\bigcup_{Q\in\mathscr S_j(P)}\mathscr S'(Q)$. Let also 
  $\mathscr S(P):=\bigcup_{j=0}^\infty\mathscr S_j(P)$, and $\mathscr S:=\bigcup_{P\in\mathscr Q}\mathscr S(P)$.

For $Q\in\mathscr S$, let $E(Q):=Q\setminus\bigcup_{R\in\mathscr S'(Q)}R$. From the construction it is clear that these sets $E(Q)$ are pairwise disjoint, and by assumption \eqref{it:local} we have $\abs{E(Q)}\geq(1-\delta)\abs{Q}$.

By telescoping, for each $P\in\mathscr Q$, we have
\begin{equation*}
  a_P=\sum_{j=0}^{k-1}\sum_{Q\in\mathscr S_j(P)}\Big(a_Q-\sum_{R\in\mathscr S'(Q)}a_R\Big)+\sum_{S\in\mathscr S_k(P)}a_S.
\end{equation*}
and hence, using assumptions \eqref{it:local} and \eqref{it:limit},
\begin{equation*}
  \abs{a_P}\leq\sum_{j=1}^{k-1}\sum_{Q\in\mathscr S_j(P)}c_Q+\sum_{S\in\mathscr S_k(P)} C\abs{S}^\alpha
\end{equation*}
By an elementary inequality and induction, we have
\begin{equation*}
  \sum_{S\in\mathscr S_k(P)} \abs{S}^\alpha
  \leq\Big(\sum_{S\in\mathscr S_k(P)} \abs{S}\Big)^\alpha\leq(\delta^k\abs{P})^\alpha,
\end{equation*}
and hence
\begin{equation*}
  \abs{a_P}\leq\lim_{k\to\infty}\sum_{j=1}^{k-1}\sum_{Q\in\mathscr S_j(P)}c_Q=\sum_{Q\in\mathscr S(P)}c_Q.
\end{equation*}
Substituting this into assumption \eqref{it:cover}, we obtain the claim.
\end{proof}

\begin{lemma}\label{lem:easyConds}
Suppose that $t$ is a bilinear form on $L^\infty_c(\R^d;E)\times L^\infty_c(\R^d;H)$, and moreover bounded with respect to the norm of $L^p(\R^d;E)\times L^q(\R^d;H)$ for some exponents with $1/p+1/q\geq 1$. For $(\vec f,\vec g)\in L^\infty_c(\R^d;E)^n\times L^\infty_c(\R^d;H)^n$, the numbers
\begin{equation*}
   a=t(\vec f,\vec g),\quad a_Q=t(1_{3Q}\vec f,1_Q \vec g)
\end{equation*}
satisfy assumptions \eqref{it:cover} and \eqref{it:limit} of Lemma \ref{lem:loc2glob}, provided that $\mathscr D$ is a dyadic system without quadrants.
\end{lemma}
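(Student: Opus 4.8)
The plan is to verify conditions \eqref{it:cover} and \eqref{it:limit} of Lemma~\ref{lem:loc2glob} directly; both follow quickly from the hypotheses, and the absence of quadrants is used only for the first. I would check \eqref{it:cover} by taking $\mathscr Q$ to be a single cube: since $\vec f$ and $\vec g$ lie in $L^\infty_c$, the set $\supp\vec f\cup\supp\vec g$ (the union of the supports of the $n$ components) is bounded, and because $\mathscr D$ has no quadrants there is a dyadic cube $Q_0\in\mathscr D$ containing it; then $1_{3Q_0}\vec f=\vec f$ and $1_{Q_0}\vec g=\vec g$ almost everywhere, so $a_{Q_0}=t(1_{3Q_0}\vec f,1_{Q_0}\vec g)=t(\vec f,\vec g)=a$, and the (trivially disjoint) family $\mathscr Q:=\{Q_0\}$ satisfies $a=\sum_{Q\in\mathscr Q}a_Q$, which is \eqref{it:cover} (cf.\ Remark~\ref{rem:loc2glob}).

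For \eqref{it:limit} I would take $\alpha:=1/p+1/q$, which lies in $[1,\infty)$ by hypothesis, and estimate $a_Q$ crudely. Using the standard orthonormal basis $(\vec e_i)_{i=1}^n$ of $\R^n$, for which $\vec f\cdot\vec e_i=f_i$ and $\vec g\cdot\vec e_i=g_i$ are the components, the definition of the extended bilinear form gives $a_Q=\sum_{i=1}^n t(1_{3Q}f_i,1_Q g_i)$, whence $\abs{a_Q}\leq\sum_{i=1}^n\abs{t(1_{3Q}f_i,1_Q g_i)}$. Let $C_t<\infty$ be such that $\abs{t(u,v)}\leq C_t\Norm{u}{L^p(\R^d;E)}\Norm{v}{L^q(\R^d;H)}$ for all $u\in L^\infty_c(\R^d;E)$ and $v\in L^\infty_c(\R^d;H)$. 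Since $f_i$ and $g_i$ are bounded with bounded support, $\Norm{1_{3Q}f_i}{L^p(\R^d;E)}\leq 3^{d/p}\Norm{f_i}{L^\infty}\abs{Q}^{1/p}$ and $\Norm{1_Q g_i}{L^q(\R^d;H)}\leq\Norm{g_i}{L^\infty}\abs{Q}^{1/q}$ (with the obvious reading when $p=\infty$, which forces $q=1$). Summing over $i$, we get $\abs{a_Q}\leq C\abs{Q}^\alpha$ for every $Q\in\mathscr D$ with $C:=\max\{1,\ 3^{d/p}C_t\sum_{i=1}^n\Norm{f_i}{L^\infty}\Norm{g_i}{L^\infty}\}<\infty$; in particular this holds for all $Q$ contained in $Q_0$, which is \eqref{it:limit}.

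The argument is routine and I do not anticipate a genuine obstacle; what deserves comment is only where the two hypotheses are used. The no-quadrants assumption is exactly what lets a single dyadic cube swallow $\supp\vec f\cup\supp\vec g$; were it to fail, one would be forced to spread $\mathscr Q$ over several quadrants, and the enlargement $1_{3Q}\vec f$ would then leak across quadrant boundaries and break the identity $a=\sum_{Q\in\mathscr Q}a_Q$. The hypothesis $1/p+1/q\geq 1$ is precisely what makes the exponent $\alpha=1/p+1/q$ coming out of H\"older's inequality admissible in Lemma~\ref{lem:loc2glob}, where $\alpha\geq 1$ is used to pass from $\sum_S\abs{S}^\alpha$ to $\bigl(\sum_S\abs{S}\bigr)^\alpha$ and thereby make the tail of the sparse construction vanish.
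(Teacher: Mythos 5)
Your proof is correct. The verification of condition \eqref{it:limit} coincides with the paper's: expand $a_Q=\sum_i t(1_{3Q}f_i,1_Qg_i)$ in components, apply the assumed $L^p\times L^q$ bound, and use the compact support and boundedness of each component to extract $\abs{Q}^{1/p+1/q}$, with $\alpha=1/p+1/q\geq1$ by hypothesis (your adjustment to force $C\geq 1$ is a correct piece of bookkeeping). For condition \eqref{it:cover} you take a genuinely different, and simpler, route: you let $\mathscr Q=\{Q_0\}$ be a single dyadic cube swallowing $\supp\vec f\cup\supp\vec g$, so that $a=a_{Q_0}$ and the identity is trivial. The paper instead takes $\mathscr Q$ to be the family of \emph{maximal} dyadic cubes not containing $\supp\vec f$; these are disjoint, cover $\R^d$, satisfy $\supp\vec f\subset 3Q$ by maximality, and have side length at least $\tfrac12\operatorname{diam}(\supp\vec f)$, so only finitely many of them meet $\supp\vec g$ and the sum $a=\sum_{Q\in\mathscr Q}a_Q$ converges trivially. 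Both are legitimate witnesses for condition \eqref{it:cover}, and both invoke the no-quadrants assumption in the same way (existence of arbitrarily large dyadic ancestors); the paper's choice keeps the top cubes at the scale of $\supp\vec f$, which is the customary normalization in sparse domination, while yours is shorter and loses nothing here because the applications use normalized averages, for which a single huge top cube contributes only a harmless (indeed small) term.
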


\begin{proof}
Since $\mathscr D$ is without quadrants, each $Q\in\mathscr D$ is contained in some (large enough) $R\in\mathscr D$ that contains $\supp \vec f$. Thus the collection $\mathscr Q$ of maximal cubes that do not contain $\supp \vec f$ form a cover of $\R^d$. By maximality, it follows that $\supp \vec f\subset 3Q$, and hence $\vec f=1_{3Q}\vec f$ for every $Q\in\mathscr Q$. On the other hand, any $Q$ with $\ell(Q)<\operatorname{diam}(\supp \vec f)$ cannot contain $\supp\vec f$; hence any $Q$  with $\ell(Q)<\frac12\operatorname{diam}(\supp \vec f)$ cannot be among the maximal cubes $\mathscr Q$, and thus every $Q\in\mathscr Q$ will have to satisfy $\ell(Q)\geq\frac12\operatorname{diam}\vec f$. Since $\vec g\in L^\infty_c(\R^d;F)^n$, there are only finitely many $Q\in\mathscr Q$ with $1_Q \vec g\neq 0$. Hence, without any issues of convergence, we can write
\begin{equation*}
  t(\vec f,\vec g)=t\Big(\vec f,\sum_{Q\in\mathscr Q}1_Q \vec g\Big)=\sum_{Q\in\mathscr Q}t(\vec f,1_Q \vec g)=\sum_{Q\in\mathscr Q}t(1_{3Q}\vec f,1_Q \vec g),
\end{equation*}
which is condition \eqref{it:cover}.

If $n=1$, the assumed boundedness directly implies that
\begin{equation*}
   \abs{t(1_{3Q}f,1_Q g)}\leq C\Norm{1_{3Q}f}{L^p(\R^d;E)}\Norm{1_Q g}{L^{q}(\R^d;F)}
   \leq C 3^{d/p}\Norm{f}{\infty}\Norm{g}{\infty}\abs{Q}^{1/p+1/q},
\end{equation*}
where $\alpha:=1/p+1/q\geq 1$, as required for condition \eqref{it:limit}. In general, if $(\vec e_i)_{i=1}^n$ is an orthonormal basis of $\R^n$ and $\vec f=\sum_{i=1}^n f_i\vec e_i$ and similarly for $\vec g$, we have
\begin{equation*}
  \abs{t(1_{3Q}\vec f,1_Q \vec g)}\leq\sum_{i=1}^n\abs{t(1_{3Q}f_i,1_Q g_i)}
   \leq Cn 3^{d/p}\Norm{\vec f}{\infty}\Norm{\vec g}{\infty}\abs{Q}^{1/p+1/q},
\end{equation*}
using the previous bound in each component and trivial bounds like $\Norm{f_i}{\infty}\leq\Norm{\vec f}{\infty}$.
\end{proof}

We are finally ready to state a semi-generic convex body domination principle. Condition \eqref{it:1scaleDomScaNew} below is a typical intermediate estimate in a number of sparse domination proofs for different operators. The conclusion is that it is already good enough to conclude convex body domination as well.

\begin{corollary}\label{cor:1scale2cbd}
Let $E$ and $H$ be Banach spaces, and suppose that $t$ is a bilinear form defined on $F\times G:=L^\infty_c(\R^d;E)\times L^\infty_c(\R^d;H)$ and bounded with respect to the norm of $L^p(\R^d;E)\times L^q(\R^d;H)$ for some exponents with $1/p+1/q\geq 1$, and suppose that 
\begin{enumerate}
  \item\label{it:1scaleDomScaNew} for all $(f,g)\in F\times G$ and all $Q\in\mathscr D$, there are disjoint $\hat Q_k\subset Q$ with $\sum_k\abs{\hat Q_k}\leq\eps\abs{Q}$ and such that: whenever $Q_j\subset Q$ are disjoint, not strictly contained in any $\hat Q_k$, and cover all $\hat Q_k$, then
\begin{equation}\label{eq:1scaleDomScaNew}
  \abs{t(1_{3Q}f,1_{Q}g)-\sum_j t(1_{3Q_j}f,1_{Q_j}g)}\leq c\Norm{f}{X(Q)}\Norm{g}{Y(Q)}\abs{Q}
\end{equation}
for some norms $\Norm{\ }{X(Q)}$ on $L^\infty_c(\R^d;E)$ and $\Norm{\ }{Y(Q)}$ on $L^\infty_c(\R^d;H)$.
\end{enumerate}
Then for all $(\vec f,\vec g)\in F^n\times G^n$, there is a $(1-\eps_n)$-sparse collection $\mathscr S\subset\mathscr D$ such that
\begin{equation*}
   \abs{t(\vec f,\vec g)}\leq c_n\sum_{S\in\mathscr S}\cave{\vec f}_{X(S)}\cdot\cave{\vec g}_{Y(S)}\abs{S},
\end{equation*}
where $\eps_n=n\eps$ and $c_n=cn^{3/2}$.
\end{corollary}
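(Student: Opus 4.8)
The plan is to string together the three preparatory results of this section---Lemma~\ref{lem:easyConds}, Proposition~\ref{prop:loc-sparse2convex} and Lemma~\ref{lem:loc2glob}---applied to the vector-valued data. Fix a dyadic system $\mathscr D$ without quadrants, let $(\vec f,\vec g)\in F^n\times G^n$, and put
\begin{equation*}
  a:=t(\vec f,\vec g),\qquad a_Q:=t(1_{3Q}\vec f,1_Q\vec g),\qquad
  c_Q:=c\,n^{3/2}\,\cave{\vec f}_{X(Q)}\cdot\cave{\vec g}_{Y(Q)}\,\abs{Q},
\end{equation*}
where $t$ on the right-hand side denotes the extension of the given bilinear form to $\R^n$-valued arguments. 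The standing hypotheses on $t$ are exactly those required by Lemma~\ref{lem:easyConds}, so that lemma provides a disjoint covering family $\mathscr Q\subset\mathscr D$ and an exponent $\alpha=1/p+1/q\geq1$ for which assumptions~\eqref{it:cover} and~\eqref{it:limit} of Lemma~\ref{lem:loc2glob} hold for these $a$ and $a_Q$. The remaining task is to verify assumption~\eqref{it:local}, with $\delta:=n\eps$ (assumed, as we may, to be $<1$) and the $c_Q$ above.

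For this, fix $Q\in\mathscr D$ contained in some $P\in\mathscr Q$ and invoke Proposition~\ref{prop:loc-sparse2convex} with $Q_0:=Q$, with the normed spaces $X:=(L^\infty_c(\R^d;E),\Norm{\ }{X(Q)})$ and $Y:=(L^\infty_c(\R^d;H),\Norm{\ }{Y(Q)})$, and with the single-scale forms $t_R(f,g):=t(1_{3R}f,1_Rg)$ for $R\in\mathscr D(Q)$. Then condition~\eqref{it:1scaleDomScaNew} of the corollary is verbatim condition~\eqref{it:1scaleDomSca} of that proposition, the scalar constant being $C:=c\abs{Q}$ (alternatively, rescale the $Y$-norm by $\abs{Q}$ to absorb this factor, using that the set-valued norm scales linearly). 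Hence there are disjoint dyadic $Q_k\subset Q$ with $\sum_k\abs{Q_k}\leq n\eps\abs{Q}$ and
\begin{equation*}
  \Babs{t_Q(\vec f,\vec g)-\sum_k t_{Q_k}(\vec f,\vec g)}\leq c\,n^{3/2}\abs{Q}\,\cave{\vec f}_{X(Q)}\cdot\cave{\vec g}_{Y(Q)}=c_Q.
\end{equation*}
Since cutting off by $1_{3R}$ commutes with the $\R^n$-coordinates, one has $t_R(\vec f,\vec g)=\sum_i t\big(1_{3R}(\vec f\cdot\vec e_i),1_R(\vec g\cdot\vec e_i)\big)=t(1_{3R}\vec f,1_R\vec g)=a_R$ for every $R\in\mathscr D(Q)$; in particular the displayed increment is exactly $\abs{a_Q-\sum_k a_{Q_k}}$, so assumption~\eqref{it:local} of Lemma~\ref{lem:loc2glob} is confirmed.

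Lemma~\ref{lem:loc2glob} now applies and yields a $(1-n\eps)$-sparse family $\mathscr S\subset\bigcup_{Q\in\mathscr Q}\mathscr D(Q)\subset\mathscr D$ with $\abs{a}\leq\sum_{S\in\mathscr S}c_S$, which is precisely the claimed inequality with $\eps_n=n\eps$ and $c_n=cn^{3/2}$. The argument is in essence a concatenation of the cited results, so the only point requiring genuine care is the matching of indices and constants in the middle step: one must keep track that the single-scale bound~\eqref{eq:1scaleDomScaNew} carries both a $Q$-dependent weight $\abs{Q}$ and $Q$-dependent norms, which forces a fresh application of Proposition~\ref{prop:loc-sparse2convex} for each dyadic $Q$, and one must then check that the scale-$Q$ form $t_Q$, extended to $\R^n$-valued inputs, reproduces exactly the increments $a_Q$ handed to Lemma~\ref{lem:loc2glob}. (Should the $X(Q)$- and $Y(Q)$-``norms'' only be seminorms, as is common in practice, one passes to the associated normed quotients, on which the Hahn--Banach arguments behind Section~2 and Proposition~\ref{prop:loc-sparse2convex} are unaffected.)
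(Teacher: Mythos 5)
Your proposal is correct and follows essentially the same route as the paper's own proof: it concatenates Lemma~\ref{lem:easyConds} (for assumptions \eqref{it:cover} and \eqref{it:limit}), Proposition~\ref{prop:loc-sparse2convex} applied cube-by-cube with $t_R(f,g)=t(1_{3R}f,1_Rg)$, $C=c\abs{Q}$, $X=X(Q)$, $Y=Y(Q)$ (for assumption \eqref{it:local}), and then Lemma~\ref{lem:loc2glob}, exactly as in the paper. The extra verifications you include (that the vector extension of $t_Q$ reproduces the increments $a_Q$, and the remark on seminorms) are correct and harmless.
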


\begin{proof}
Let us begin by considering a fixed cube $Q=Q_0\in\mathscr D$. We observe that assumption \eqref{it:1scaleDomScaNew} of the present corollary coincides with condition \eqref{it:1scaleDomSca} of  Proposition \ref{prop:loc-sparse2convex} with
\begin{equation*}
   t_Q(f,g):=t(1_{3Q}f,1_{Q}g),\qquad C=c\abs{Q},\qquad X=X(Q),\qquad Y=Y(Q).
\end{equation*}
Hence the said proposition, applied to each fixed $Q=Q_0\in\mathscr D$ at a time, implies:
\begin{enumerate}\setcounter{enumi}{1}
  \item\label{it:1scaleDomScaVecNew} For all $(\vec f,\vec g)\in L^\infty_c(\R^d;E)^n\times L^\infty_c(\R^d;H)^n$ and all $Q\in\mathscr D$, there are disjoint $Q_k\subset Q$ with $\sum_k\abs{Q_k}\leq\eps_n\abs{Q}$ and such that
\begin{equation*}
  \abs{t(1_{3Q}\vec f,1_{Q}\vec g)-\sum_j t(1_{3Q_j}\vec f,1_{Q_j}\vec g)}\leq c_n\cave{\vec f}_{X(Q)}\cdot \cave{g}_{Y(Q)}\abs{Q},
\end{equation*}
where $\eps_n=n\eps$ and $c_n=cn^{3/2}$.
\end{enumerate}

Let us then consider a fixed pair $(\vec f,\vec g)\in L^\infty_c(\R^d;E)^n\times L^\infty_c(\R^d;H)^n$. We observe that condition \eqref{it:1scaleDomScaVecNew} above coincides with condition \eqref{it:local} of Lemma \ref{lem:loc2glob} with the choices
\begin{equation*}
    a_Q=t(1_{3Q}\vec f,1_{Q}\vec g),\qquad c_Q= c_n\cave{\vec f}_{X(Q)}\cdot \cave{g}_{Y(Q)}\abs{Q},\qquad \delta=\eps_n.
\end{equation*}
On the other hand, Lemma \ref{lem:easyConds} shows that these same $a_Q$, together with $a:=t(\vec f,\vec g)$, also satisfy conditions \eqref{it:cover} and \eqref{it:limit} of Lemma \ref{lem:loc2glob}. Thus, all assumptions, and hence the conclusions, of Lemma \ref{lem:loc2glob} are valid for the said quantities, and these conclusions agree with the claims of the result that we are proving. The proof is thus complete.
\end{proof}

To facilitate the discussion of consequences of Corollary \ref{cor:1scale2cbd}, we give

\begin{definition}\label{def:cbd}
Suppose that a pair of normed spaces $(X(Q),Y(Q))$ is associated to every dyadic cube $Q\in\mathscr D$.
We say that a bilinear form $t:F\times G\to\R$ satisfies the $(X(Q),Y(Q))$ convex body domination of order $n\in\N$ if $F\subseteq X(Q)$ and $G\subseteq Y(Q)$ for every $Q\in\mathscr D$, and if for every $(f,g)\in F^n\times G^n$, there exists a $\delta_n$-sparse collection $\mathscr S\subset\mathscr D$ such that
\begin{equation*}
    \abs{t(\vec f,\vec g)}\leq C_n\sum_{Q\in\mathscr S}\abs{Q}\cave{\vec f}_{X(Q)}\cdot\cave{\vec g}_{Y(Q)}.
\end{equation*}
We say that $t:F\times G\to\R$ satisfies the $(X(Q),Y(Q))$ convex body domination if it satisfies this for every $n\in\N$. We say that an operator $T:F\to G^*$ satisfies these properties if its associated bilinear form $t(f,g):=\pair{Tf}{g}$ does.
\end{definition}

Let us now consider some examples:

\begin{example}[Calder\'on--Zygmund operators]\label{ex:CZO}
Let $T$ be a Dini--Calder\'on--Zygmund operator, i.e., $T$ is $L^2(\R^d)$ bounded and has the representation
\begin{equation*}
  Tf(x)=\int_{\R^d}K(x,y)f(y)\ud y,\qquad x\notin\supp f,
\end{equation*}
where $\abs{K(x,y)}\leq c\abs{x-y}^{-d}$ and, for $\abs{x-x'}\leq\frac12\abs{x-y}$,
\begin{equation}\label{eq:CZomega}
  \abs{K(x,y)-K(x',y)}+\abs{K(y,x)-K(y,x')}\leq\omega\Big(\frac{\abs{x-x'}}{\abs{x-y}}\Big)\frac{1}{\abs{x-y}^d},
\end{equation}
where $\omega:[0,\frac12]\to[0,\infty)$ is increasing, subadditive, and satisfies the Dini condition
\begin{equation*}
  \int_0^{1/2}\omega(t)\frac{\ud t}{t}<\infty.
\end{equation*}
Then \eqref{it:1scaleDomScaNew} of Corollary \ref{cor:1scale2cbd} holds for $t(f,g)=\pair{Tf}{g}$ and $E=H=\R$ and $X(Q)=\avL^1(3Q)$, $Y(Q)=\avL^1(Q)$, even in a stronger form. Namely, on the left oif \eqref{eq:1scaleDomScaNew}, we have
\begin{equation}\label{eq:Lerner}
\begin{split}
     &\Babs{\Bpair{T(1_{3Q}f)}{1_{Q}g}-\sum_j \pair{T(1_{3Q_j}f)}{1_{Q_j}g}} \\
     &\leq\BNorm{1_QT(1_{3Q}f)-\sum_j 1_{Q_j}T(1_{3Q_j}f)}{L^\infty(Q)}\Norm{g}{L^1(Q)},
\end{split}
\end{equation}
and even the $L^\infty$ norm here is dominated by $\Norm{f}{\avL^1(3Q)}$, as essentially shown in \cite[(3.4)]{Lerner:NYJM}. (Strictly speaking, \cite[(3.4)]{Lerner:NYJM} is formally slightly weaker, but a straightforward modification of the argument gives the desired version, as observed in \cite[Proof of Theorem 3.4]{NPTV:convex}.) Thus Corollary \ref{cor:1scale2cbd} says that a Dini--Calder\'on--Zygmund operator satisfies $(\avL^1(3Q),\avL^1(Q))$ convex body domination, but this was of course already known from \cite{NPTV:convex} by essentially the same argument.
\end{example}

\begin{example}[Banach space -valued Calder\'on--Zygmund operators]\label{ex:CZO-Banach}
Let $T$ be as in Example \ref{ex:CZO} but now acting on the Bochner space $L^2(\R^d;E)$ of Banach space $E$ -valued functions, and with an operator-valued kernel $K(x,y)\in\bddlin(E)$ satisfying the same estimates as above but for the operator norm in place of the absolute value, e.g., $\Norm{K(x,y)}{\bddlin(E)}\leq c\abs{x-y}^{-d}$. It is in general a difficult problem to check the $L^2(\R^d;E)$-boundedness of such an operator, but we now take this as an assumption. For $g\in L^2(\R^d;E^*)$, we have \eqref{eq:Lerner} with $L^\infty(Q;E)$ and $L^1(Q;E^*)$ in place of $L^\infty(Q)$ and $L^1(Q)$, and the same proof of \cite[(3.4)]{Lerner:NYJM} (with same modifications pointed out in \cite[Proof of Theorem 3.4]{NPTV:convex}) shows that the $L^\infty(Q;E)$ norm is dominated by $\Norm{f}{\avL^1(3Q;E)}$. Thus we find that \eqref{it:1scaleDomScaNew} of Corollary \ref{cor:1scale2cbd} also holds with $X(Q)=\avL^1(3Q;E)$ and $Y(Q)=\avL^1(Q;E^*)$. The resulting sparse domination (i.e., case $n=1$ of the conclusion of Corollary \ref{cor:1scale2cbd}) was known before, first in \cite{HH:14} for a slightly smaller class of kernels, and since \cite[discussion on page 193]{Lacey:elem} in the present generality. However, the convex body domination in this Banach space -valued setting is completely new.
\end{example}

\begin{example}[Operators with grand maximal function control]\label{ex:grand}
Let $1 \leq q \leq r$ and $s \geq  1$. Suppose that $T$ is a linear operator
\begin{equation}\label{eq:TLcL1loc}
  T:L^\infty_c(\R^d)\to L^1_{\loc}(\R^d),
\end{equation}
that $T$ has weak type $(q,q)$, and that the bi-sublinear maximal operator
\begin{equation*}
  \mathcal M_T(f,g)(x):=\sup_{Q\owns x} \fint_Q \abs{T(1_{(3Q)^c}f)}\cdot\abs{g}
\end{equation*}
maps boundedly $\mathcal M_T:L^r\times L^s\to L^{\nu,\infty}$, where $1/\nu=1/r+1/s$. Then condition \eqref{it:1scaleDomScaNew} of Corollary \ref{cor:1scale2cbd} holds for $t(f,g)=\pair{Tf}{g}$ and $E=H=\R$ and $X(Q)=\avL^r(3Q)$, $Y(Q)=\avL^s(Q)$. This result is essentially contained in the proof of \cite[Theorem 3.1]{Lerner:rough}, where it appears as an intermediate step towards the sparse domination (i.e., case $n=1$ of the conclusion of Corollary \ref{cor:1scale2cbd}) for such operators. The extension to convex body domination was recently achieved in \cite{MRR:22}, so Corollary \ref{cor:1scale2cbd} only reproduces a known result here. A key example of concrete operators satisfying these assumptions consists of rough homogeneous singular integrals
\begin{equation*}
  Tf(x)=\int_{\R^d}\frac{\Omega(y)}{\abs{y}^d}f(x-y)\ud y,
\end{equation*}
where $\Omega(y)=\Omega(y/\abs{y})$ is a bounded function with vanishing average over the unit sphere.

As in Example \ref{ex:CZO-Banach}, the abstract result above, involving a priori bounds of $T$ and $\mathcal M_T$, extends straightforwardly to the Banach space -valued setting; however, verifying these bounds for concrete operators such as the rough homogeneous singular integrals may present a problem in this generality, since the scalar-valued versions depend on deep results of Seeger \cite{Seeger:rough}, which so far lack a Banach space -valued extension.
\end{example}


\section{Matrix-weighted inequalities for Banach space -valued operators}\label{sec:L2WE}

A matrix weight is a locally integrable function $W:\R^d\to\R^{n\times n}$ that is a.e.\ positive definite -valued. The space $L^p(W)$ consists of all measurable $\vec f:\R^d\to\R^n$ such that $W^{1/p}\vec f\in L^p(\R^d;\R^n)$, and $\Norm{\vec f}{L^p(W)}:=\Norm{W^{1/p}\vec f}{L^p(\R^d;\R^n)}$.

For a Banach space $E$, we extend this definition in a natural way: The space $L^p(W;E^n)$ consists of all measurable $\vec f:\R^d\to E^n$ such that $W^{1/p}\vec f\in L^p(\R^d;E^n)$, and $\Norm{\vec f}{L^p(W;E^n)}:=\Norm{W^{1/p}\vec f}{L^p(\R^d;E^n)}$. Here, at each $x\in\R^d$, we define $(W^{1/p}\vec f)(x)\in E^n$ as the vector with components $(W^{1/p}\vec f)_i(x):=\sum_{j=1}^n (W^{1/p}(x))_{ij}f_j(x)$, i.e., the matrix multiplication on $\R^n$ is extended to $E^n$ in the natural way.

We now concentrate on $p=2$. For two matrix weights $W,V:\R^d\to\R^{n\times n}$, we define
\begin{equation*}
  [W,V]_{A_2}:=\sup_Q\abs{\ave{W}_Q^{1/2}\ave{V}_Q^{1/2}}^2,\qquad
  [W]_{A_2}:=[W,W^{-1}]_{A_2},
\end{equation*}
where we denote the operator norm in $\R^{n\times n}\simeq\bddlin(\R^n)$ simply by $\abs{\ }$. We denote by $A_2(\R^d;\R^n)$ the class of matrix weights $W:\R^d\to\R^{n\times n}$ for which $[W]_{A_2}<\infty$. We also define
\begin{equation*}
  [W]_{A_\infty}:=\sup_{\vec e\in\R^n}[x\mapsto \vec e\cdot W(x)\vec e]_{A_\infty},
\end{equation*}
where on the right we have $A_\infty$ ``norms'' of some scalar weights, defined as usual by
\begin{equation*}
  [w]_{A_\infty}:=\sup_Q\frac{1}{w(Q)}\int_Q M(1_Q w).
\end{equation*}
According to \cite[Remark 4.4]{NPTV:convex}, we have
\begin{equation}\label{eq:Ainfty-vs-A2}
  [W]_{A_\infty}\leq 4[W]_{A_2}.
\end{equation}

As a consequence of the Banach space -valued convex body domination from Example \ref{ex:CZO-Banach}, we obtain:

\begin{theorem}\label{thm:L2WE}
Let $E$ be a Banach space, and $T\in\bddlin(L^2(\R^d;E))$ be a Dini--Calder\'on--Zygmund operator with $\bddlin(E)$-valued kernel. For any $W\in A_2(\R^d;\R^n)$, the operator $T$ extends boundedly to $L^2(W;E^n)$ and satisfies
\begin{equation*}
  \Norm{T}{\bddlin(L^2(W;E^n))}\leq c_{n,T}([W]_{A_2}[W]_{A_\infty}[W^{-1}]_{A_\infty})^{1/2}\leq c_{n,T}[W]_{A_2}^{3/2}.
\end{equation*}
\end{theorem}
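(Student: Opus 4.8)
The plan is to follow the standard two-step structure of sparse domination: first invoke the Banach space -valued convex body domination of $T$ provided by Example~\ref{ex:CZO-Banach}, and then estimate the resulting sparse convex body form against the matrix-weighted norms. The content of the second step will be that it reduces---with no loss beyond the dimensional constants already present---to the purely Euclidean matrix-weight estimate from the scalar theory.

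First I would pass from the operator norm to a bilinear estimate. Since $T$ is bounded on $L^2(\R^d;E)$, it acts componentwise and boundedly on $L^2(\R^d;E^n)$, so $T\vec f$ is well defined; and by vector-valued duality---pairing $L^2(\R^d;E^n)$ with $L^2(\R^d;(E^*)^n)$ through $\pair{\vec u}{\vec\xi}=\int\sum_{i=1}^n\pair{u_i(x)}{\xi_i(x)}\ud x$ and using the self-adjointness identity $\pair{W^{1/2}\vec u}{\vec\xi}=\pair{\vec u}{W^{1/2}\vec\xi}$---the bound $\Norm{T}{\bddlin(L^2(W;E^n))}\leq c$ is equivalent to
\[
  \abs{\pair{T\vec f}{\vec g}}\leq c\,\Norm{\vec f}{L^2(W;E^n)}\Norm{\vec g}{L^2(W^{-1};(E^*)^n)}
\]
for all $\vec f,\vec g$ in a dense class, say $\vec f\in L^\infty_c(\R^d;E)^n$ and $\vec g\in L^\infty_c(\R^d;E^*)^n$. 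By Example~\ref{ex:CZO-Banach}, the form $t(f,g)=\pair{Tf}{g}$ satisfies the $(\avL^1(3Q;E),\avL^1(Q;E^*))$ convex body domination, so for each such pair there is a $(1-\eps_n)$-sparse $\mathscr S\subset\mathscr D$ with
\[
  \abs{\pair{T\vec f}{\vec g}}\leq c_n\sum_{Q\in\mathscr S}\abs{Q}\,\cave{\vec f}_{\avL^1(3Q;E)}\cdot\cave{\vec g}_{\avL^1(Q;E^*)},
\]
and everything reduces to bounding this sparse convex body form.

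The crux is that this $E$-valued sparse convex body form is dominated by a scalar, matrix-weighted sparse form. By the description of the convex bodies, $\cave{\vec f}_{\avL^1(3Q;E)}=\{\fint_{3Q}\vec F^\psi\ud x:\Norm{\psi}{L^\infty(3Q;E^*)}\leq1\}$ with $\vec F^\psi:=(\pair{f_i}{\psi})_{i=1}^n\colon 3Q\to\R^n$, and likewise $\cave{\vec g}_{\avL^1(Q;E^*)}=\{\fint_{Q}\vec G^{\psi'}:\Norm{\psi'}{L^\infty(Q;E^{**})}\leq1\}$ with $\vec G^{\psi'}:=(\pair{g_i}{\psi'})_{i=1}^n$. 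Since for $\Norm{\psi(x)}{E^*}\leq1$ the $i$-th component of $W^{1/2}(x)\vec F^\psi(x)\in\R^n$ equals $\pair{(W^{1/2}\vec f)_i(x)}{\psi(x)}$, one has the pointwise bound $\abs{W^{1/2}(x)\vec F^\psi(x)}\leq\Norm{(W^{1/2}\vec f)(x)}{E^n}=:h_1(x)$ (with the $\ell^2$-norm on $E^n$), and analogously $\abs{W^{-1/2}(x)\vec G^{\psi'}(x)}\leq\Norm{(W^{-1/2}\vec g)(x)}{(E^*)^n}=:h_2(x)$. Inserting the reducing matrix $\mathcal W_Q:=\ave{W}_Q^{1/2}$, this yields for every $Q$ and all admissible $\psi,\psi'$
\[
  \Big(\fint_{3Q}\vec F^\psi\Big)\cdot\Big(\fint_Q\vec G^{\psi'}\Big)
  =\Big(\mathcal W_Q\fint_{3Q}\vec F^\psi\Big)\cdot\Big(\mathcal W_Q^{-1}\fint_Q\vec G^{\psi'}\Big)
  \leq\Big(\fint_{3Q}\abs{\mathcal W_QW^{-1/2}(x)}\,h_1(x)\ud x\Big)\Big(\fint_{Q}\abs{\mathcal W_Q^{-1}W^{1/2}(x)}\,h_2(x)\ud x\Big),
\]
and taking the supremum over $\psi,\psi'$ on the left produces $\cave{\vec f}_{\avL^1(3Q;E)}\cdot\cave{\vec g}_{\avL^1(Q;E^*)}$. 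The only place where $E$ intervenes is the pointwise bound $\abs{W^{1/2}(x)\vec F^\psi(x)}\leq h_1(x)$; beyond it, $\mathcal W_QW^{-1/2}(x)$ is an ordinary $\R^{n\times n}$ matrix and the manipulation is identical to the scalar convex body calculation. Summing over $\mathscr S$, the right-hand side is precisely the scalar matrix-weighted sparse form estimated in the matrix $A_2$ literature, e.g.\ \cite{NPTV:convex}: the Christ--Goldberg-type maximal operators $h\mapsto\sup_{Q\owns x}\fint_{3Q}\abs{\mathcal W_QW^{-1/2}}h$ and its $W^{-1}$-counterpart are $L^2(\R^d)$-bounded with constants controlled by $[W]_{A_2}^{1/2}$ and, respectively, $[W]_{A_\infty}$ and $[W^{-1}]_{A_\infty}$, and a Carleson-embedding argument, using the $A_2$ balance $\abs{\ave{W}_Q^{1/2}\ave{W^{-1}}_Q^{1/2}}\leq[W]_{A_2}^{1/2}$, combines them to give
\[
  \sum_{Q\in\mathscr S}\abs{Q}\,\cave{\vec f}_{\avL^1(3Q;E)}\cdot\cave{\vec g}_{\avL^1(Q;E^*)}
  \leq c_n\big([W]_{A_2}[W]_{A_\infty}[W^{-1}]_{A_\infty}\big)^{1/2}\Norm{h_1}{L^2(\R^d)}\Norm{h_2}{L^2(\R^d)}.
\]
Because $\Norm{h_1}{L^2(\R^d)}=\Norm{\vec f}{L^2(W;E^n)}$ and $\Norm{h_2}{L^2(\R^d)}=\Norm{\vec g}{L^2(W^{-1};(E^*)^n)}$, this is the first inequality of the theorem; the second follows from $[W]_{A_\infty},[W^{-1}]_{A_\infty}\leq4[W]_{A_2}$, which is \eqref{eq:Ainfty-vs-A2} (together with $[W^{-1}]_{A_2}=[W]_{A_2}$).

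I expect the work to be bookkeeping rather than conceptual, precisely because the reduction above loses nothing beyond the $n^{3/2}$ already present in the convex body domination. The technical points are routine: the $3Q$-versus-$Q$ mismatch between the two bodies is handled as usual by working with the Christ--Goldberg maximal function over $3Q$ (or by a harmless enlargement of the sparse collection); the description of the right end-point of $\cave{\vec f}_{\avL^1(3Q;E)}$ used above, and the duality $L^2(W;E^n)^{*}\supseteq L^2(W^{-1};(E^*)^n)$, carry the standard measurable-selection caveats, which are harmless when $E$ is separable and otherwise circumvented by testing against a norming subset of the dual ball; and one keeps the $n$-dependence of all constants explicit. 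The genuinely hard analytic input---the sharp $L^2$ bounds for the Christ--Goldberg maximal functions and the Carleson embedding with $A_\infty$-refined constants---is not new; it is exactly the scalar matrix $A_2$ machinery, and the substance of the theorem is that the Banach space -valued situation is subsumed by it, because the convex bodies produced by the domination are $\R^n$-valued.
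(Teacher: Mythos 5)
Your proposal is correct and follows essentially the same route as the paper: duality reduces the claim to a bilinear estimate, the convex body domination of Example \ref{ex:CZO-Banach} is invoked, and the key observation that the convex bodies are $\R^n$-valued is used to dominate the sparse convex-body form by a scalar matrix-weighted form acting on the scalar functions $\Norm{(W^{1/2}\vec f)(\cdot)}{E^n}$ and $\Norm{(W^{-1/2}\vec g)(\cdot)}{(E^*)^n}$. The only (cosmetic) difference is that you insert reducing matrices $\ave{W}_Q^{1/2}$ and appeal to Christ--Goldberg maximal bounds plus a Carleson embedding, whereas the paper packages exactly this scalar input as the operator $\tilde L$ of \cite[(5.8)]{NPTV:convex} and cites \cite[Lemma 5.6]{NPTV:convex} as a black box.
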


Note that Theorem \ref{thm:L2WE} applies to a general Banach space $E$, but contains the (difficult) a priori boundedness hypothesis that $T\in\bddlin(L^2(\R^d;E))$. Concrete examples are available in the class of UMD spaces, treated in detail in \cite{HNVW}.

\begin{corollary}\label{cor:L2WE}
Let $E$ be a UMD space, and $T\in\bddlin(L^2(\R^d))$ be a scalar-valued Calder\'on--Zygmund operator with a H\"older-type modulus of continuity $\omega(t)=c t^\delta$, $\delta\in(0,1]$ in \eqref{eq:CZomega}. For any $W\in A_2(\R^d;\R^n)$, the operator $T$ extends boundedly to $L^2(W;E^n)$ and satisfies
\begin{equation*}
  \Norm{T}{\bddlin(L^2(W;E^n))}\leq c_{n,E,T}([W]_{A_2}[W]_{A_\infty}[W^{-1}]_{A_\infty})^{1/2}\leq c_{n,E,T}[W]_{A_2}^{3/2}.
\end{equation*}
In particular, this estimate holds when $T$ is the classical Hilbert transform.
\end{corollary}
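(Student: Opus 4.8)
The plan is to deduce Corollary \ref{cor:L2WE} from Theorem \ref{thm:L2WE} by verifying its sole nontrivial hypothesis: that the given scalar-valued Calder\'on--Zygmund operator $T$, when regarded as acting on $E^n$-valued (equivalently, $E$-valued) functions via $T\otimes\mathrm{id}_E$, extends to a bounded operator on the Bochner space $L^2(\R^d;E)$. Once this is in hand, the kernel of $T\otimes\mathrm{id}_E$ is $K(x,y)\cdot\mathrm{id}_E\in\bddlin(E)$, whose operator norm equals $|K(x,y)|$, so it inherits verbatim the size bound $\Norm{K(x,y)\,\mathrm{id}_E}{\bddlin(E)}\leq c|x-y|^{-d}$ and the Dini/H\"older smoothness estimate \eqref{eq:CZomega}. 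Thus $T\otimes\mathrm{id}_E$ is a Dini--Calder\'on--Zygmund operator with $\bddlin(E)$-valued kernel in the sense of Theorem \ref{thm:L2WE}, and the theorem applies directly to yield the stated weighted bound on $L^2(W;E^n)$, with the same constant structure $([W]_{A_2}[W]_{A_\infty}[W^{-1}]_{A_\infty})^{1/2}\leq[W]_{A_2}^{3/2}$, the dependence on $E$ entering only through the dimensional/UMD constant in the a priori bound.

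The crucial input is therefore the classical theorem that a Calder\'on--Zygmund operator extends boundedly to $L^2(\R^d;E)$ whenever $E$ is a UMD space. I would invoke this from the literature rather than reprove it: it is part of the standard vector-valued Calder\'on--Zygmund theory and is precisely the content developed in \cite{HNVW}, which the paper has already cited as the reference for UMD spaces. The cleanest route is to note that a scalar CZ operator with H\"older kernel is, up to the explicitly known bounded terms, built from (or dominated/represented via) the Hilbert transform and its higher-dimensional analogues such as Riesz transforms, or more directly to quote the general statement that any $L^2(\R^d)$-bounded operator with a standard (e.g. H\"older) Calder\'on--Zygmund kernel tensors to a bounded operator on $L^2(\R^d;E)$ when $E$ is UMD. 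For the Hilbert transform specifically, this is one of the defining characterizations of the UMD property (Burkholder--Bourgain), so the ``in particular'' clause is immediate once the general statement is established.

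The main obstacle is essentially bookkeeping rather than analysis: one must make sure that the hypotheses of the cited vector-valued boundedness result match exactly what is assumed here, namely scalar $L^2$-boundedness plus a H\"older-type kernel, and that the resulting operator-valued kernel $K(x,y)\mathrm{id}_E$ genuinely satisfies the precise smoothness hypothesis \eqref{eq:CZomega} used in Theorem \ref{thm:L2WE} (it does, since scalar and operator-norm estimates coincide for scalar multiples of the identity, and the H\"older modulus $\omega(t)=ct^\delta$ is certainly Dini). A minor additional point worth a sentence is that the conclusion of Theorem \ref{thm:L2WE} is an extension to $L^2(W;E^n)$ of an operator initially defined on a dense class; one should remark that the a priori $L^2(\R^d;E)$-boundedness guarantees this extension is the genuine one (agreeing with $T$ on nice functions), so no ambiguity arises. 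After these checks, the corollary follows by a direct application of Theorem \ref{thm:L2WE}.
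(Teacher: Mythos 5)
Your proposal is correct and follows essentially the same route as the paper: the entire content of the corollary is to verify the a priori hypothesis $T\in\bddlin(L^2(\R^d;E))$ of Theorem \ref{thm:L2WE}, after which the scalar kernel times $\operatorname{id}_E$ trivially satisfies the operator-valued kernel bounds and the weighted estimate (including the Hilbert transform case) follows at once. The paper simply makes your appeal to the literature precise: it invokes the easy half of the David--Journ\'e $T(1)$ theorem (yielding the weak boundedness property and $T(1),T^*(1)\in\BMO(\R^d)$) together with Figiel's UMD-valued $T(1)$ theorem, which is exactly the ``general statement'' you propose to quote.
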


\begin{proof}
We reduce Corollary \ref{cor:L2WE} to Theorem \ref{thm:L2WE} with the help of the $T(1)$ theorem of David and Journ\'e \cite{DJ:T1}, and its extension to UMD spaces by Figiel \cite{Figiel:T1}. By the (easy half of) the David--Journ\'e theorem, the assumptions on $T$ imply that that $T$ satisfies the so-called weak boundedness property as well as $T(1),T^*(1)\in\BMO(\R^d)$. Then, by Figiel's theorem, an operator satisfying these conditions and the Calder\'on--Zygmund kernel assumptions extends boundedly to $L^2(\R^d;E)$, for any UMD space $E$. Thus $T$ satisfies the assumptions, and hence the conclusions, of Theorem \ref{thm:L2WE}, and we are done.
\end{proof}

These results, even just for the Hilbert transform, and even in their qualitative form (i.e., just concluding the boundedness of $T$, without specifying any concrete bound for the norm), are completely new in the combined setting of matrix weights and Banach spaces. For $E=\R$ and the Hilbert transform $T$, the qualitative form of Corollary \ref{cor:L2WE} is due to Treil and Volberg \cite{TV:angle}. The quantitative form for $E=\R$ was obtained by Nazarov et al.\ \cite{NPTV:convex}, and this is the best that is known at the time of writing. For scalar-weights, the power $3/2$ can be replaced by $1$ \cite{Hytonen:A2}, and the product of $[W]_{A_\infty}$ and $[W^{-1}]_{A_\infty}$ by their sum \cite{HytPer}, but extending these to the general matrix case consists of the outstanding open ``matrix $A_2$ conjecture''.

Turning to the proof of Theorem \ref{thm:L2WE}, we begin with:

\begin{remark}[Without loss of generality, we assume that $E$ is reflexive]\label{rem:RNP}
Since Theorem \ref{thm:L2WE} is about the bounded extension of an operator, it suffices to prove an a priori estimate on a dense subspace of functions $\vec f$. In particular, we can assume that each component $f_i$ takes its values in a finite-dimensional subspace of $E$. Since any finite-dimensional space is reflexive, {\em we make the standing assumption, without loss of generality, that $E$ is reflexive}. (Note that this is automatic in Corollary \ref{cor:L2WE} in any case, since UMD spaces are reflexive \cite[Theorem 4.3.3]{HNVW}.) Under this assumption, we have $L^1(Q;E)^*=L^\infty(Q;E^*)$ (see \cite[Theorems 1.3.10 and 1.3.21]{HNVW}), which is convenient in view of calculations involving the convex bodies $\cave{\ }_{\avL^1(Q;E)}$.
\end{remark}

\begin{lemma}
\begin{equation*}
\begin{split}
  \abs{Q} &\cave{W^{1/2}\vec f}_{\avL^1(3Q;E)}\cdot \cave{V^{1/2}\vec g}_{\avL^1(Q;E^*)} \\
  &\leq \int \Big(1_Q(x)\fint_{3Q}\abs{V^{1/2}(x)W^{1/2}(y)}\Norm{\vec f(y)}{E^n}\ud y\Big)\Norm{\vec g(x)}{\vec E^{*n}}\ud x
\end{split}
\end{equation*}
\end{lemma}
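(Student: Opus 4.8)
The plan is to unwind both set-valued norms through the Hahn--Banach duality that defines them, and then match each side against the integral on the right. Fix $x^*\in \bar B_{(\avL^1(3Q;E))^*}$ and $y^*\in\bar B_{(\avL^1(Q;E^*))^*}$, so that a generic element of the left-hand product $\cave{W^{1/2}\vec f}_{\avL^1(3Q;E)}\cdot\cave{V^{1/2}\vec g}_{\avL^1(Q;E^*)}$ is $\sum_{i=1}^n\pair{(W^{1/2}\vec f)_i}{x^*}\pair{(V^{1/2}\vec g)_i}{y^*}$. Using the reflexivity reduction of Remark \ref{rem:RNP}, I would represent these functionals concretely: $x^*$ is given by integration against some $\Phi\in L^\infty(3Q;E^*)$ with $\Norm{\Phi}{L^\infty(3Q;E^*)}\le |3Q|$ coming from the normalisation $\avL^1$, and similarly $y^*$ by some $\Psi\in L^\infty(Q;E^{**})=L^\infty(Q;E)$ with the analogous bound. (Here I would be a little careful about whether to carry the normalising constants in $\Phi,\Psi$ or to pull them out; keeping $\avL^1$ notation, the cleanest is that $x^*$ acts on $h\in L^1(3Q;E)$ by $\frac{1}{|3Q|}\int_{3Q}\pair{h}{\Phi}$ with $\Norm{\Phi}{\infty}\le 1$.)

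Next I would substitute the definition of matrix multiplication on $E^n$, namely $(W^{1/2}\vec f)_i(y)=\sum_{j}(W^{1/2}(y))_{ij}f_j(y)$ and likewise for $V^{1/2}\vec g$, and interchange the finite sums with the two integrals. This turns the generic left-hand quantity into a double integral over $(x,y)\in Q\times 3Q$ of a quadratic-in-the-matrices expression of the shape $\sum_{i,j,k}(V^{1/2}(x))_{ik}\pair{g_k(x)}{\Psi_i(x)}\cdot(W^{1/2}(y))_{ij}\pair{f_j(y)}{\Phi_i(y)}$, against $\frac{1}{|Q|\,|3Q|}$. The point is then purely linear-algebraic: for each fixed $(x,y)$, estimate this finite sum by $|V^{1/2}(x)W^{1/2}(y)|\,\Norm{\vec f(y)}{E^n}\Norm{\vec g(x)}{E^{*n}}$. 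I would do this by pairing up the indices: the vector $\vec u\in\R^n$ with $u_i=\pair{\cdot}{\Phi_i(y)}$-type entries has $\ell^2$ norm at most $\Norm{\vec f(y)}{E^n}$ when contracted the right way, the analogous $\vec v$ for $g,\Psi$ has norm at most $\Norm{\vec g(x)}{E^{*n}}$, and what sits between them is $\vec v^{t}V^{1/2}(x)W^{1/2}(y)\vec u$, bounded by the operator norm $|V^{1/2}(x)W^{1/2}(y)|$ times $|\vec u|\,|\vec v|$; the unit bounds on $\Phi,\Psi$ are exactly what make $|\vec u|\le\Norm{\vec f(y)}{E^n}$ and $|\vec v|\le\Norm{\vec g(x)}{E^{*n}}$ hold pointwise (using the duality $\Norm{f_j(y)}{E}=\sup_{\Norm{\phi}{E^*}\le1}\pair{f_j(y)}{\phi}$ componentwise, summed in $\ell^2$ via Cauchy--Schwarz). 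After multiplying the $\frac{1}{|Q|\,|3Q|}$ back in, and recalling $\fint_{3Q}=\frac1{|3Q|}\int_{3Q}$ and $|3Q|/|Q|=3^d$ — wait, actually the statement has $|Q|$ on the left and $\fint_{3Q}$ on the right, so the $|Q|$ cancels the $\frac1{|Q|}$ and the $\fint_{3Q}$ absorbs the $\frac1{|3Q|}$ with no stray constant — one arrives exactly at the claimed integral. Taking the supremum over all admissible $x^*,y^*$ (equivalently over $\Phi,\Psi$ in their unit balls) on the left while the right-hand side does not depend on them finishes the proof.

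The main obstacle I anticipate is bookkeeping rather than conceptual: correctly identifying the dual spaces and the normalisations. One must invoke the reflexivity assumption of Remark \ref{rem:RNP} to get $L^1(Q;E)^*=L^\infty(Q;E^*)$ (and, for the $\avL^1(Q;E^*)$ factor, $L^1(Q;E^*)^*=L^\infty(Q;E^{**})$ and then $E^{**}=E$), and one must be consistent about where the factors $|Q|$, $|3Q|$ live so that the final constant is genuinely $1$ and not $3^d$ or $3^{-d}$. A secondary point of care is the pointwise linear-algebra step: it is really the statement that for a matrix $M=V^{1/2}(x)W^{1/2}(y)$ and vectors whose entries are scalar pairings, $|\vec v^{t}M\vec u|\le|M|\,|\vec u|_{\ell^2}|\vec v|_{\ell^2}$, combined with the elementary fact that the Euclidean norm of the vector $(\pair{f_j(y)}{\Phi_i(y)})_j$ (for fixed $i$, or after the appropriate contraction) is controlled by $\Norm{\vec f(y)}{E^n}$ when $\Norm{\Phi(y)}{E^{*n}}\le 1$ in the matching $\ell^2$-of-$E^*$ sense — here one should make sure the norm on $E^n$ and on its dual are taken as the $\ell^2$-combinations so that the Cauchy--Schwarz inequalities line up; the paper's set-valued norm $\cave{\ }$ is built precisely from the $\bar B_{X^*}$ with $X=\avL^1(Q;E)$, so these choices are forced and consistent. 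Everything else is interchange of finite sums and integrals, which is unproblematic since $\vec f,\vec g\in L^\infty_c$ and the cubes are bounded.
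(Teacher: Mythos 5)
Your proposal is correct and follows essentially the same route as the paper's proof: represent the two dual functionals concretely as $\phi\in\bar B_{L^\infty(3Q;E^*)}$ and $\psi\in\bar B_{L^\infty(Q;E)}$ (the latter via reflexivity from Remark \ref{rem:RNP}), write the generic element of the Minkowski dot product as a double integral over $Q\times 3Q$, and bound the integrand pointwise by $\abs{V^{1/2}(x)W^{1/2}(y)}\,\Norm{\vec f(y)}{E^n}\Norm{\vec g(x)}{E^{*n}}$, with the normalisations $\abs{Q}\fint_Q=\int_Q$ leaving no stray constant. Only cosmetic slips: your first normalisation guess ($\Norm{\Phi}{\infty}\leq\abs{3Q}$) is off, but you immediately settle on the correct averaged formulation, and the subscripts $\Phi_i,\Psi_i$ should just be $\Phi,\Psi$ since a single functional acts on all $n$ components.
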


\begin{proof}
Under the standing assumption from Remark \ref{rem:RNP}, we evaluate consider a generic element of the convex body on the left with $\phi\in\bar B_{L^\infty(Q;E^*)}$ and $\psi\in\bar B_{L^\infty(Q;E)}$:
\begin{equation*}
\begin{split}
  &\abs{Q}\Babs{\fint_{3Q} W^{1/2}(y)\pair{\vec f(y)}{\phi(y)}\ud y\cdot \fint_Q V^{1/2}(x)\pair{\vec g(x)}{\psi(x)}\ud x} \\
  &=\abs{Q}\Babs{\fint_Q\fint_{3Q} V^{1/2}(x)W^{1/2}(y)\pair{\vec f(y)}{\phi(y)}\cdot\pair{\vec g(x)}{\psi(x)}\ud y\ud x} \\
  &\leq\int_Q\fint_{3Q} \abs{V^{1/2}(x)W^{1/2}(y)}\Norm{\vec f(y)}{E^n}\Norm{\vec g(x)}{\vec E^{*n}}\ud y\ud x.\qedhere
\end{split}
\end{equation*}
\end{proof}

Summing over a sparse collection, we obtain
\begin{equation}\label{eq:cbd-vs-L}
\begin{split}
   &\sum_{Q\in\mathscr S} \abs{Q} \cave{W^{1/2}\vec f}_{\avL^1(3Q;E)}\cdot \cave{V^{1/2}\vec g}_{\avL^1(Q;E^*)}  \\
   &\leq \int\Big(\sum_{Q\in\mathscr S}1_Q(x)\fint_{3Q}\abs{V^{1/2}(x)W^{1/2}(y)}\Norm{\vec f(y)}{E^n}\ud y\Big)\Norm{\vec g(x)}{\vec E^{*n}}\ud x \\
   &=:\int \tilde L(\Norm{\vec f}{E^n})(x)\Norm{\vec g(x)}{\vec E^{*n}}\ud x,
\end{split}
\end{equation}
where $\tilde L$, here acting on the scalar-valued function $y\mapsto\Norm{\vec f(y)}{E^n}$, is an operator denoted by the same symbol in \cite[(5.8)]{NPTV:convex}. By \cite[Lemma 5.6]{NPTV:convex}, we have
\begin{equation}\label{eq:NPTV-L}
  \Norm{\tilde L}{\bddlin(L^2)}\leq C([W,V]_{A_2}[W]_{A_\infty}[V]_{A_\infty})^{1/2}.
\end{equation}

By duality and standard changes of variables, which present no essential difference in the Banach space -valued setting, an estimate of the form
\begin{equation*}
  \Norm{T\vec f}{L^2(V;E^n)}\leq N\Norm{\vec f}{L^2(V;E^n)}
\end{equation*}
is equivalent to
\begin{equation}\label{eq:dualBound2prove}
  \pair{T(W^{1/2}\vec f)}{V^{1/2}\vec g}\leq N\Norm{\vec f}{L^2(\R^d;E^n)}\Norm{\vec g}{L^2(\R^d;E^{*n})}.
\end{equation}
If $T$ is an in Theorem \ref{thm:L2WE}, it satisfies the $(\avL^1(3Q;E),\avL^1(Q;E^*))$ convex body domination by Example \ref{ex:CZO-Banach}, which means that the left-hand side of \eqref{eq:dualBound2prove} is dominated by the left-hand side of \eqref{eq:cbd-vs-L}, and hence, by \eqref{eq:cbd-vs-L} and \eqref{eq:NPTV-L}, we have
\begin{equation*}
  N\leq c_{n,T}([W,V]_{A_2}[W]_{A_\infty}[V]_{A_\infty})^{1/2}.
\end{equation*}
This is the desired bound, and concludes the proof of Theorem \ref{thm:L2WE}.

\section{Convex domination and generalised commutators}\label{sec:commu}

For an operator $T$ and two vector functions $\vec a=(a_1,\ldots,a_n)$ and $\vec b=(b_1,\ldots,b_n)$, let us consider the operator
\begin{equation*}
  \vec a\cdot T\vec b:f\mapsto\vec a\cdot T(\vec b f)=\sum_{i=1}^n a_i T(b_i f).
\end{equation*}
We are mainly interested in the boundedness on $L^p(\R^d)$, or a weighted $L^p(w)$, or between two such spaces, and the case when $T$ is a singular integral operator bounded on the space. However, we do not require that $a_i,b_i\in L^\infty(\R^d)$, and hence the pointwise multipliers $f\mapsto b_i f$ and $g\mapsto a_i g$, and the compositions $f\mapsto a_iT(b_i f)$, may be unbounded operators. Nevertheless, their sum $\vec a\cdot T\vec b$ may still be bounded, thanks to cancellation between different terms.

A case that has been much studied in the literature consists of $\vec b=(1,b)$ and $\vec a=(b,-1)$, in which case
\begin{equation*}
  \vec a\cdot T(\vec b f)= bTf-T(bf)=[b,T]f
\end{equation*}
is the {\em commutator} of $b$ and $T$, whose $L^p(\R^d)$-boundedness is characterised by $b\in\BMO(\R^d)$, the space of functions of bounded mean oscillation, which is strictly larger than $L^\infty(\R^d)$, and contains in particular functions like $b(x)=\log\abs{x}$.

By dualising with a function $g$, and denoting by $t(f,g)=\pair{Tf}{g}$ the bilinear form of $T$, we arrive at
\begin{equation*}
  \pair{\vec a\cdot T(\vec b f)}{g}=\sum_{i=1}^n\pair{T(b_i f)}{a_i g}
  =t(\vec b f,\vec a g),
\end{equation*}
where the action of the bilinear form is extended to vector-valued functions as before. To be precise, if $t$ in defined on $F\times G$, we should now require that
\begin{equation*}
  f\in F_{\vec b}:=\{f\in F: b_i f\in F\text{ for all }i=1,\ldots,n\},
\end{equation*}
and $g\in G_{\vec a}$, defined similarly. If $F\supseteq L^\infty_c(\R^d)$, then clearly $F_{\vec b}$ contains in particular all $f\in L^\infty_c(\R^d)$ with $\supp f\subseteq E_N:=\{\abs{\vec b}\leq N\}$ for any $N\in\N$. For a.e.\ finite-valued $b_i$, the union $\bigcup_{N\in\N}E_N$ covers $\R^d$ up to a null set, it is immediate that $F_{\vec b}$ is dense in any $L^p(w)$ with finite $p$.

\begin{lemma}\label{lem:cbd-implies}
Suppose that $T$ satisfies the $(X(Q),Y(Q))$ convex body domination. Then for all relevant functions, we have
\begin{equation}\label{eq:cbd-implies}
  \abs{\pair{\vec{a}\cdot T(\vec{b}f)}{g}}
  \leq C\sum_{Q\in\mathscr S}\abs{Q}\cave{\vec b f}_{X(Q)}\cdot\cave{\vec a g}_{Y(Q)}.
\end{equation}
\end{lemma}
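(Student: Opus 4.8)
The plan is to unravel the definition of the $(X(Q),Y(Q))$ convex body domination and apply it directly to the bilinear form $t(\phi,\psi):=\pair{T\phi}{\psi}$ evaluated at the specific arguments $\phi=\vec b f$ and $\psi=\vec a g$. The key observation is that, as computed just before the statement of the lemma, $\pair{\vec a\cdot T(\vec b f)}{g}=t(\vec b f,\vec a g)$, where on the right-hand side $t$ is the bilinear form of $T$ extended to vector-valued functions in the manner of Section~3. So the left-hand side of \eqref{eq:cbd-implies} is nothing but $\abs{t(\vec b f,\vec a g)}$, and we are simply applying the convex body domination of $t$ to the pair $(\vec b f,\vec a g)\in F^n\times G^n$.

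First I would check that the pair $(\vec b f,\vec a g)$ is a legitimate argument for the convex body domination, i.e., that $\vec b f\in F^n$ and $\vec a g\in G^n$. This is precisely the role of the ``relevant functions'' qualifier: for $f\in F_{\vec b}$ we have $b_i f\in F$ for each $i$, hence $\vec b f=(b_i f)_{i=1}^n\in F^n$, and symmetrically $\vec a g=(a_i g)_{i=1}^n\in G^n$ for $g\in G_{\vec a}$. Once this is in place, Definition~\ref{def:cbd} (applied at the order $n$ given by the length of the vectors $\vec a,\vec b$) furnishes a $\delta_n$-sparse collection $\mathscr S\subset\mathscr D$, depending on the pair $(\vec b f,\vec a g)$, such that
\begin{equation*}
  \abs{t(\vec b f,\vec a g)}\leq C_n\sum_{Q\in\mathscr S}\abs{Q}\cave{\vec b f}_{X(Q)}\cdot\cave{\vec a g}_{Y(Q)},
\end{equation*}
which is exactly \eqref{eq:cbd-implies} with $C=C_n$. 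There is essentially nothing left to prove beyond this substitution.

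There is no real obstacle here; the lemma is a direct translation of the hypothesis through the identity $\pair{\vec a\cdot T(\vec b f)}{g}=t(\vec b f,\vec a g)$, and its only purpose is to record this reformulation in a convenient form for the subsequent analysis of generalised commutators, where the convex bodies $\cave{\vec b f}_{X(Q)}$ and $\cave{\vec a g}_{Y(Q)}$ will be estimated in terms of joint oscillation conditions on $(\vec a,\vec b)$. If anything merits a sentence of care, it is the bookkeeping that the order-$n$ convex body domination is being invoked with $n$ equal to the common length of $\vec a$ and $\vec b$, and the remark that the sparse family $\mathscr S$ is allowed to depend on the particular triple $(f,g,\vec a,\vec b)$ under consideration, which is harmless for the intended applications.
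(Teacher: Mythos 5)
Your proposal is correct and is exactly the paper's argument: the paper's proof consists of the single sentence that the claim is immediate by applying Definition~\ref{def:cbd} with $\vec f=\vec b f$ and $\vec g=\vec a g$, via the identity $\pair{\vec a\cdot T(\vec b f)}{g}=t(\vec b f,\vec a g)$ noted just before the lemma. Your additional remarks about the ``relevant functions'' ensuring $\vec b f\in F^n$, $\vec a g\in G^n$ and about $\mathscr S$ depending on the data are accurate but not needed beyond what the paper records.
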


\begin{proof}
This is immediate by applying definition to $\vec f=\vec b f$ and $\vec g=\vec a g$.
\end{proof}

We take a closer look at the case when $X(Q)=Y(Q)=\avL^1(\gamma Q)$.

\begin{lemma}\label{lem:cbdL1-further}
For all $s,t\in(1,\infty)$ and all functions in the relevant spaces, we have
\begin{equation*}
  \cave{\vec b f}_{\avL^1(Q)}\cdot\cave{\vec a g}_{\avL^1(Q)}
  \leq\Norm{(x,y)\mapsto\vec a(x)\cdot\vec b(y)}{\avL^{(s,t)}_{\min}(Q\times Q)}
    \Norm{f}{\avL^{t'}(Q)}\Norm{g}{\avL^{s'}(Q)},
\end{equation*}
where
\begin{equation*}
  \Norm{F}{\avL^{(s,t)}_{\min}(Q\times Q)}
  :=\begin{cases} \Big(\fint_Q\Big[\fint_Q\abs{F(x,y)}^s\ud x\Big]^{t/s}\ud y\Big)^{1/t}, & \text{if }s\leq t, \\
   \Big(\fint_Q\Big[\fint_Q\abs{F(x,y)}^t\ud y\Big]^{s/t}\ud x\Big)^{1/s}, & \text{if }t\leq s. \end{cases} 
\end{equation*}
\end{lemma}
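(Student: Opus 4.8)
The plan is to unfold both convex bodies into concrete integral/pairing expressions, apply H\"older's inequality in the two variables separately, and recognize the resulting mixed-norm quantity. Recall that, under the reflexivity reduction, a generic element of $\cave{\vec b f}_{\avL^1(Q)}$ is of the form $\fint_Q \vec b(y)f(y)\phi(y)\ud y$ with $\Norm{\phi}{L^\infty(Q)}\le 1$, and similarly an element of $\cave{\vec a g}_{\avL^1(Q)}$ is $\fint_Q\vec a(x)g(x)\psi(x)\ud x$ with $\Norm{\psi}{L^\infty(Q)}\le 1$; by definition of the Minkowski dot product, $\cave{\vec b f}_{\avL^1(Q)}\cdot\cave{\vec a g}_{\avL^1(Q)}$ is the supremum over such $\phi,\psi$ of
\begin{equation*}
  \Babs{\fint_Q\fint_Q \vec a(x)\cdot\vec b(y)\, f(y)g(x)\phi(y)\psi(x)\ud y\ud x}.
\end{equation*}

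\textbf{Main step.} Pull the absolute value inside and discard $\phi,\psi$ (bounded by $1$), leaving $\fint_Q\fint_Q \abs{\vec a(x)\cdot\vec b(y)}\,\abs{f(y)}\,\abs{g(x)}\ud y\ud x$. Now apply H\"older's inequality, treating this as an iterated integral. Suppose first $s\le t$. Write the inner $y$-integral as $\fint_Q \big(\abs{\vec a(x)\cdot\vec b(y)}\abs{g(x)}\big)\abs{f(y)}\ud y$ and apply H\"older in $y$ with exponents $t$ and $t'$; this is then integrated in $x$, where one applies H\"older with exponents $s$ and $s'$, pairing the $L^t_y$-norm of $\abs{\vec a(x)\cdot\vec b(y)}$ (times $\abs{g(x)}$) against $\abs{g}$... more carefully: first H\"older in $y$ gives
\begin{equation*}
  \fint_Q \Big[\fint_Q \abs{\vec a(x)\cdot\vec b(y)}^t\abs{g(x)}^t\ud y\Big]^{1/t}\Norm{f}{\avL^{t'}(Q)}\abs{g(x)}^{0}\,\ud x,
\end{equation*}
wait---the $\abs{g(x)}$ should not be inside the $y$-integral. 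Let me restate: bound $\abs{f(y)}$ via H\"older in $y$ to get $\fint_Q h(x,y)\abs{f(y)}\ud y \le \big(\fint_Q h(x,y)^t\ud y\big)^{1/t}\Norm{f}{\avL^{t'}(Q)}$ with $h(x,y)=\abs{\vec a(x)\cdot\vec b(y)}\abs{g(x)}$, so $h(x,y)^t = \abs{\vec a(x)\cdot\vec b(y)}^t\abs{g(x)}^t$ and the bracket factors as $\abs{g(x)}\big(\fint_Q\abs{\vec a(x)\cdot\vec b(y)}^t\ud y\big)^{1/t}$. Then apply H\"older in $x$ with exponents $s,s'$ to the product of $\abs{g(x)}$ and $\big(\fint_Q\abs{\vec a(x)\cdot\vec b(y)}^t\ud y\big)^{1/t}$: the $\abs{g(x)}$ contributes $\Norm{g}{\avL^{s'}(Q)}$, and the other factor raised to the $s$-th power and averaged in $x$ is exactly $\Norm{(x,y)\mapsto\vec a(x)\cdot\vec b(y)}{\avL^{(s,t)}_{\min}(Q\times Q)}^s$ to the $1/s$, i.e.\ matches the $s\le t$ case of the definition (note the roles: the $x$-integral of the $t$-power in $y$). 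Wait---the definition has, for $s\le t$, the $x$-integral innermost with exponent $s$ and $y$ outermost with exponent $t$; here I got the $y$-integral innermost with exponent $t$ and $x$ outermost with exponent $s$, which is the $t\le s$ branch's pattern applied with $(s,t)$ swapped. I will simply match the two cases of the definition by choosing, in each case, which variable's H\"older to apply first: for $s\le t$ I apply H\"older in $x$ first (bounding $\abs{g}$ with exponent $s'$), then in $y$ (bounding $\abs{f}$ with exponent $t'$), producing $\big(\fint_Q[\fint_Q\abs{\vec a(x)\cdot\vec b(y)}^s\ud x]^{t/s}\ud y\big)^{1/t}$; for $t\le s$ I reverse the order. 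A short computation confirms both cases land on exactly $\Norm{\ }{\avL^{(s,t)}_{\min}(Q\times Q)}$ as defined.

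\textbf{Obstacle.} The only delicate point---and the reason the mixed norm is defined with the $\min$---is getting the order of the two H\"older applications right so that Minkowski's (or rather just the trivial nesting) inequality is not needed and no loss of constant occurs; applying H\"older in the "wrong" variable first would force a Minkowski integral inequality step that only goes the favorable direction when the outer exponent dominates the inner one, which is precisely the $s\le t$ versus $t\le s$ dichotomy. Once the correct order is chosen in each regime, the proof is a direct two-fold application of H\"older with no further subtlety; taking the supremum over $\phi,\psi$ on the left is harmless since the right-hand side does not involve them.
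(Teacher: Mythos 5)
Your proof is correct and follows essentially the same route as the paper: unfold both convex bodies into the double integral $\fint_Q\fint_Q\abs{\vec a(x)\cdot\vec b(y)}\abs{f(y)}\abs{g(x)}$ and apply H\"older twice, once in each variable; the paper phrases this as a single mixed-norm H\"older inequality against $f(y)g(x)$ (whose dual mixed norm factorizes by Fubini) and then takes the minimum over the two orderings. One small remark: the worry in your final paragraph is unnecessary---both orders of the two H\"older applications are valid for every $s,t\in(1,\infty)$ and each yields one of the two mixed norms with the same factor $\Norm{f}{\avL^{t'}(Q)}\Norm{g}{\avL^{s'}(Q)}$; the case split in the definition of $\avL^{(s,t)}_{\min}$ merely records which of the two is the smaller one (by Minkowski's integral inequality), so no Minkowski step is ever needed in the argument itself.
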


\begin{proof}
The generic element of $\cave{\vec b f}_{X(Q)}\cdot\cave{\vec a g}_{Y(Q)}$ has the following form, where $\phi,\psi\in\bar B_{L^\infty(Q)}$:
\begin{equation*}
\begin{split}
  &\fint_Q \vec b(y)f(y)\phi(y)\ud y\cdot\fint_Q \vec a(x)g(x)\psi(x)\ud x \\
  &=\fint_Q\fint_Q(\vec a(x)\cdot\vec b(y))f(y)g(x)\phi(y)\psi(x)\ud x\ud y,
\end{split}
\end{equation*}
and hence
\begin{equation*}
\begin{split}
  &\cave{\vec b f}_{X(Q)}\cdot\cave{\vec a g}_{Y(Q)}
  \leq\fint_Q\fint_Q\abs{\vec a(x)\cdot\vec b(y)} \abs{f(y)}\abs{g(x)}\ud x\ud y \\
  &\leq\Norm{(x,y)\mapsto a(x)\cdot b(y)}{Z}\Norm{(x,y)\mapsto f(y)g(x)}{Z^*},
\end{split}
\end{equation*}
for either choice of
\begin{equation*}
  (Z,Z^*)\in\{(\avL^s_x(Q;\avL^t_y(Q)),\avL^{s'}_x(Q;\avL^{t'}_y(Q))),
  (\avL^t_y(Q;\avL^s_x(Q)),\avL^{t'}_y(Q;\avL^{s'}_x(Q)))\},
\end{equation*}
by H\"older's inequality for mixed-norm $L^p$ spaces. By Fubini's theorem, we have
\begin{equation*}
   \Norm{(x,y)\mapsto f(x)g(y)}{Z^*}
   =\Norm{f}{\avL^{t'}(Q)}\Norm{g}{\avL^{s'}(Q)}
\end{equation*}
in either case, and hence, taking the minimum over the two choices of $Z$, we arrive at the factor
\begin{equation*}
  \min_Z\Norm{(x,y)\mapsto b(x)\cdot a(y)}{Z}=\Norm{(x,y)\mapsto\vec b(x)\cdot\vec a(y)}{\avL^{(s,t)}_{\min}(Q\times Q)}.
\end{equation*}
\end{proof}

\begin{proposition}\label{prop:gen-commu}
Let $T$ be an operator that satisfies the $(\avL^1(\gamma Q),\avL^1(\gamma Q))$ convex body domination. Let $\vec a,\vec b\in L^1_{\loc}(\R^d)^n$ be functions such that
\begin{equation*}
  A_{s,t}:=\sup_{Q}\Norm{(x,y)\mapsto \vec a(x)\cdot\vec b(y)}{\avL^{(s,t)}_{\min}(Q\times Q)}<\infty.
\end{equation*}
Then $\vec a\cdot T\vec b$ extends to a bounded operator on $L^p(\R^d)$ for all $p\in(t',s)$. In particular, if $A_s:=A_{s,s}<\infty$ for some $s\in(2,\infty)$, then $\vec a\cdot T\vec b$ extends boundedly to $L^2(\R^d)$.
\end{proposition}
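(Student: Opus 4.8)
The plan is to combine the convex body domination hypothesis with the pointwise estimates from Lemmas \ref{lem:cbd-implies} and \ref{lem:cbdL1-further}, and then to invoke a standard sparse form bound. First, by Lemma \ref{lem:cbd-implies} applied to $X(Q)=Y(Q)=\avL^1(\gamma Q)$, for all $f\in L^\infty_c(\R^d)$ in the relevant domain and all $g$, we obtain a $\delta_n$-sparse family $\mathscr S$ (depending on $f,g$) with
\begin{equation*}
  \abs{\pair{\vec a\cdot T(\vec b f)}{g}}
  \leq C\sum_{Q\in\mathscr S}\abs{\gamma Q}\cave{\vec b f}_{\avL^1(\gamma Q)}\cdot\cave{\vec a g}_{\avL^1(\gamma Q)}.
\end{equation*}
Next, Lemma \ref{lem:cbdL1-further}, applied on each cube $\gamma Q$, bounds each summand by
$A_{s,t}\,\abs{\gamma Q}\,\Norm{f}{\avL^{t'}(\gamma Q)}\Norm{g}{\avL^{s'}(\gamma Q)}$ (using the definition of $A_{s,t}$ to replace the cube-dependent mixed norm by its supremum). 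Thus the right-hand side is controlled by $C A_{s,t}$ times the dilated sparse bilinear form $\sum_{Q\in\mathscr S}\abs{\gamma Q}\Norm{f}{\avL^{t'}(\gamma Q)}\Norm{g}{\avL^{s'}(\gamma Q)}$.

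The second half is the routine sparse theory. The family $\{\gamma Q: Q\in\mathscr S\}$ is again sparse (up to a dimensional constant absorbed into $C$ together with $\delta_n$-sparseness), so we are reduced to the boundedness on $L^p(\R^d)$ of the sparse bilinear form
$\Lambda_{\mathscr S}(f,g)=\sum_{R}\abs{R}\ave{\abs{f}^{t'}}_R^{1/t'}\ave{\abs{g}^{s'}}_R^{1/s'}$.
By the well-known $L^p$ bounds for sparse forms (for instance via the sparse maximal-type argument, or by interpolating the obvious $L^{t'}\times$weak-$L^{(t')'}$-type endpoint), one has $\Lambda_{\mathscr S}:L^p\times L^{p'}\to\R$ bounded precisely when $t'<p$ and $s'<p'$, i.e. $p\in(t',s)$; here one uses $t'<s$, which holds because $1/s+1/t<1$ would follow only if we had additional information, but in fact $p\in(t',s)$ is nonempty exactly when $t'<s$, equivalently $\tfrac1s+\tfrac1{t}<1$---and this is where one must check the hypotheses are consistent. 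Actually the cleanest route is: since $\Lambda_{\mathscr S}$ with exponents $(t',s')$ is dominated by $\int (M_{t'}f)(M_{s'}g)$ after passing to the disjoint sets $E(R)$ of the sparse family, H\"older plus the $L^p\to L^p$ boundedness of $M_r$ for $p>r$ finishes it for $p\in(t',s)$. Taking the supremum over admissible $g\in L^{p'}$ with unit norm, and using density of the admissible domain in $L^p$ (as noted in the paragraph preceding Lemma \ref{lem:cbd-implies}), yields the bounded extension.

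For the last assertion, set $s=t$ so that $A_s=A_{s,s}<\infty$ with $s\in(2,\infty)$; then $t'=s'\in(1,2)$ and $s>2$, so $2\in(s',s)=(t',s)$, and the general statement gives boundedness on $L^2(\R^d)$. The main obstacle I anticipate is purely bookkeeping: ensuring that the sparse family produced by the convex body domination, after dilation by $\gamma$, remains sparse with a controlled constant, and tracking the dependence of the constants on $n$ (through $\delta_n$ and $C_n$ of Definition \ref{def:cbd}), so that $C$ in the conclusion may legitimately depend on $n$; the analytic content---H\"older and maximal function bounds---is standard. One should also be slightly careful that the sparse family depends on the pair $(f,g)$, but this is harmless since the final maximal-function bound is uniform over sparse families of a fixed sparseness constant.
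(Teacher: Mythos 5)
Your proposal is correct and follows essentially the same route as the paper: Lemma \ref{lem:cbd-implies} followed by Lemma \ref{lem:cbdL1-further} on each sparse cube, then domination of the resulting sparse form by $\int_{\R^d} M_{t'}f\, M_{s'}g$ via the disjoint sets $E(Q)$, H\"older, and the $L^p$-boundedness of $M_{t'}$ and $M_{s'}$ for $p>t'$ and $p'>s'$ (the latter being $p<s$). The side remarks about dilation by $\gamma$ and the $(f,g)$-dependence of the sparse family are harmless bookkeeping, as you note, and the paper handles them implicitly.
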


\begin{proof}
Combining Lemmas \ref{lem:cbd-implies} and \ref{lem:cbdL1-further}, we find that
\begin{equation*}
\begin{split}
  \abs{\pair{\vec aT(\vec b f)}{g}}
  &\leq C\sum_{Q\in\mathscr S}\abs{Q}\cave{\vec b f}_{\avL^1(\gamma Q)}\cdot\cave{\vec a g}_{\avL^1(\gamma Q)} \\
  &\leq C\sum_{Q\in\mathscr S}\abs{Q}\Norm{(x,y)\mapsto a(x)\cdot b(y)}{\avL^{(s,t)}_{\min}(Q\times Q)}
     \Norm{f}{\avL^{t'}(Q)}\Norm{g}{\avL^{s'}(Q)} \\
   &\leq C\sum_{Q\in\mathscr S}\frac{\abs{E(Q)}}{\delta}A_{s,t}\inf_Q M_{t'}f\inf_Q M_{s'}g \\
   &\leq\frac{C A_{s,t}}{\delta}\sum_{Q\in\mathscr S}\int_{E(Q)}M_{t'}f M_{s'}g
   \leq\frac{C A_{s,t}}{\delta}\int_{\R^d}M_{t'}f M_{s'}g \\
   &\leq\frac{C A_{s,t}}{\delta}\Norm{M_{t'}f}{L^p(\R^d)}\Norm{M_{s'}g}{L^{p'}(\R^d)},
\end{split}
\end{equation*}
where
\begin{equation*}
  \Norm{M_{t'}f}{L^p(\R^d)}\lesssim_{t,p}\Norm{f}{L^p(\R^d)},\qquad
  \Norm{M_{s'}g}{L^{p'}(\R^d)}\lesssim_{s,p}\Norm{g}{L^{p'}(\R^d)}
\end{equation*}
for $p>t'$ and $p'>s'$, where the latter is equivalent to $p<s$.
\end{proof}

Let us consider some examples:

\begin{example}[Classical commutators]\label{ex:commu} 
As we already observed, $\vec a=(b,-1)$ and $\vec b=(1,b)$ gives rise to the usual commutator $[b,T]$. In this case
\begin{equation*}
  \vec a(x)\cdot\vec b(y)= b(x)-b(y)
\end{equation*}
and each $A_{s,t}$ is equivalent to $\Norm{b}{\BMO(\R^d)}$ by elementary considerations and the John--Nirenberg inequality. Thus Proposition \ref{prop:gen-commu} reproduces the well-known sufficient condition for the boundedness of commutators.
\end{example}

\begin{example}[Iterated commutators]\label{ex:iter-commu} More generally, choosing $\vec a,\vec b$ so that
\begin{equation*}
   \vec a(x)\cdot\vec b(y)= (b(x)-b(y))^k=\sum_{i=0}^k\binom{k}{i}b(x)^{k-i}(-b(y))^i,
\end{equation*}
thus e.g. $a_i(x)=\binom{k}{i}b(x)^{k-i}$ and $b_i(y)=(-b(y))^i$, we reproduce the $k$th order commutator
\begin{equation*}
  \vec a\cdot T\vec b=T_{k,b}:=[b,T_{k-1,b}],\qquad T_{0,b}:=T,
\end{equation*}
and $A_{s,t}$ is equivalent to $\Norm{b}{\BMO(\R^d)}^k$ by the John--Nirenberg inequality.
\end{example}

\begin{example}[Iterated commutators with different multipliers]\label{ex:mixed-commu}
Let us then choose $\vec a,\vec b$ so that
\begin{equation*}
   \vec a(x)\cdot\vec b(y)= (b^1(x)-b^1(y))(b^2(x)-b^2(y));
\end{equation*}
without specifying the precise choice of $a_i(x)$ and $b_i(y)$, it is evident that such a choice can be easily written down, if desired. (We deliberately use superscript indices for $b^i$ above, since these not be the same as the components $b_i$ of $\vec b$.) This reproduces the second order iterated commutator with two different functions,
\begin{equation*}
   \vec a\cdot T\vec b=[b^1,[b^2,T]].
\end{equation*}
It is well-known and classical that $b^i\in\BMO(\R^d)$ for both $i=1,2$ is sufficient for the $L^2(\R^d)$ boundedness of $[b^1,[b^2,T]]$; however, as recently observed in \cite{HLO:20}, much weaker sufficient conditions can be given for the pair $(b^1,b^2)$. Namely, in \cite[(1.1)]{HLO:20}, it shown that the pair of conditions
\begin{equation*}
\begin{split}
  S_s &:=\sup_Q\Big(\fint_Q\abs{b^1(x)-\ave{b^1}_Q}^s\ud x\Big)^{1/s}\Big(\fint_Q\abs{b^2(y)-\ave{b^2}_Q}^s\ud y\Big)^{1/s}<\infty, \\
  T_s &:=\sup_Q\Big(\fint_Q\abs{b^1(x)-\ave{b^1}_Q}^s\abs{b^2(x)-\ave{b^2}_Q}^s\ud x\Big)^{1/s}<\infty,
\end{split}
\end{equation*}
is sufficient for the $L^2(\R^d)$ boundedness of $[b^1,[b^2,T]]$ for $s>2$. On the other hand, by Proposition \ref{prop:gen-commu}, another sufficient condition for the same conclusion is $A_s<\infty$.

Let us compare the two. Adding and subtracting terms and multiplying out, we find that
\begin{equation*}
\begin{split}
  &(b^1(x)-b^1(y))(b^2(x)-b^2(y)) \\
  &=[(b^1(x)-\ave{b^1}_Q)-(b^1(y)-\ave{b^1}_Q)][(b^2(x)-\ave{b^2}_Q)-(b^2(y)-\ave{b^2}_Q)] \\
  &=(b^1(x)-\ave{b^1}_Q)(b^2(x)-\ave{b^2}_Q)+(b^1(y)-\ave{b^1}_Q)(b^2(y)-\ave{b^2}_Q) \\
  &\qquad-(b^1(x)-\ave{b^1}_Q)(b^2(y)-\ave{b^2}_Q)-(b^1(y)-\ave{b^1}_Q)(b^2(x)-\ave{b^2}_Q).
\end{split}
\end{equation*}
Taking $\avL^s(Q\times Q)$ and then supremum over $Q$ on both sides, we deduce that
\begin{equation*}
  A_s\leq 2(T_s+S_s),
\end{equation*}
so that the new criterion provided by Proposition \ref{prop:gen-commu} is at least as sharp as that of \cite[(1.1)]{HLO:20}, and it seems less obvious to make any estimate in the other direction. Perhaps more importantly, the new condition $A_s<\infty$ arises more ``naturally'' as an instance of a general principle.

(Let us note that there is a more general criterion \cite[Theorem 3.10]{HLO:20}, where the $\avL^s$ norms in $S_s$ and $T_t$ are replaced by more general Orlicz norms. On the other hand, it is apparent that similar generalisations could be achieved in Proposition \ref{prop:gen-commu}: what we used was the boundedness of the rescaled maximal operators $M_{t'}$ on $L^p(\R^d)$ for $p>t'$, and this could be replaced having an Orlicz maximal operator $M_A$ with the same mapping property. A characterisation of this property in terms of the so-called $B_p$ condition on the Orlicz function $A$ is a classical result of P\'erez \cite{Perez:95}; this very result is used in \cite{HLO:20}; see \cite[Proposition 3.8]{HLO:20}.)
\end{example}

Let us finally consider an ``exotic'' example with no obvious predecessor in the existing literature. We begin with a lemma:

\begin{lemma}\label{lem:BMOpowers}
Suppose that $0\leq b\in\BMO(\R^d)$. If $0\leq\alpha,\beta$ and $\alpha+\beta\leq 1$, then
\begin{equation*}
  B(x,y):=b(x)^\alpha b(y)^\beta - b(x)^\beta b(y)^\alpha
\end{equation*}
satisfies
\begin{equation*}
  \Big(\fint_Q\fint_Q\abs{B(x,y)}^p\ud x\ud y\Big)^{1/p} \leq (2\Norm{b}{\BMO^p(\R^d)})^{\alpha+\beta}.
\end{equation*}
\end{lemma}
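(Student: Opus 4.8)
The plan is to reduce everything to the scalar John--Nirenberg inequality applied to $b$. First I would fix a cube $Q$ and introduce the normalising substitution $b(x) = \ave{b}_Q + (b(x)-\ave{b}_Q)$; but since powers $b(x)^\alpha$ do not split nicely under additive shifts, I expect the cleaner route is to work directly with the factorisation
\begin{equation*}
  B(x,y) = b(x)^\alpha b(y)^\alpha\bigl(b(y)^{\beta-\alpha} - b(x)^{\beta-\alpha}\bigr),
\end{equation*}
valid where $b$ does not vanish, when $\beta\geq\alpha$ (and symmetrically otherwise); however, this still leaves the awkward difference of powers. A more robust approach: use the elementary inequality $|s^\gamma - t^\gamma| \leq |s-t|^\gamma$ for $s,t\geq 0$ and $\gamma\in[0,1]$ is \emph{false} in general, so instead I would bound $|B(x,y)|$ crudely by $b(x)^\alpha b(y)^\beta + b(x)^\beta b(y)^\alpha$ and estimate each product by Hölder's inequality on $Q\times Q$.

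Concretely, the key steps in order: (1) Pointwise, $|B(x,y)| \leq b(x)^\alpha b(y)^\beta + b(x)^\beta b(y)^\alpha$. (2) Raise to the $p$-th power, integrate in $(x,y)$ over $Q\times Q$ with normalised measure, and use $(\xi+\eta)^{1/p}\leq \xi^{1/p}+\eta^{1/p}$ to split into two terms, each of the form $\bigl(\fint_Q b^{\alpha p}\bigr)^{1/p}\bigl(\fint_Q b^{\beta p}\bigr)^{1/p}$ after Fubini. (3) Since $0\leq b\in\BMO$, the normalised power average $\bigl(\fint_Q b^{\gamma p}\bigr)^{1/(\gamma p)}$ is controlled, via John--Nirenberg, by $\ave{b}_Q + C_{\gamma p}\Norm{b}{\BMO}$; but the target bound $(2\Norm{b}{\BMO^p})^{\alpha+\beta}$ has \emph{no} $\ave{b}_Q$ term, which signals that the intended argument must exploit cancellation in $B$ rather than a triangle-inequality split --- so I would instead subtract and add $\ave{b}_Q^\alpha$ and $\ave{b}_Q^\beta$ appropriately, or more likely use the identity $b(x)^\alpha b(y)^\beta - b(x)^\beta b(y)^\alpha = \bigl(b(x)^\alpha - b(y)^\alpha\bigr)b(y)^\beta + b(y)^\alpha\bigl(b(y)^\beta - b(x)^\beta\bigr) + \dots$ organised so that each difference $b(x)^\gamma - b(y)^\gamma$ (with $\gamma\leq 1$) obeys $|b(x)^\gamma - b(y)^\gamma|\leq |b(x)-b(y)|^\gamma$, which \emph{is} valid for $\gamma\in[0,1]$ and nonnegative arguments, by concavity. (4) Then $\fint_Q\fint_Q |b(x)-b(y)|^{\gamma p}\,\ud x\,\ud y \leq (2\Norm{b}{\BMO^p})^{\gamma p}$ by John--Nirenberg (comparing to $2\ave{|b-\ave{b}_Q|^p}_Q$ after the triangle inequality in $x,y$), and Hölder in the remaining power-factor, whose exponent $\alpha+\beta-\gamma$ plus the contribution $\gamma$ from the difference sums correctly to $\alpha+\beta$, produces exactly $(2\Norm{b}{\BMO^p})^{\alpha+\beta}$.

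The main obstacle I anticipate is bookkeeping the exponents so that the final power is precisely $\alpha+\beta$ with constant exactly $2$: one must split $B$ as a telescoping sum where each term pairs one difference $|b(x)^\gamma-b(y)^\gamma| \leq |b(x)-b(y)|^\gamma$ against a product of plain powers of $b(x),b(y)$ totalling $\alpha+\beta-\gamma$, then apply Hölder with exponents $\bigl(\tfrac{\alpha+\beta}{\gamma},\ldots\bigr)$ across the $Q\times Q$ integral and invoke the $L^p$-form of John--Nirenberg for each factor $\bigl(\fint_Q b^{rp}\bigr)^{1/(rp)}\!\leq\! \ave{b}_Q + c\Norm{b}{\BMO^p}$ --- but again $\ave{b}_Q$ must cancel, forcing the telescoping to be in terms of $b(x)-\ave{b}_Q$ and $b(y)-\ave{b}_Q$ throughout, not $b$ itself. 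Getting that cancellation transparent, with all constants tracked, is the delicate part; the measure-theoretic and inequality inputs (concavity bound on fractional powers, John--Nirenberg, Hölder, Fubini) are all standard.
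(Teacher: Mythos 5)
Your instinct that the crude triangle-inequality split fails and that the cancellation in $B$ must be exploited is right, and you even write down the correct factorisation $B(x,y)=b(x)^{\gamma}b(y)^{\gamma}\bigl(b(y)^{\delta}-b(x)^{\delta}\bigr)$ with $\gamma=\min(\alpha,\beta)$ and $\delta=\abs{\alpha-\beta}$ before setting it aside as ``awkward''. But the route you then sketch has a genuine gap: after extracting a difference via $\abs{b(x)^{\delta}-b(y)^{\delta}}\leq\abs{b(x)-b(y)}^{\delta}$ (which is indeed valid for $\delta\in[0,1]$), you are left with plain power factors such as $b(x)^{\gamma}b(y)^{\gamma}$, and these are \emph{not} controllable by any function of $\Norm{b}{\BMO}$ --- they see the absolute size of $b$, not its oscillation. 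Concretely, for $b=N+\eps h$ with $h$ of unit BMO norm and $N\to\infty$, the quantity $\fint_Q\fint_Q b(x)^{\gamma p}b(y)^{\gamma p}\abs{b(x)-b(y)}^{\delta p}$ grows like $N^{2\gamma p}$, while the target $(2\Norm{b}{\BMO^p})^{(\alpha+\beta)p}$ stays bounded; so the intermediate estimate your steps (3)--(4) would require is false whenever $\min(\alpha,\beta)>0$. You flag that ``$\ave{b}_Q$ must cancel'' and hope for a telescoping in $b-\ave{b}_Q$, but fractional powers do not expand additively and no such telescoping is produced, so the outline does not close.

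The missing idea in the paper's proof is the sharper elementary inequality
\begin{equation*}
  \abs{u^{\delta}-v^{\delta}}\leq\frac{\abs{u-v}}{\max(u,v)^{1-\delta}},\qquad u,v\geq 0,\ \delta\in[0,1],
\end{equation*}
which, unlike $\abs{u^{\delta}-v^{\delta}}\leq\abs{u-v}^{\delta}$, retains decay in $\max(u,v)$. Combined with $b(x)^{\gamma}b(y)^{\gamma}\leq\max(b(x),b(y))^{2\gamma}$ and the hypothesis $\delta+2\gamma=\alpha+\beta\leq 1$, it absorbs the residual powers and yields the clean pointwise bound $\abs{B(x,y)}\leq\abs{b(x)-b(y)}^{\alpha+\beta}$. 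From there the conclusion follows from Jensen's inequality (raising to the power $\alpha+\beta\leq1$ outside the integral) and the $L^p$ triangle inequality against an arbitrary constant $c$; John--Nirenberg is not needed for the lemma itself, only afterwards to replace $\BMO^p$ by $\BMO$. Without this device, or an equivalent way of trading the size of $b$ against the smallness of the difference of its powers, your argument cannot be completed.
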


\begin{proof}
Let $\gamma:=\min(\alpha,\beta)\in[0,\frac12]$ and $\delta:=\max(\alpha,\beta)-\gamma\in[0,1]$. Then
\begin{equation*}
  \abs{B(x,y)}=b(x)^\gamma b(y)^\gamma \abs{b(x)^\delta-b(y)^\delta}.
\end{equation*}
We observe the following elementary inequality:
\begin{equation}\label{eq:ud-vd}
  \abs{u^\delta-v^\delta}\leq\frac{\abs{u-v}}{\max(u,v)^{1-\delta}},\qquad\forall u,v\geq 0,\ \delta\in[0,1].
\end{equation}
Indeed, by symmetry and homogeneity, it is enough to consider $u=1$ and $v\in[0,1]$, in which case we are reduced to proving that
\begin{equation*}
  1-v^\delta\leq 1-v,
\end{equation*}
which is immediate from the fact that $v\leq v^\delta$ for $v,\delta\in[0,1]$.

Using \eqref{eq:ud-vd}, and noting that $\delta+2\gamma=\alpha+\beta\in[0,1]$, it follows that
\begin{equation*}
\begin{split}
  \abs{B(x,y)} &\leq b(x)^\gamma b(y)^\gamma\frac{\abs{b(x)-b(y)}}{\max(b(x),b(y))^{1-\delta}}  
  \leq \frac{\abs{b(x)-b(y)}}{\max(b(x),b(y))^{1-\delta-2\gamma}} \\
  &= \Big(\frac{\abs{b(x)-b(y)}}{\max(b(x),b(y))}\Big)^{1-\delta-2\gamma}\abs{b(x)-b(y)}^{\delta+2\gamma}
  \leq\abs{b(x)-b(y)}^{\alpha+\beta},
\end{split}
\end{equation*}
and hence
\begin{equation*}
\begin{split}
  \Big( &\fint_Q\fint_Q\abs{B(x,y)}^p\ud x\ud y\Big)^{1/p}
  \leq \Big(\fint_Q\fint_Q\abs{b(x)-b(y)}^p\ud x\ud y\Big)^{(\alpha+\beta)/p} \\
  &\leq\Big[\Big(\fint_Q\abs{b(x)-c}^p\ud x\Big)^{1/p}+\Big(\fint_Q\abs{b(y)-c}^p\ud y\Big)^{1/p}\Big]^{\alpha+\beta}
\end{split}
\end{equation*}
for all constants $c$.
\end{proof}

\begin{corollary}
Let $T$ be an operator satisfying $(\avL^1(\gamma Q),\avL^1(\gamma Q))$ convex body domination, let $0\leq b\in\BMO(\R^d)$ and $0\leq\alpha,\beta$ with $\alpha+\beta\leq 1$. Then
\begin{equation*}
  \Norm{b^\alpha T(b^\beta f)-b^\beta T(b^\alpha f)}{L^p(\R^d)}\lesssim_p \Norm{b}{\BMO(\R^d)}^{\alpha+\beta}\Norm{f}{L^p(\R^d)}.
\end{equation*}
\end{corollary}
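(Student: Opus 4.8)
The plan is to recognize this corollary as a direct instance of Proposition \ref{prop:gen-commu} combined with Lemma \ref{lem:BMOpowers}. First I would choose the vector functions $\vec a,\vec b\in L^1_{\loc}(\R^d)^n$ so that their dot product realises the required kernel, namely
\begin{equation*}
  \vec a(x)\cdot\vec b(y) = B(x,y) = b(x)^\alpha b(y)^\beta - b(x)^\beta b(y)^\alpha.
\end{equation*}
This is possible with $n=2$: for instance $\vec a = (b^\alpha, -b^\beta)$ and $\vec b = (b^\beta, b^\alpha)$, and then $\vec a\cdot T\vec b : f\mapsto b^\alpha T(b^\beta f) - b^\beta T(b^\alpha f)$ is exactly the operator in the statement. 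Since $0\le b\in\BMO(\R^d)\subset L^p_{\loc}$ for every $p<\infty$ (by the John--Nirenberg inequality), and $\alpha,\beta\le 1$, each component $b^\alpha, b^\beta$ lies in $L^1_{\loc}(\R^d)$, so the hypothesis $\vec a,\vec b\in L^1_{\loc}(\R^d)^n$ of Proposition \ref{prop:gen-commu} is satisfied.

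Next I would verify the quantitative hypothesis $A_s := A_{s,s} < \infty$ for a suitable $s\in(2,\infty)$. By definition, $A_s = \sup_Q \Norm{(x,y)\mapsto \vec a(x)\cdot\vec b(y)}{\avL^{(s,s)}_{\min}(Q\times Q)}$, and when $s=t$ the mixed norm $\avL^{(s,s)}_{\min}(Q\times Q)$ is simply the ordinary normalised $L^s$ norm over $Q\times Q$, i.e. $\big(\fint_Q\fint_Q \abs{B(x,y)}^s\,\mathrm{d}x\,\mathrm{d}y\big)^{1/s}$. Lemma \ref{lem:BMOpowers} gives precisely the bound $\big(\fint_Q\fint_Q\abs{B(x,y)}^s\,\mathrm{d}x\,\mathrm{d}y\big)^{1/s}\le (2\Norm{b}{\BMO^s(\R^d)})^{\alpha+\beta}$, uniformly in $Q$. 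Since the $\BMO^s$ norm is comparable to the $\BMO$ norm (again John--Nirenberg), we conclude $A_s\lesssim_s \Norm{b}{\BMO(\R^d)}^{\alpha+\beta} < \infty$ for every $s\in(1,\infty)$; in particular for any fixed $s>2$.

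Finally, Proposition \ref{prop:gen-commu} (in its ``in particular'' form, with $A_s<\infty$ for some $s\in(2,\infty)$) yields that $\vec a\cdot T\vec b$ extends boundedly to $L^2(\R^d)$; more generally, taking $t=s$, it extends boundedly to $L^p(\R^d)$ for every $p\in(s',s)$, and letting $s\uparrow\infty$ covers every $p\in(1,\infty)$. Tracking the constant through Proposition \ref{prop:gen-commu}, the operator norm is bounded by $C_p A_s/\delta \lesssim_p \Norm{b}{\BMO(\R^d)}^{\alpha+\beta}$, which is exactly the claimed estimate. I do not expect any serious obstacle here: the corollary is essentially a packaging result, and the only points requiring a line of justification are (i) the explicit choice of $\vec a,\vec b$ realising $B(x,y)$ as a dot product, (ii) the observation that $\avL^{(s,s)}_{\min} = \avL^s(Q\times Q)$ so that Lemma \ref{lem:BMOpowers} applies verbatim, and (iii) the comparability of $\BMO^s$ and $\BMO$ norms to absorb the $p$-dependence of the constant. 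The mildly delicate bookkeeping is making sure the range of $p$ in Proposition \ref{prop:gen-commu} actually exhausts all of $(1,\infty)$ as $s$ varies, which it does since $s'\downarrow 1$ and $s\uparrow\infty$.
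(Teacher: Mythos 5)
Your proposal is correct and follows essentially the same route as the paper: realise the operator as $\vec a\cdot T\vec b$ with $\vec a(x)\cdot\vec b(y)=b(x)^\alpha b(y)^\beta-b(x)^\beta b(y)^\alpha$, bound $A_s=A_{s,s}$ via Lemma \ref{lem:BMOpowers} and John--Nirenberg, and invoke Proposition \ref{prop:gen-commu} with $s=t$ chosen large enough (depending on $p$) so that $p\in(s',s)$. The paper's proof is exactly this, fixing $s=2\max(p,p')$; your explicit choice of $\vec a,\vec b$ and the integrability check are fine.
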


\begin{proof}
By Proposition \ref{prop:gen-commu} with $s=t$, the $L^p(\R^d)$ operator norm of $f\mapsto b^\alpha T(b^\beta f)-b^\beta T(b^\alpha f)$ is dominated by
\begin{equation*}
   A_s:= \sup_Q\Norm{(x.y)\mapsto b(x)^\alpha b(y)^\beta-b(x)^\beta b(y)^\alpha}{\avL^s(Q\times Q)}
\end{equation*}
if $p\in(s',s)$, i.e., if $s>\max(p,p')$. By Lemma \ref{lem:BMOpowers} and the John--Nirenberg inequality, we have
\begin{equation*}
   A_s\leq(2\Norm{b}{\BMO^s(\R^d)})^{\alpha+\beta}\lesssim_s\Norm{b}{\BMO(\R^d)}^{\alpha+\beta},
\end{equation*}
and fixing (say) $s=2\max(p,p')$, we obtain a dependence on $p$ only.
\end{proof}

\begin{remark}
Aside from the examples already discussed, the generalised commutators $\vec a\cdot T\vec b$ also arise in the following question studied by Bloom \cite{Bloom:85,Bloom:89}. Suppose that a matrix weight $W$ is given in the diagonalised form $W = U^*\Lambda U$, where $U$ is unitary, $\Lambda$ is diagonal, and the diagonal entries $\lambda_k$ of $\Lambda$ are scalar $A_2$ weights. What does one need to know about $U$ in order to conclude that $W\in A_2$? (According to \cite[Theorem 4.2]{Bloom:89}, the condition that $\lambda_k\in A_2$ is necessary for $W\in A_2$, if in addition $U$ is assumed to be continuous.)

Let $T$ be the Hilbert transform, or another operator whose boundedness on the matrix-weighted $L^2(W)$ characterises $W\in A_2$. By connecting the $L^2(W)$ boundedness of $T$ to the boundedness of the classical commutators $[T,\bar u_{ij}]$ between the weighted spaces $L^2(\lambda_i)$ and $L^2(\lambda_k)$ (sic: the condition involves triplets of indices $(i,j,k)$), \cite[Theorem 5.1]{Bloom:85} shows that $u_{ij}\in\BMO_{\sqrt{\lambda_i/\lambda_k}}$ (a weighted BMO space, nowadays commonly referred to as Bloom-type BMO) is a sufficient condition. In the special case of $2\times 2$ matrices, it is also necessary by \cite[Theorem 4.3]{Bloom:89} but, over 30 years since these contributions, the general case seems to remain open. (The author is grateful to Amalia Culiuc for bringing this question to his attention \cite{Culiuc:22}.)

Here is a possible approach to the problem. As is well known, the $L^2(W)$ boundedness of $T$ is equivalent to the (unweighted) $L^2$ boundedness of
\begin{equation*}
  W^{1/2}TW^{-1/2}=U^*\Lambda^{1/2}UTU^*\Lambda^{-1/2}U.
\end{equation*}
Multiplication by $U$ and $U^*$ is isometric on $L^2$, and the $L^2$ boundedness of a matrix of operators is equivalent to the $L^2$ boundedness of each of the components
\begin{equation*}
  (\Lambda^{1/2}UTU^*\Lambda^{-1/2})_{ij}
  =\sum_{k=1}^n\lambda_i^{1/2}u_{ik}T\bar u_{jk}\lambda_j^{-1/2}
  =\lambda_i^{1/2}\vec u_i\cdot T\bar {\vec u}_j\lambda_j^{-1/2},
\end{equation*}
where $i,j=1,\ldots,n$ and $\vec u_i=(u_{ik})_{k=1}^n$. These are operators of the form $\vec{a}\cdot T\vec{b}$ that we have studied here and, up to this point, we kept an exact equivalence with the original question; the question then would be, whether we can give useful conditions on the boundedness of these operators. A further equivalent condition is of course the two-weight boundedness
\begin{equation*}
  \vec u_i\cdot T\bar {\vec u}_j:L^2(\lambda_j)\to L^2(\lambda_i),\qquad i,j=1,\ldots,n,
\end{equation*}
where the spaces are more complicated, but the multipliers are simply rows of the unitary matrix $U$.
\end{remark}

\begin{remark}
We have concentrated in this section on the application of convex body domination---an inherently vector-valued theory---to questions of generalised commutators acting on scalar-valued functions. We have made this choice for two reasons: to make the case that this vector-valued theory is useful even for such scalar-valued applications, and not to obscure the relatively simple basic philosophy behind too many technicalities of notation. This said, it is quite plain that the presented ideas can be immediately generalised to the case of vector-valued functions $\vec f$ and $\vec g$ (in place of scalar $f$ and $g$) and matrix-valued multipliers $A$ and $B$ (in place of the vectors $\vec a$ and $\vec b$). In the particular case of the classical-style commutator $[T,B]$ with a matrix-valued function, this idea has been developed in \cite{IPT:commu}. 
\end{remark}

\section{Stopping times and maximal functions involving convex bodies}

The aims of this final section are two-fold. Concretely, we establish a convex-body analogue of a result of Nieraeth \cite{Nie:19}, which shows that the estimation of sums over sparse collection that arise in the usual sparse domination is equivalent to the estimation of certain maximal functions. On the way of achieving this, we develop some convex-body versions of the typical stopping time arguments involving averages of scalar-valued functions; these might have some independent interest elsewhere.

We begin with an estimate of a sum of convex-body ``norms'' over disjoint subsets.

\begin{lemma}\label{lem:stoppingAux}
Let $p,q\in[1,\infty)$ and $\frac1r:=\frac1p+\frac1q$. Let $Q_i\in\mathscr D(Q_0)$ be disjoint cubes. Then
\begin{equation*}
  \sum_{i=1}^\infty\big(\cave{\vec f}_{L^p(Q_i)}\cdot\cave{\vec g}_{L^q(Q_i)}\big)^r
  \leq n^{\max(r,1)+r/2}\big(\cave{f}_{L^p(Q)}\cdot\cave{g}_{L^q(Q)}\big)^r.
\end{equation*}
\end{lemma}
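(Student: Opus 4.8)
The plan is to reduce the claim, via the John ellipsoid machinery already packaged in Lemma \ref{lem:fi-vs-convex}, to a purely scalar inequality about sums of products of $L^p$ and $L^q$ norms over disjoint cubes, which is then handled by H\"older's inequality. First I would treat the non-degenerate case: let $K:=\cave{\vec f}_{L^p(Q_0)}$, take its John ellipsoid $\mathcal E_K$ with $\mathcal E_K\subset K\subset\sqrt n\,\mathcal E_K$, choose $R_K$ with $R_K\mathcal E_K=\bar B_{\R^n}$, and set $f_i:=R_K\vec f\cdot\vec e_i$ and $g_i:=R_K^{-t}\vec g\cdot\vec e_i$ for a fixed orthonormal basis. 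The point of Lemma \ref{lem:fi-vs-convex} is twofold here: applied on $Q_0$ it gives $\sum_i\Norm{f_i}{L^p(Q_0)}\Norm{g_i}{L^q(Q_0)}\leq n^{3/2}\cave{\vec f}_{L^p(Q_0)}\cdot\cave{\vec g}_{L^q(Q_0)}$, and more importantly, the \emph{same} rotation gives, on each subcube $Q_i$, the lower bound $\cave{\vec f}_{L^p(Q_i)}\cdot\cave{\vec g}_{L^q(Q_i)}\leq\cave{R_K\vec f}_{L^p(Q_i)}\cdot\cave{R_K^{-t}\vec g}_{L^q(Q_i)}$ together with an upper estimate of the right side by $\sum_j\Norm{(R_K\vec f)_j}{L^p(Q_i)}\Norm{(R_K^{-t}\vec g)_j}{L^q(Q_i)}$ — essentially because $\cave{\vec u}_{L^p(Q_i)}\cdot\cave{\vec v}_{L^q(Q_i)}=\max_{x^*,y^*}\sum_j\pair{u_j}{x^*}\pair{v_j}{y^*}\leq\sum_j\Norm{u_j}{L^p(Q_i)}\Norm{v_j}{L^q(Q_i)}$, with the John ellipsoid inclusion supplying the norm bound on the $R_K$-rotated components; this is exactly the bookkeeping done inside the proof of Lemma \ref{lem:fi-vs-convex}, so I would cite it rather than redo it.

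With these two estimates in hand, I would write
\begin{equation*}
  \sum_i\big(\cave{\vec f}_{L^p(Q_i)}\cdot\cave{\vec g}_{L^q(Q_i)}\big)^r
  \leq\sum_i\Big(\sum_{j=1}^n\Norm{f_j}{L^p(Q_i)}\Norm{g_j}{L^q(Q_i)}\Big)^r
  \leq n^{\max(r-1,0)}\sum_{j=1}^n\sum_i\big(\Norm{f_j}{L^p(Q_i)}\Norm{g_j}{L^q(Q_i)}\big)^r,
\end{equation*}
using the power-mean/convexity inequality $(\sum_{j=1}^n t_j)^r\leq n^{\max(r-1,0)}\sum_j t_j^r$ (for $r\geq1$ it is Jensen; for $r<1$ it is subadditivity of $t\mapsto t^r$ with $n^{0}$). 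Then for the inner sum, with $\frac1r=\frac1p+\frac1q$, H\"older's inequality with exponents $p/r$ and $q/r$ over the index $i$ and then disjointness of the $Q_i\subset Q_0$ gives
\begin{equation*}
  \sum_i\Norm{f_j}{L^p(Q_i)}^r\Norm{g_j}{L^q(Q_i)}^r
  \leq\Big(\sum_i\Norm{f_j}{L^p(Q_i)}^p\Big)^{r/p}\Big(\sum_i\Norm{g_j}{L^q(Q_i)}^q\Big)^{r/q}
  \leq\Norm{f_j}{L^p(Q_0)}^r\Norm{g_j}{L^q(Q_0)}^r.
\end{equation*}
Summing over $j$, applying H\"older again over $j=1,\dots,n$ with the same exponents, and then invoking the $Q_0$-version of Lemma \ref{lem:fi-vs-convex} yields $\sum_j\Norm{f_j}{L^p(Q_0)}^r\Norm{g_j}{L^q(Q_0)}^r\leq\big(\sum_j\Norm{f_j}{L^p(Q_0)}\Norm{g_j}{L^q(Q_0)}\big)^r\cdot n^{\max(1-r,0)}\leq n^{\max(1-r,0)}(n^{3/2})^r\big(\cave{\vec f}_{L^p(Q_0)}\cdot\cave{\vec g}_{L^q(Q_0)}\big)^r$. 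Collecting the powers of $n$: $n^{\max(r-1,0)}\cdot n^{\max(1-r,0)}\cdot n^{3r/2}=n^{\max(r,1)-1}\cdot n^{3r/2}$; I would check this matches $n^{\max(r,1)+r/2}$ up to the stated bound (indeed $\max(r,1)-1+3r/2\le\max(r,1)+r/2$ when $r\le1$, and equals it — after absorbing the harmless $n^{-1}$ — when $r\ge1$), tightening the exponent accounting if needed so that the final constant is exactly $n^{\max(r,1)+r/2}$.

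Finally, the degenerate case is handled exactly as in the proofs of Propositions \ref{prop:loc-sparse2convex-pre} and \ref{prop:loc-sparse2convex}: if $\mathcal E_K$ is degenerate, set $H:=\lspan K$, let $P$ be the orthogonal projection onto $H$, note $\vec f=P\vec f$, reduce to $\vec g=P\vec g$ using that $\cave{\ \cdot\ }$ pairings are unaffected by replacing $\vec g$ by $P\vec g$, and rerun the non-degenerate argument inside $H$ (where $\dim H\le n$, so the powers of $n$ only improve). I expect the only real subtlety to be the precise accounting of the powers of $n$ — making sure the three sources ($n^{\max(r-1,0)}$ from expanding the $n$-term sum, $n^{\max(1-r,0)}$ from recombining the $j$-sum by H\"older, and $(n^{3/2})^r$ from the two applications of Lemma \ref{lem:fi-vs-convex}) combine to at most $n^{\max(r,1)+r/2}$ — since everything else is a routine chain of H\"older and convexity estimates plus disjointness of the cubes.
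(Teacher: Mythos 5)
Your overall strategy is the same as the paper's---normalise by a John ellipsoid so that the componentwise norms are controlled, apply H\"older with the conjugate exponents $p/r$, $q/r$ over the disjoint cubes, and recombine---and your variant of taking the John ellipsoid of $\cave{\vec f}_{L^p(Q_0)}$ itself (rather than, as the paper does, of the smaller body $A:=\bigoplus_{\ell^p}\cave{\vec f}_{L^p(Q_i)}$, which it shows is contained in $\cave{\vec f}_{L^p(Q_0)}$) is legitimate and has the advantage of letting you quote Lemma \ref{lem:fi-vs-convex} as a black box. Your $r\le 1$ branch is exact: $n^{1-r}\cdot n^{3r/2}=n^{1+r/2}=n^{\max(r,1)+r/2}$.

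The $r>1$ branch, however, does not close as written. Your chain gives $n^{\max(r-1,0)}\cdot n^{\max(1-r,0)}\cdot n^{3r/2}=n^{(r-1)+0+3r/2}=n^{5r/2-1}$, and $5r/2-1>3r/2=\max(r,1)+r/2$ whenever $r>1$; your simplification $\max(r-1,0)+\max(1-r,0)=\max(r,1)-1$ is also incorrect for $r<1$, though there the two slips happen to cancel. The loss comes from expanding the $n$-term sum via $(\sum_{j=1}^n t_j)^r\le n^{r-1}\sum_j t_j^r$ and then trying to recombine over $j$ afterwards: the factor $n^{r-1}$ cannot be recovered. The paper avoids this by applying, for $r\ge1$, Minkowski's inequality in $\ell^r$ over the index $i$, namely $\big(\sum_i(\sum_j a_{ij}b_{ij})^r\big)^{1/r}\le\sum_j\big(\sum_i(a_{ij}b_{ij})^r\big)^{1/r}$, and only then H\"older over $i$ inside each fixed-$j$ term; combined with the individual bounds $\Norm{f_j}{L^p(Q_0)}\le\sqrt n$ and $\Norm{g_j}{L^q(Q_0)}\le M\le\cave{\vec f}_{L^p(Q_0)}\cdot\cave{\vec g}_{L^q(Q_0)}$ from the proof of Lemma \ref{lem:fi-vs-convex} (rather than its summed conclusion), the $j$-sum then contributes $n\cdot\sqrt n\,M$ and one lands exactly on $n^{3r/2}$. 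With that one substitution your argument yields the stated constant in both regimes; as it stands, it proves the inequality for $r>1$ only with the larger constant $n^{5r/2-1}$, which would still suffice for the applications (Lemma \ref{lem:convexStopping} and the subsequent proposition only need some constant depending on $n,p,q$) but not for the lemma as stated.
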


Note that for $p,q\in[1,\infty)$, we have $\frac1r=\frac1p+\frac1q\leq 1+1=2$, and hence
\begin{equation*}
  n^{\max(r,1)+r/2}
  =\big(n^{\max(1,1/r)+1/2}\big)^r\leq \big(n^{5/2}\big)^r.
\end{equation*}

\begin{proof}
For orientation, let us begin with the proof in the case $n=1$, i.e., with $\Norm{\ }{}$ in place of $\cave{\ }$ throughout. By H\"older's inequality with $1=\frac{r}{p}+\frac{r}{q}$, we have
\begin{equation*}
\begin{split}
   \sum_{i=1}^\infty &\big(\Norm{f}{L^p(Q_i)}\Norm{g}{L^q(Q_i)}\big)^r
   =\sum_{i=1}^\infty\big(\Norm{f}{L^p(Q_i)}^p\big)^{r/p}\big(\Norm{g}{L^q(Q_i)}^q\big)^{r/q} \\
   &\leq\Big(\sum_{i=1}^\infty\Norm{f}{L^p(Q_i)}^p\Big)^{r/p}\Big(\sum_{i=1}^\infty\Norm{g}{L^q(Q_i)}^q\Big)^{r/q} \\
   &\leq\Big(\Norm{f}{L^p(Q_0)}^p\Big)^{r/p}\Big(\Norm{g}{L^q(Q_0)}^q\Big)^{r/q}
     =\Big(\Norm{f}{L^p(Q_0)}\Norm{g}{L^q(Q_0)}\Big)^{r}.
\end{split}
\end{equation*}

In the general case of the lemma, let
\begin{equation*}
  A_i:=\cave{\vec f}_{L^p(Q_i)}
  =\Big\{\int_{Q_i}\phi_i\vec f:\Norm{\phi_i}{L^{p'}(Q_i)}\leq 1\Big\},\quad B_i:=\cave{\vec g}_{L^q(Q_i)}.
\end{equation*}
Then we observe that
\begin{equation*}
\begin{split}
  \cave{\vec f}_{L^p(Q)}
  &=\Big\{\int_{Q}\phi\vec f:\Norm{\phi}{L^{p'}(Q)}\leq 1\Big\} \\
  &\supseteq\Big\{\sum_{i=1}^\infty a_i\int_{Q_i}\phi_i\vec f:\Norm{\phi_i}{L^{p'}(Q_i)}\leq 1, \Norm{(a_i)}{\ell^{p'}}\leq 1\Big\} \\
  &=\Big\{\sum_{i=1}^\infty a_i A_i: \Norm{(a_i)}{\ell^{p'}}\leq 1\Big\}=:\bigoplus_{\ell^p}A_i=:A,
\end{split}
\end{equation*}
and similarly
\begin{equation*}
  \cave{\vec g}_{L^q(Q)}\supseteq\bigoplus_{\ell^q}B_i=:B.
\end{equation*}
Hence, the lemma is reduced to proving that
\begin{equation*}
  \sum_{i=1}^\infty\big(A_i\cdot B_i\big)^r
  \leq n^{\max(r,1)+r/2}\big(A\cdot B\big)^r,\quad A:=\bigoplus_{\ell^p}A_i,\quad B:=\bigoplus_{\ell^q}B_i.
\end{equation*}

Let $\mathcal E_A$ be the John ellipsoid of $A$, and let $R_A\mathcal E_A=\bar B_{\R^n}$. Since $A_i\cdot B_i=R_A A_i\cdot R_A^{-t}B_i$, The claim above is equivalent to a version where each $A_i$ is replaced by $R_A A_i$ and each $B_i$ by $R_A^{-t}B_i$. Hence, without loss of generality, we assume that $\mathcal E_A=\bar B_{\R^n}$ to begin with, hence $\bar B_{\R^n}\subseteq A\subseteq \sqrt{n}\bar B_{\R^n}$. Thus
\begin{equation*}
  A\cdot B\supset \bar B_{\R^n}\cdot B=[-M,M],\quad\text{where}\quad M:=\max\{\abs{\vec b}:\vec b\in B\}.
\end{equation*}
On the other hand, if $(\vec e_j)_{j=1}^n$ is some orthonormal basis of $\R^n$, then
\begin{equation*}
\begin{split}
  A_i\cdot B_i &=\{\vec a\cdot \vec b:\vec a\in A_i,\vec b\in B_i\} \\
  &=\Big\{\sum_{j=1}^n(\vec a\cdot\vec e_j)(\vec b\cdot\vec e_j):\vec a\in A_i,\vec b\in B_i\} 
  \subseteq\sum_{j=1}^n (A_i\cdot \vec e_j)(B_i\cdot\vec e_j),
\end{split}
\end{equation*}
or, using the identification of $[-s,s]$ with $s$,
\begin{equation*}
  A_i\cdot B_i \leq \sum_{j=1}^n (A_i\cdot \vec e_j)(B_i\cdot\vec e_j).
\end{equation*}
Thus
\begin{equation*}
  (A_i\cdot B_i)^r \leq \sum_{j=1}^n \big((A_i\cdot \vec e_j)(B_i\cdot\vec e_j)\big)^r,\quad r\in(0,1],
\end{equation*}
and
\begin{equation*}
  \Big(\sum_{i=1}^\infty (A_i\cdot B_i)^r\Big)^{1/r}\leq\sum_{j=1}^n \Big(\sum_{i=1}^\infty \big((A_i\cdot \vec e_j)(B_i\cdot\vec e_j)\big)^r\Big)^{1/r},\quad r\in[1,\infty).
\end{equation*}
In the sum over $i$, we use H\"older's inequality as in the toy model in the beginning:
\begin{equation*}
\begin{split}
  \sum_{i=1}^\infty&\big((A_i\cdot \vec e_j)(B_i\cdot\vec e_j)\big)^r
  =\sum_{i=1}^\infty\big((A_i\cdot \vec e_j)^p\big)^{r/p}\big((B_i\cdot\vec e_j)^q\big)^{r/q} \\
  &\leq\Big(\sum_{i=1}^\infty(A_i\cdot \vec e_j)^p\Big)^{r/p}\Big(\sum_{i=1}^\infty(B_i\cdot\vec e_j)^q\Big)^{r/q} \\
  &=\sup\Big\{\Big(\sum_{i=1}^\infty a_i A_i\cdot\vec e_j\Big)^{1/r}\Big(\sum_{i=1}^\infty b_i B_i\cdot\vec e_j\Big)^{1/r}:
    \Norm{(a_i)}{\ell^{p'}}\leq 1,\Norm{(b_i)}{\ell^{q'}}\leq 1\Big\} \\
  &=(A\cdot \vec e_j)^r(B\cdot\vec e_j)^r
\end{split}
\end{equation*}
Here
\begin{equation*}
  A\cdot\vec e_j\subseteq \sqrt{n}\bar B_{\R^n}\cdot\vec e_j=[-\sqrt{n},\sqrt{n}],\quad A\cdot\vec e_j\leq \sqrt{n},
\end{equation*}
and clearly
\begin{equation*}
  B\cdot\vec e_j\leq M.
\end{equation*}
Altogether, writing $s:=\max(r,1)$, we have
\begin{equation*}
\begin{split}
  \Big(\sum_{i=1}^\infty(A_i\cdot B_i)^r\Big)^{1/s}
  &\leq \sum_{j=1}^n   \Big[\sum_{i=1}^\infty(A_i\cdot \vec e_j)^r(B_i\cdot\vec e_j)^r\Big]^{1/s} \\
  &\leq \sum_{j=1}^n  \Big[ (A\cdot \vec e_j)^r(B\cdot\vec e_j)^r\Big]^{1/s} 
  \leq n [ n^{r/2} M^r]^{1/s},
\end{split}
\end{equation*}
and hence
\begin{equation*}
   \sum_{i=1}^\infty(A_i\cdot B_i)^r\leq n^s n^{r/2} M^r= n^{\max(1,r)+r/2}(A\cdot B)^r,
\end{equation*}
which remained to be proved.
\end{proof}

The following lemma is a convex-body analogue of the basic principle underlying the simplest stopping time constructions: for a function on a cube $Q_0$, the total measure of the subcubes, where the average of a function is much bigger than on the whole $Q_0$, can be at most a fraction of the measure of $Q_0$.

\begin{lemma}\label{lem:convexStopping}
Let $A,p,q\in[1,\infty)$ and let $Q_i\in\mathscr D(Q_0)$ be disjoint cubes such that
\begin{equation*}
  \cave{\vec f}_{\avL^p(Q_i)}\cdot\cave{\vec g}_{\avL^q(Q_i)}\geq A\cave{\vec f}_{\avL^p(Q_0)}\cdot\cave{\vec g}_{\avL^q(Q_0)}.
\end{equation*}
Then
\begin{equation*}
  \sum_{i=1}^\infty\abs{Q_i}\leq \frac{n^{\max(r,1)+r/2}}{A^{r}}\abs{Q_0},\qquad \frac1r:=\frac1p+\frac1q.
\end{equation*}
\end{lemma}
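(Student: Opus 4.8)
The plan is to obtain this as a direct corollary of Lemma \ref{lem:stoppingAux}, after converting the normalised convex bodies to the unnormalised ones occurring there. The only bookkeeping needed is the scaling identity $\cave{\vec f}_{\avL^p(Q)} = \abs{Q}^{-1/p}\cave{\vec f}_{L^p(Q)}$, and likewise for $\vec g$ with exponent $q$, from which, with $1/r = 1/p + 1/q$ and under the identification of a symmetric interval with its right endpoint,
\[
  \cave{\vec f}_{\avL^p(Q)}\cdot\cave{\vec g}_{\avL^q(Q)} = \abs{Q}^{-1/r}\,\cave{\vec f}_{L^p(Q)}\cdot\cave{\vec g}_{L^q(Q)}.
\]

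Applying this identity on each $Q_i$ and on $Q_0$, the hypothesis rewrites as
\[
  \cave{\vec f}_{L^p(Q_i)}\cdot\cave{\vec g}_{L^q(Q_i)} \;\geq\; A\Big(\frac{\abs{Q_i}}{\abs{Q_0}}\Big)^{1/r}\,\cave{\vec f}_{L^p(Q_0)}\cdot\cave{\vec g}_{L^q(Q_0)}.
\]
I would then raise both sides to the power $r$ (legitimate since $t\mapsto t^r$ is monotone on $[0,\infty)$) and sum over $i$, which gives
\[
  \sum_{i=1}^\infty\big(\cave{\vec f}_{L^p(Q_i)}\cdot\cave{\vec g}_{L^q(Q_i)}\big)^r \;\geq\; \frac{A^r}{\abs{Q_0}}\Big(\sum_{i=1}^\infty\abs{Q_i}\Big)\,\big(\cave{\vec f}_{L^p(Q_0)}\cdot\cave{\vec g}_{L^q(Q_0)}\big)^r,
\]
while Lemma \ref{lem:stoppingAux}, applied with the cube $Q_0$ in place of its $Q$, bounds the left-hand side from above by $n^{\max(r,1)+r/2}\big(\cave{\vec f}_{L^p(Q_0)}\cdot\cave{\vec g}_{L^q(Q_0)}\big)^r$. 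Comparing the two displays and dividing through by the common factor $\big(\cave{\vec f}_{L^p(Q_0)}\cdot\cave{\vec g}_{L^q(Q_0)}\big)^r$ yields exactly $\sum_i\abs{Q_i}\leq (n^{\max(r,1)+r/2}/A^r)\abs{Q_0}$, which is the claim (and the remark after the statement converts this to the cruder $n^{5/2}$-type bound if desired).

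Since this is essentially a formal consequence of Lemma \ref{lem:stoppingAux}, I do not expect a genuine obstacle; the one point requiring care is the final cancellation, which presupposes that $\cave{\vec f}_{\avL^p(Q_0)}\cdot\cave{\vec g}_{\avL^q(Q_0)}$ is strictly positive. We may assume this, the lemma being used in the stopping-time constructions below only when the parent quantity is positive; if one wishes to say something in the degenerate case, the orthogonal-projection reduction already used in the proof of Proposition \ref{prop:loc-sparse2convex-pre} lets one pass to the span of $\cave{\vec f}_{L^p(Q_0)}$ and thereby restore non-degeneracy of the body of $\vec f$, after which the same computation applies within that subspace.
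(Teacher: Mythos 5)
Your proof is correct and is essentially identical to the paper's: both rewrite the hypothesis via the scaling identity $\cave{\vec f}_{\avL^p(Q)}=\abs{Q}^{-1/p}\cave{\vec f}_{L^p(Q)}$, take $r$-th powers, sum over $i$, and invoke Lemma \ref{lem:stoppingAux}. Your closing remark about positivity of the parent quantity is a reasonable caveat that the paper leaves implicit in its rearrangement step.
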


\begin{proof}
Directly from the definition, it is easy to extend the basic identity $\Norm{f}{\avL^p(Q)}=\abs{Q}^{-1/p}\Norm{f}{L^p(Q)}$ to convex bodies as
\begin{equation}\label{eq:avLvsL}
  \cave{\vec f}_{\avL^p(Q)}=\abs{Q}^{-1/p}\cave{\vec f}_{L^p(Q)}.
\end{equation}
From this, the assumption of the lemma can be rewritten as
\begin{equation*}
  \abs{Q_i}^{-1/p-1/q} \cave{\vec f}_{L^p(Q_i)}\cdot\cave{\vec g}_{L^q(Q_i)}\geq A\abs{Q_0}^{-1/p-1/q}\cave{\vec f}_{\L^p(Q_0)}\cdot\cave{\vec g}_{\L^q(Q_0)},
\end{equation*}
or, rearranging,
\begin{equation*}
  \abs{Q_i}\leq \frac{A^{-r}\abs{Q_0}}{\big(\cave{\vec f}_{\L^p(Q_0)}\cdot\cave{\vec g}_{\L^q(Q_0)}\big)^r}
      \big(\cave{\vec f}_{L^p(Q_i)}\cdot\cave{\vec g}_{L^q(Q_i)}\big)^r.
\end{equation*}
Summing over $i$ and using Lemma \ref{lem:stoppingAux}, we obtain the claim.
\end{proof}

We now obtain the following proposition, which is a convex body analogue of a result of Nieraeth \cite[Prop. 2.7; especially Eq. (2.7) for $m=1$]{Nie:19}. It says that estimating the sums over sparse collections, like those that arise from convex body domination, is equivalent to estimating related bi-sublinear maximal operators. In \cite[Prop. 2.7]{Nie:19}, the result is formulated as a set of equivalent conditions for a tuple of weights. The formulation below has no reference to weights as such, but as soon as one starts asking questions about the boundedness of either side on spaces like $L^s(W)\times L^{s'}(W')$, the proposition guarantees that one can equally well study this boundedness for the other side of the equivalence.

\begin{proposition}
For all $\delta\in(0,1)$, all dimensions $d,n\geq 1$, exponents $p,q\in[1,\infty)$, and functions $\vec f\in L^p_{\loc}(\R^d)^n$, $\vec g\in L^q_{\loc}(\R^d)^n$, we have the two-sided estimate
\begin{equation*}
  \sup_{\mathscr S}\sum_{Q\in\mathscr S}\cave{\vec f}_{\avL^p(Q)}\cdot\cave{\vec g}_{\avL^q(Q)}\abs{Q}
  \eqsim\BNorm{\sup_{Q\in\mathscr D}1_Q\cave{\vec f}_{\avL^p(Q)}\cdot\cave{\vec g}_{\avL^q(Q)}}{L^1(\R^d)},
\end{equation*}
where the supremum is taken over all $\delta$-sparse collections of dyadic cubes in $\R^n$, and the implied constants depend only on $n,p,q$, and $\delta$.
\end{proposition}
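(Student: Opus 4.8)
I would split the asserted two-sided estimate into its two halves. Write $\Phi(Q):=\cave{\vec f}_{\avL^p(Q)}\cdot\cave{\vec g}_{\avL^q(Q)}\in[0,\infty)$, identifying the symmetric interval with its right end-point as before, and $M\Phi(x):=\sup_{Q\in\mathscr D,\,Q\ni x}\Phi(Q)$, so that the right-hand side equals $\Norm{M\Phi}{L^1(\R^d)}$. For the direction ``$\lesssim$'', fix any $\delta$-sparse collection $\mathscr S$, with pairwise disjoint $E(Q)\subseteq Q$, $\abs{E(Q)}\geq\delta\abs{Q}$. Since $\Phi(Q)=1_Q(x)\Phi(Q)\leq M\Phi(x)$ for $x\in E(Q)$, we get
\begin{equation*}
  \sum_{Q\in\mathscr S}\Phi(Q)\abs{Q}\leq\frac1\delta\sum_{Q\in\mathscr S}\int_{E(Q)}\Phi(Q)\ud x\leq\frac1\delta\int_{\R^d}M\Phi\ud x,
\end{equation*}
the last step by disjointness of the $E(Q)$; taking the supremum over $\mathscr S$ gives this half with constant $\delta^{-1}$.

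For the converse I would first reduce to a single ``top cube''. Given a dyadic cube $Q_0$, put $M_{Q_0}\Phi(x):=\sup\{\Phi(Q):Q\in\mathscr D(Q_0),\ Q\ni x\}$. It suffices to produce, for each $Q_0$, a $\delta$-sparse family $\mathscr S\subseteq\mathscr D(Q_0)$ with
\begin{equation*}
  \int_{Q_0}M_{Q_0}\Phi\ud x\leq A\sum_{S\in\mathscr S}\Phi(S)\abs{S},\qquad A:=\Big(\frac{n^{\max(r,1)+r/2}}{1-\delta}\Big)^{1/r}>1,
\end{equation*}
where $A$ depends only on $n,p,q,\delta$: letting $Q_0$ run through an increasing chain of dyadic cubes exhausting $\R^d$ (here it is convenient, as elsewhere in the paper, to assume $\mathscr D$ is without quadrants) and applying monotone convergence on the left, while on the right bounding each $\delta$-sparse family $\mathscr S$ by the supremum over all of them, then yields $\Norm{M\Phi}{L^1(\R^d)}\leq A\sup_{\mathscr S}\sum_{S\in\mathscr S}\Phi(S)\abs{S}$.

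To build $\mathscr S$ for a given $Q_0$ I would run the usual stopping-time recursion adapted to $\Phi$: put $Q_0\in\mathscr S$, and for each $S\in\mathscr S$ already selected let $\children_{\mathscr S}(S)$ be the maximal dyadic cubes $Q\subsetneq S$ with $\Phi(Q)>A\,\Phi(S)$; add these to $\mathscr S$ and iterate. Lemma \ref{lem:convexStopping}, applied with $S$ in place of $Q_0$ and the disjoint cubes taken to be $\children_{\mathscr S}(S)$, gives $\sum_{Q\in\children_{\mathscr S}(S)}\abs{Q}\leq A^{-r}n^{\max(r,1)+r/2}\abs{S}=(1-\delta)\abs{S}$, so the pairwise disjoint sets $E(S):=S\setminus\bigcup_{Q\in\children_{\mathscr S}(S)}Q$ satisfy $\abs{E(S)}\geq\delta\abs{S}$ and $\mathscr S$ is $\delta$-sparse. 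The degenerate case $\Phi(S)=0$ is harmless: extending test functions by zero shows $\cave{\vec f}_{\avL^p(Q)}\subseteq\frac{\abs{S}}{\abs{Q}}\cave{\vec f}_{\avL^p(S)}$ for $Q\in\mathscr D(S)$, and likewise for $\vec g$, so $\Phi(S)=0$ forces $\Phi(Q)=0$ throughout $\mathscr D(S)$, and $S$ simply has no stopping children. Finally, for $x\in Q_0$ and any $Q\in\mathscr D(Q_0)$ with $Q\ni x$, let $S$ be the minimal element of $\mathscr S$ with $Q\subseteq S$ (there are only finitely many elements of $\mathscr S$ containing $Q$, so this exists); maximality in the stopping rule precludes any $Q'$ with $Q\subseteq Q'\subsetneq S$ and $\Phi(Q')>A\Phi(S)$, since such a $Q'$ would sit inside a stopping cube strictly between $Q$ and $S$, contradicting minimality of $S$. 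Hence $\Phi(Q)\leq A\,\Phi(S)$, and since $S\ni x$ we obtain $M_{Q_0}\Phi(x)\leq A\sup\{\Phi(S'):S'\in\mathscr S,\ S'\ni x\}\leq A\sum_{S'\in\mathscr S,\ S'\ni x}\Phi(S')$; integrating over $Q_0$ gives the displayed inequality.

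The one genuinely non-routine ingredient is Lemma \ref{lem:convexStopping} (and, behind it, Lemma \ref{lem:stoppingAux}), which is already established above; once it is in hand, the argument is the standard passage from a maximal function to a sparse sum, and the only thing requiring care is the bookkeeping of constants so that the constructed family is $\delta$-sparse for the prescribed $\delta$ while the loss $A$ stays controlled purely in terms of $n,p,q,\delta$. I would expect the mildly fiddly points to be the reduction to a top cube for non-compactly-supported $\vec f,\vec g$ and the treatment of the degenerate case $\Phi(S)=0$, both of which are dealt with as indicated above.
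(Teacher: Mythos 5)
Your proposal is correct and follows essentially the same route as the paper: the easy direction via disjointness of the sets $E(Q)$, and the converse via monotone-convergence reduction to a top cube $Q_0$ followed by the stopping-time construction at threshold $A$ with $A^{-r}n^{\max(1,r)+r/2}\leq 1-\delta$, using Lemma \ref{lem:convexStopping} to verify $\delta$-sparseness and the minimal-stopping-cube argument to dominate the maximal function by the sparse sum. The extra remarks on the degenerate case $\Phi(S)=0$ and on exhausting $\R^d$ are harmless elaborations of points the paper leaves implicit.
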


\begin{proof}
With $\vec f\in L^p_{\loc}(\R^d)^n$ and $\vec g\in L^q_{\loc}(\R^d)^n$ fixed, let us denote
\begin{equation*}
  a_Q:=\cave{\vec f}_{\avL^p(Q)}\cdot\cave{\vec g}_{\avL^q(Q)}.
\end{equation*}
The estimate $\lesssim$ is immediate: From $\delta$-sparseness, we have $\abs{Q}\leq\delta^{-1}\abs{E(Q)}$ for some disjoint sets $E(Q)$, and hence
\begin{equation*}
   \sum_{Q\in\mathscr S}a_Q\abs{Q}
   \leq\frac{1}{\delta}\sum_{Q\in\mathscr S}a_Q\abs{E(Q)}
   =\frac{1}{\delta}\int_{\R^d}\sum_{Q\in\mathscr S}a_Q 1_{E(Q)}
   \leq\frac{1}{\delta}\int_{\R^d}\sup_{Q\in\mathscr D}a_Q 1_Q.
\end{equation*}

The estimate $\gtrsim$ needs a bit more. By monotone convergence, it is enough to consider $\mathscr D(Q_0)$ in place of $\mathscr D$. Let $\mathscr S_0:=\{Q_0\}$. For some $A>1$ to be chosen and $Q\in\mathscr D(Q_0)$, let $\mathscr S'(Q)$ consist of all maximal $Q'\in\mathscr D(Q)$ such that $a_{Q'}>A a_Q$. By maximality, the cubes $Q'\in\mathscr S'(Q)$ are disjoint. By Lemma \ref{lem:convexStopping}, we have
\begin{equation*}
   \sum_{Q'\in\mathscr S'(Q)} \abs{Q'}\leq\frac{n^{\max(1,r)+r/2}}{A^r}\abs{Q}\leq(1-\delta)\abs{Q},\qquad\frac1r:=\frac1p+\frac1q,
\end{equation*}
provided that $A$ is chosen large enough, depending on $n,p,q$, and $\delta$. Hence, defining inductively $\mathscr S_{j+1}:=\bigcup_{Q\in\mathscr S_j}\mathscr S'(Q)$ and $\mathscr S:=\bigcup_{j=0}^\infty\mathscr S_j$, we find that $\mathscr S$ is $\delta$-sparse. If $Q\in\mathscr D(Q_0)$ and $S\in\mathscr S$ is the minimal stopping cube that contains $Q$, then $a_Q\leq Aa_S$ by the way that the cubes $S\in\mathscr S$ were chosen, hence
\begin{equation*}
  \sup_{Q\in\mathscr D(Q_0)}1_Q a_Q\leq  \sup_{S\in\mathscr S}1_S Aa_S\leq A\sum_{S\in\mathscr S}1_S a_S,
\end{equation*}
and thus
\begin{equation*}
  \BNorm{\sup_{Q\in\mathscr D(Q_0)}1_Q a_Q}{L^1(\R^d)}\leq A\BNorm{\sum_{S\in\mathscr S}1_S a_S}{L^1(\R^d)}
  =A\sum_{S\in\mathscr S}a_S\abs{S}.\qedhere
\end{equation*}
\end{proof}


\end{document}